\theoremstyle{plain}
\newtheorem{theorem}{Theorem}[section]
\newtheorem{lemma}[theorem]{Lemma}
\newtheorem{definition-theorem}[theorem]{Definition-Theorem}
\newtheorem{proposition}[theorem]{Proposition}
\newtheorem{definition}[theorem]{Definition}
\newtheorem{remark}[theorem]{Remark}
\theoremstyle{definition}
\newtheorem{example}[theorem]{Example}
\newcommand{\bbr}{\mathbb{R}}
\newcommand{\bba}{\mathbb{A}}
\newcommand{\bbc}{\mathbb{C}}
\newcommand{\bbz}{\mathbb{Z}}
\newcommand{\bbn}{\mathbb{N}}
\newcommand{\ord}{\mathrm{ord}}
\newcommand{\wt}{\widetilde}
\DeclareMathOperator{\Grad}{{\mathrm{Gr}^{\mathrm{ad}}}}
\newcommand{\vocab}[1]{\textit{#1}}
\DeclareMathOperator{\ad}{\text{ad}}
\newcommand{\ol}[1]{\overline{#1}}
\DeclareMathOperator \Span { {\mathrm{Span}} }
\renewcommand{\Im}{\mathrm{Im}}
\DeclareMathOperator{\LM}{\mathrm{LM}}
\DeclareMathOperator{\vspan}{\mathrm{Span}}
\DeclareMathOperator{\Proj}{\mathrm{Proj}}
\DeclareMathOperator{\spec}{\mathrm{Spec}}
\DeclareMathOperator{\Hom}{\mathrm{Hom}}
\DeclareMathOperator{\const}{\mathrm{const}}
\DeclareMathOperator{\Be}{\mathrm{Be}}
\newcommand{\ub}{h}
\newcommand{\diff}{\backslash}
\DeclareMathOperator{\shom}{\mathscr{H}\text{\kern -3pt {\calligra\large om}}\,}
\begin{document}
%%%%%%%%%%%%%%%%%%%%%%%%%%%%%%%%%%%%%%%%%%%%%%%%%%%%%%%%%%%%%%%%%%%%%%%%%%%
%%%%%%%%%%%%%%%%%%%%%%    Title    %%%%%%%%%%%%%%%%%%%%%%%%%%%%%%%%%%%%%%%%
\title[Reflective prolate-spheroidal operators and the adelic Grassmannian]
{Reflective prolate-spheroidal operators and \\
the adelic Grassmannian}
\author[W. Riley Casper]{W. Riley Casper}
\address{
Dept of Mathematics \\
Louisiana State University \\
Baton Rouge, LA 70803 \\
U.S.A.
}
\email{wcasper1@lsu.edu}

\author[F. Alberto Gr\"unbaum]{F. Alberto Gr\"unbaum}
%\thanks{}
\address{
Dept of Mathematics \\
University of California, Berkeley \\
Berkeley, CA 94720 \\
U.S.A.
}
\email{grunbaum@math.berkeley.edu}

\author[Milen Yakimov]{Milen Yakimov}
%\thanks{}
\address{
Dept of Mathematics \\
Louisiana State University \\
Baton Rouge, LA 70803 \\
U.S.A.
}
\email{yakimov@math.lsu.edu}

\author[Ignacio Zurri\'an]{Ignacio Zurri\'an}
%\thanks{}
\address{
Dept of Mathematics \\
Universidad Nacional de C\'ordoba, C\'ordoba, X5016HUA, Argentina
}
\email{zurrian@famaf.unc.edu.ar}
\date{\today} 
\begin{abstract}
Beginning with the work of Landau, Pollak and Slepian in the 1960s on time-band limiting, commuting pairs of integral and differential operators have played a key role in signal processing, random matrix theory, and integrable systems. Previously, such pairs were constructed by ad hoc methods, which 
essentially worked because a commuting operator of low order could be found by a direct calculation. 

We describe a general approach to these problems 
that proves that every point $W$ of Wilson's infinite dimensional adelic Grassmannian $\Grad$ gives rise to an integral operator $T_W$, acting on $L^2(\Gamma)$ for a contour 
$\Gamma \subset \bbc$, which reflects a differential operator with rational coefficients $R(z, \partial_z)$ in the sense that $R(-z,-\partial_z) \circ  T_W = T_W \circ R(w, \partial_w)$
on a dense subset of $L^2(\Gamma)$. By using analytic methods and methods from integrable systems, we show that the reflected differential operator can be constructed 
from the Fourier algebra of the associated bispectral function $\psi_W(x,z)$. The exact size of this algebra with respect to a bifiltration is in turn determined using 
algebro-geometric methods. Intrinsic properties of four involutions of the adelic Grassmannian naturally lead us to consider the reflecting property above in place of plain commutativity. 
Furthermore, we prove that the time-band limited operators of the generalized Laplace transforms with kernels given by the rank one bispectral functions $\psi_W(x,-z)$
always reflect a differential operator.  A $90^\circ$ rotation argument is used to prove that the time-band limited operators of the generalized Fourier transforms with kernels
$\psi_W(x, iz)$ admit a commuting differential operator. These methods produce vast collections of integral operators with prolate-spheroidal properties, 
associated to the wave functions of all rational solutions of the KP hierarchy vanishing at infinity, introduced by Krichever
in the late 1970s. 
\end{abstract}
\maketitle
%\tableofcontents
%%%%%%%%%%%%%%%%%%%%   Introduction   %%%%%%%%%%%%%%%%%%%%%%%%%%%%%%%%%%%%%%%%
\section{Background}
\subsection{The prolate-spheroidal phenomenon}
\label{1.1}
In his foundational work \cite{Shannon1} on communication theory, Claude Shannon posed the problem of finding
the best information that one can infer about a signal $f(t)$ which is supported in the interval $[-\tau,\tau]$ from the knowledge of its frequencies in the interval $[-\kappa,\kappa]$.
This requires determining the singular value decomposition of the finite Fourier transform
$$
(Ef)(z)=\int_{-\tau}^\tau e^{izx}f(x) dx,\quad {z\in[-\kappa,\kappa]},
$$
that is, the eigenfunctions of the integral operator
$$
(EE^*f)(z)=2\int_{-\kappa}^\kappa\frac{\sin \tau(z-w)}{z-w}f(w) dw, \; {z\in[-\kappa,\kappa]}.
$$
This is a numerically extremely unstable problem.  Landau, Pollak and Slepian  \cite{SP, LP1} bypassed this issue based on the property that
$$
R(z, \partial_z)= \partial_z (\kappa^2-z^2)\partial_z -\tau^2 z^2
$$ 
commutes with $E E^*$. The differential operator $R(z,\partial_z)$ is the ``radial part'' of the Laplacian in prolate-spheroidal coordinates, 
and (the joint) eigenfunctions of it and the integral operator $E E^*$ are known as the prolate-spheroidal functions, which can be computed numerically in very stable ways. 
%, while Jimbo et al. \cite{JMMS80} showed that 
%its Fredholm determinant is a $\tau$-function of Painlev\'e V.
The commuting pair can be traced back to Bateman \cite[Eqs. 38-41 plus differentiation]{bateman} and the classical text by Ince \cite[p. 201]{Ince}.
%It played a role in the asymptotic analysis of the eigenvalues of $E E^*$  \cite{Fuchs64,Slepian65}.
Mehta  \cite{Mehta} independently discovered and used it to analyze the Fredholm determinant of 
$E E^*$ in relation to asymptotic problems for random matrices.
An in-depth study of the numerical properties of prolate-spheroidal functions (the joint eigenfunctions of $E E^*$ and $R(z, \partial_z)$) 
was carried out in \cite{ORX}.
%For recent numerical work on prolate spheroidal operators see \cite{osipov}; for applications to geophysics see \cite{simons,Fuchs64,Slepian65,JMMS80}.

Slepian \cite{S} extended the time-band limiting analysis to $n$-dimensions by passing to polar coordinates and relying on a different commuting pair. 
He proved that the integral operator
\[
(\mathcal Ef)(z) = \int_0^1 J_N(czw)\sqrt{czw}f(w)dw
\]
acting on a subspace of $L^2(0, 1; dw)$ with appropriate boundary conditions commutes with 
\[
\partial_z (1-z^2)\partial_z - c^2z^2+\frac{1/4 - N^2}{z^2},
\]
where $J_N(x)$ denotes the Bessel function of the first kind.

Tracy and Widom \cite{TW2} constructed another remarkable commuting pair of integral and differential operators
associated to the Airy kernel
\[
\frac{A(z) A'(w) - A'(z) A(w)}{z-w}
\]
acting on $L^2(\tau, + \infty;dw)$, where $A(z)$ denotes the Airy function. They proved that 
this integral operator commutes with
\[
\partial_z (\tau-z) \partial_z - z(\tau-z).
\]
This commuting pair and a modification of the one for the Bessel kernel were used for the asymptotic analysis of the level spacing distribution functions 
of the edge scaling limits of the Gaussian Unitary Ensemble and the Laguerre and Jacobi Ensembles in \cite{TW1,TW2}.

Fourth and sixth order commuting differential operators were constructed in  \cite{Grunbaum1996} by using 
two 1-step Darboux transformations from the Bessel functions with parameters $N =3/2$, $5/2$.
Examples of differential operators commuting with the finite Laplace transform were also found in \cite{Bertero}.

All of the above commuting pairs (and others that have been found in the past) fit into one general scheme: 
commuting differential operators were constructed for an integral kernel of the form
\begin{equation}
\label{K-oper}
K_{\psi}(z,w) := \int_{\Gamma_1} \psi(x,z)\psi^\dag(x,w) dx
\end{equation}
acting on $L^2(\Gamma_2;dw)$, where $\Gamma_1$ and $\Gamma_2$ are contours in ${\mathbb{C}}$, $\psi(x,z)$ is a 
wave function for the KP hierarchy and $\psi^\dag(x,z)$ is its adjoint wave function defined below.
(For Slepian's Bessel-type example, we get a kernel of this form for the square of Slepian's operator $\mathcal{E}$ defined above.)
%%%%%%%
\subsection{The adelic Grassmannian}\mbox{}
\label{1.2}
The decades of the 1960s and 1970s witnessed a rich development in the area of ``integrable
systems with an infinite number of degrees of freedom". It was soon realized that the fact that
certain nonlinear differential equations could be explicitly solved was linked to several parts
of mathematics including algebraic geometry, representation theory of infinitely dimensional
groups, etc. By 1981 M. Sato and his school noticed that the solutions of the KP hierarchy can
be understood in terms of an infinite dimensional Grassmannian and that a central role is played
by a ``tau function" $\tau(t_1,t_2,\dots)$ of infinitely many variables. In the simplest case this is a character of a representation of $\text{GL}_N(\bbc)$, and more
generally it is an appropriate linear combination of these, with coefficients satisfying the Pl\"ucker quadratic conditions. These efforts were pushed further by several people,
and the reader can consult \cite{Date,Sato,SW,Wilson-93}.

Wilson's adelic Grassmannian $\Grad$, \cite{Wilson-93}, is 
the infinite dimensional Grassmannian consisting of all subspaces $W \subset \bbc(z)$ of the form
\begin{equation}
\label{W}
W = \frac{1}{q(z)} \{f(z)\in\bbc[z]: \langle \chi_j(z), f(z) \rangle  = 0, 1 \leq j \leq n \} \quad \mbox{for some} \quad n \geq 0,
\end{equation}
where $\chi_j(z)$ are distributions on $\bbc$ that are linear combinations of derivatives of delta functions 
such that the support of each $\chi_j(z)$ is a singleton (adelic condition). Here $q(z)$ is the monic polynomial of degree $n$ 
whose roots are the supports of $\chi_1(z), \ldots, \chi_n(z)$. Each $W \in \Grad$ gives rise to a {\em{wave function}} $\psi_W(x,z)$ of
the KP hierarchy \cite{SW,vanMoerbeke} of the form $\frac{h(x,z)}{p(x)q(z)}e^{xz}$
for some polynomials $p(x),q(z)$ and $h(x,z)$, see \S \ref{2.1}. 
%(We will use throughout the paper an opposite to the standard convention for the exponents in wave functions 
%for a better .)

In terms of the tau function $\tau_W(t_1,t_2,\dots)$ of $W\in\Grad$, 
the wave function and the adjoint wave function of $W$ have expressions given by the following modified version of Sato's formula and its dual \cite[Theorem 1.5]{vanMoerbeke}
\begin{equation}\label{Sato formula}
%\psi_W(x,z) = \frac{\tau_W(x-\frac{1}{z},\frac{-1}{2z^2},\dots,\frac{-1}{nz^n},\dots)}{\tau_W(x,0,0,\dots)}e^{xz},\ \ \ \psi_W^*(x,z) = \frac{\tau_W(x-\frac{1}{z},\frac{1}{2z^2},\dots,\frac{(-1)^n}{nz^n},\dots)}{\tau_W(x,0,0,\dots)}e^{xz}.
\psi_W(x,z) = \frac{\tau_W(\mathbf t-[1/z])}{\tau_W(\mathbf t)}e^{xz},\ \ \ \psi_W^\dag(x,z) = \frac{\tau_W(\mathbf t +[-1/z])}{\tau_W(\mathbf t)}e^{xz}.
\end{equation}
Here $\mathbf t = (x,t_2,t_3,\dots)$ for some parameters $t_i$ and $[z] := (z,z^2/2,z^3/3,\dots)$.
Furthermore, a modified version of the differential Fay identity \cite[Theorem 1.9]{vanMoerbeke},
\begin{equation}\label{Fay identity}
%\psi_W(x,z)\psi_W^*(x,w) = \frac{1}{z+w}\frac{\partial}{\partial x}\left[e^{x(z+w)}\frac{\tau_W(x-\frac{1}{z}-\frac{1}{w},\frac{-1}{2z^2}+\frac{1}{2w^2},\dots,\frac{-1}{nz^n}+\frac{(-1)^n}{nw^n},\dots)}{\tau_W(x,0,0,\dots)}\right].
\psi_W(x,z)\psi_W^\dag(x,w) = \frac{1}{z+w}\frac{\partial}{\partial x}\left[e^{x(z+w)}\frac{\tau_W(\mathbf t-[1/z] + [-1/w])}{\tau_W(\mathbf t)}\right]
\end{equation}
allows us to explicitly integrate our expression for the kernel \eqref{K-oper}, as in \eqref{tau formula for the kernel}.
We note that we do not rely on the $\tau$ function, Sato's formula, or the differential Fay identity for any of the results in the paper.  However Sato's formula and the differential Fay identity is utilized for \emph{examples} in Section 6.
\begin{remark}
The adjoint wave function $\psi^\dag_W(x,z)$ which we use throughout the paper is the wave function of the point of the adelic Grassmaannian obtained from $W$ by the action of a certain adjoint involution. It is closely related to the dual wave function of $W$ \cite[Theorem 1.1(f)]{vanMoerbeke}, see Remark \ref{dual vs adjoint}. The use of the adjoint wave function allows us to treat the two factors of \eqref{K-oper} on an equal footing, which plays a key role in all proofs. 
\end{remark}

The adelic Grassmannian $\Grad$ plays a ubiquitous role as the classifying space of objects of very different nature. 
Firstly, Wilson proved in \cite{Wilson-93} that $\Grad$ classifies all (normalized) rank $1$ bispectral functions. 
The bispectral problem posed by Duistermaat and Gr\"unbaum in \cite{DG86}, asks for a classification of all 
meromorphic functions $\psi(x,z)$ on a subdomain $\Omega_1 \times \Omega_2 \subseteq \bbc^2$ 
which are eigenfunctions in both variables, i.e.
\[
L(x,\partial_x)\cdot\psi(x,z) = g(z)\psi(x,z) \quad \; \mbox{and} \quad \; 
M(z,\partial_z)\cdot\psi(x,z) = h(x)\psi(x,z)
\]
for a pair of differential operators $L(x,\partial_x)$, $M(z,\partial_z)$ on $\Omega_1$, $\Omega_2$ and nonconstant meromorphic functions $h(x)$, $g(z)$
on $\Omega_1$, $\Omega_2$. The rank of such a {\em{bispectral function}} $\psi(x,z)$ is defined to be the greatest common divisor of the orders of 
the operators $L(x,\partial_x)$ for which $\psi(x,z)$ is an eigenfunction.  Note that this definition of rank is consistent with the one in \cite{Wilson-93}.

The following is a partial list of other situations in which $\Grad$ arises as a classifying space: 
\begin{enumerate}[(i)]
\item The space ${\mathrm{Gr}}^{\mathrm{ad}}$ parametrizes the set of rational solutions of the KP hierarchy vanishing 
as $x \to \infty$, introduced and studied by Krichever \cite{Krichever}. Wilson \cite{Wilson-98} constructed a decomposition 
of the adelic Grassmannian 
\begin{equation}
\label{CM}
{\mathrm{Gr}}^{\mathrm{ad}} = \bigsqcup_{k \geq 0}
{\mathcal{CM}}_k
\end{equation}
and proved that each stratum ${\mathcal{CM}}_k$ is isomorphic to a completion of the complexification of the real
phase space of the Calogero--Moser system on $k$ particles, constructed via Hamiltonian reduction by 
Kazhdan--Kostant--Sternberg \cite{KKS}.

\item Etingof and Ginzburg \cite{EG} proved that the Calogero--Moser stratum ${\mathcal{CM}}_k$ parametrizes the simple modules of the 
rational Cherednik algebra associated to the symmetric group $S_k$ when the deformation parameter $t=0$. 
\item Cunnings--Holland \cite{CH} and Berest--Wilson \cite{BW00} proved that $\Grad$ classifies the equivalence classes of one-sided ideals of the first Weyl algebra. 
\item Berest and Chalykh \cite{BC} proved that $\Grad$ is a classifying space for the isomorphism classes of strict $A_\infty$-modules over the 
first Weyl algebra.
\end{enumerate}
The adelic Grassmannian is considered to be a quantum analog of the Hilbert scheme points in $\bba^2$
$$\bigsqcup_{k \geq 0} {\mathrm{Hilb}}_k(\bba^2),$$
which also plays a key role as a universal classifying space \cite{Nakajima}.

The paper is organized as follows. In section \ref{results} we state our main results: Theorems A-E. Section \ref{sec:Wilson} contains background material 
on Wilson's adelic Grassmannian and algebro-geometric stuctures that we use in later sections. Theorem E is proved in Section \ref{sec:Thm E} by an algebro-geometric 
approach to the structure of Fourier algebras. In Section \ref{sec:Thm A-D} we prove Theorems A-D by using methods from integrable systems and noncommutative algebra. 
Finally, Section \ref{sect:examples} contains applications of our results to different examples.
%%%%%%%%%%%
\section{Statement of Results}\label{results}
\subsection{A general theorem on reflective integral operators}
\label{1.3}
The unifying feature of the diverse lines of research in \S \ref{1.1} is that commuting differential operators were constructed 
for the integral kernels given by \eqref{K-oper} for very concrete choices of bispectral functions $\psi(x,z)$. Previous results were based on direct 
computations done on a case-by-case basis. They did not offer insight into the nature of the commuting 
differential operator and essentially succeeded because a commuting operator of low order $(\leq 6)$ could be constructed for each concrete situation.
Consequently, the construction of commuting differential operators for the kernels associated to the 
points of the second Calogero--Moser stratum ${\mathcal{CM}}_2$ remained wide open despite the fact that those wave functions were 
prominently featured in many settings: they come from the simplest rational solutions of the KdV equation by Airault, McKean, and Moser \cite{AMM77}
and the simplest nontrivial bispectral functions of Duistermaat and Gr\"unbaum \cite{DG86}.

In this paper we give a general solution of these problems that is applicable to the integral operators associated to all points of $\Grad$.
It is based on a conceptual way of constructing the commuting differential operators from Fourier algebras associated to bispectral functions
and proving sharp bounds on their growth using algebro-geometric methods. 

One of our key ideas is that the intrinsic properties of the adelic Grassmannian naturally lead us to a more general property of its integral operators
than a plain commutativity:
\begin{definition}
An integral operator $I$, acting on $L^2(\Gamma)$ for a contour $\Gamma \subset {\mathbb{C}}$ and taking a function $f(w)$ to a new function $I(f)(z)$, 
is said to \vocab{reflect} a differential operator $R(z,\partial_z)$ with rational coefficients if 
$$R(-z,-\partial_z)\cdot I(f)(z) = I(R(w,\partial_w)\cdot f)(z)$$
on a dense subspace of $L^2(\Gamma)$.
\end{definition}

This condition can be restated in several equivalent forms. For instance, it is equivalent to requiring that $I \circ \sigma : L^2(-\Gamma) \to L^2(\Gamma)$ commutes with 
$R(z,\partial_z)$ on a dense  subspace of $L^2(-\Gamma)$ where $\sigma : L^2(-\Gamma) \to L^2(\Gamma)$ is the operator $\sigma ( f(x)) = f(-x)$. 

To state our results we need to introduce some background on bispectral functions and the Grassmannian $\Grad$. For a bispectral function $\psi(x,z)$, 
define the \vocab{left and right Fourier algebras} of $\psi$ respectively by
\begin{align}
\label{Fx}
\mathcal F_x(\psi) &:= \{S(x,\partial_x):\ \exists R(z,\partial_z)\ \text{such that}\ S(x,\partial_x)\cdot\psi(x,z) = R(z,\partial_z)\cdot\psi(x,z)\}, \\
\label{Fz}
\mathcal F_z(\psi) &:= \{R(z,\partial_z):\ \exists S(x,\partial_x)\ \text{such that}\ S(x,\partial_x)\cdot\psi(x,z) = R(z,\partial_z)\cdot\psi(x,z)\}.
\end{align}
If $\psi$ is nontrivial (i.e. $\psi(x,z) \neq \lambda(x) \mu(z)$ for some meromorphic functions
$\lambda(x)$ and $\mu(z)$ on $\Omega_1$ and $\Omega_2$), then there is a canonical antiisomorphism
\begin{equation}
\label{bpsi}
b_\psi : \mathcal{F}_x(\psi) \to \mathcal{F}_z(\psi)
\quad \mbox{defined by} \quad b_\psi(S(x,\partial_x)) := R(z,\partial_z),
\end{equation}
where $S(x,\partial_x)\cdot\psi(x,z) = R(z,\partial_z)\cdot\psi(x,z)$.
When $\psi(x,z)$ is appropriately normalized, the operators in $\mathcal{F}_x(\psi)$ and  $\mathcal{F}_z(\psi)$ have rational coefficients. 

Wilson \cite{Wilson-93} introduced three involutions $a,b,s$ of $\Grad$. The {\em{adjoint involution}} $a$ acts on a space $W \in \Grad$ by taking 
its orthogonal complement for a certain inner product. For brevity we write, with more details coming in Sect. \ref{2.1},
\[
\psi^\dag_W(x,z) = \psi_{a(W)}(x,z).
\]
\begin{remark}\label{dual vs adjoint}
The adjoint wave function $\psi^\dag_W(x,z)$ is related to the dual wave function $\psi^*_W(x,z)$ of \cite{Date} by $\psi^\dag_W(x,z) = \psi^*_W(x,-z)$.
\end{remark}

The {\em{bispectral involution}} $b$ interchanges the roles of $x$ and $z$ in a wave function $\psi_{b(W)}(x,z) = \psi_W(z,x)$. The {\em{sign involution}} $s$ is given by
$\psi_{s(W)}(x,z) = \psi_W(-x,-z)$. We will also need a fourth one, the {\em{Schwarz involution}} $c$, given by $\psi_{c(W)} (x,z)=\ol{\psi_W(\ol x,\ol z)}$. 

Define the {\em{degree}} of $W \in \Grad$ by setting $\deg W =n$ for the minimal $n$ for which $W$ can be represented in the form \eqref{W}. 
%Define the {\em{codegree}} of $W$ by $\codeg \, W = \deg \,  b(W)$. 
Set
\[
\ub(W) = 2 \left\lceil \frac{ \deg(W) + \deg(a W)}{2} \right\rceil \cdot
\]
\medskip

\noindent
{\bf{Theorem A.}} 
{\em{Consider $W \in \Grad$ and smooth contours $\Gamma_1, \Gamma_2 \subset \bbc$ starting from some $r,t \in \bbc$ and going to $+\infty$ in the strip $|\Im \, z|<  \const$, such that 
$\Gamma_1 \times \Gamma_2$ avoids the poles of $\psi_W(x,-z)$ and $\psi_W^\dag(x,-z)$. The integral operator $T_W$ on $L^2(\Gamma_2)$ with kernel 
$$
K_W (z,\xi) := \int_{\Gamma_1} \psi_W(x,-z)\psi_W^\dag(x,-\xi) dx
$$
reflects a non-constant differential operator $R_{r,t}(z,\partial_z) \in \mathcal F_z(\psi)$
of order $\leq \min(\ub(W), \ub(bW))$ with the properties that 
\begin{enumerate}
\item[{\em{(a)}}] $R_{r,t}(z,\partial_z)$ is independent of the choice of paths $\Gamma_1, \Gamma_2$ from $r$ and $t$, resp. and
\item[{\em{(b)}}] the coefficients of $R_{r,t}(z,\partial_z)$ are rational functions in $r,t$ and $z$.
\end{enumerate}
}}

The set of polynomial $\tau$-functions of the KP heirarchy lies inside $\Grad$.
The integral operators built from those $\tau$-functions will in general reflect a differential operator, but may not commute with a differential operator, as illustrated in Sect. \ref{sect:examples}.

The simplest example of Theorem A arises when $\psi_W(x,z) = \psi_W^\dag(x,z) = e^{xz}$.  In this case, the theorem tells us
that
$$T_W(f)(z) = \int_t^\infty \frac{1}{z+w}e^{-r(z+w)}f(w)dw\ \ \text{reflects}\ \ R(z,\partial_z) = (z-t)\partial_z-rz.$$
Details are in Sect. \ref{sect:examples}.
\begin{remark}
If $r$ is the unique finite endpoint of $\Gamma_1$, then $K_W(z,\xi)$ equals the specialization of
\begin{equation}\label{tau formula for the kernel}
%K_W(z,\xi) = \frac{1}{z+\xi}e^{-r(z+\xi)}\frac{\tau_W(r+\frac{1}{z}+\frac{1}{\xi},\frac{-1}{2z^2}+\frac{1}{2\xi^2},\dots,\frac{(-1)^{n+1}}{nz^n}+\frac{1}{n\xi^n},\dots)}{\tau_W(r,0,0,\dots)}.
K_{W}(z,\xi;\mathbf t) = \frac{1}{z+\xi}\left[e^{-r(z+\xi)}\frac{\tau_W(\mathbf t-[-1/z] + [1/\xi])}{\tau_W(\mathbf t)}\right]
\end{equation}
at $t_i=0$ for all $i>1$ where here $\mathbf t = (r,t_2,t_3,\dots)$ and $\tau_W(t_1,t_2,\dots)$ is the $\tau$-function of the point $W\in\Grad$.
Note that we specialize in order to associate each point $W\in\Grad$ to a \emph{unique} kernel $K_W(z,\xi;(r,0,\dots,0))$.
The family of kernels $K_{W}(z,\xi;\mathbf t)$ for more general values of $\mathbf t$ define the evolution of the kernel under the KP flows.
We will not use this explicit expression for the kernel until the examples in Section 6.
\end{remark}
%
%For any $W \in \Grad$ and $r,t \in \bbr$, the integral operator on $L^2[t, \infty)$ with kernel 
%$$
%K_W (z,w) := \int_r^{\infty} \psi_W(y,-z)\psi_W^*(y,-w) dy
%$$
%reflects a non-constant differential operator $R(z,\partial_z)\in\mathcal F_z(\psi)$
%of order $\leq 2 \min(\deg W, \codeg W)$.
%%%%%%%%%%%
\subsection{The Fourier and Laplace pictures}
\label{1.4}
Returning to the setting of time-band limiting, 
for $W \in \Grad$ we can relate the integral operators from Theorem A and those of the form $EE^*$ 
by considering the following generalized Laplace transform and its adjoint with respect to the standard inner product:
\[
L_W : f(x)\mapsto \int_0^\infty \psi_W(x,-z)f(x)dx, \quad L^*_W: f(z)\mapsto \int_0^\infty \ol{\psi_W(x,-z)}f(z)dz.
\]
(In the special case $W=\bbc[z]$, i.e., $\psi_W(x,z) = e^{xz}$,
the operator $L_W$ is precisely the Laplace transform.)
The \vocab{time and band-limited} versions of these are 
$$({\mathcal{E}}_W f)(z)   = (\chi_{[t,\infty)}L_W \chi_{[r,\infty)} f)(z)  = \int_r^\infty \psi_W(x,-z) f(x) dx$$
for $z\geq t$, and 
$$({\mathcal{E}}^*_W f)(x) = (\chi_{[r,\infty)}L^*_W \chi_{[t,\infty)} f)(x) = \int_t^\infty \ol{\psi_W(x,-z)}f(z)dz$$
for $x\geq r$, respectively.
The singular values of ${\mathcal{E}}_W$ are the eigenvalues of the integral operator on $L^2(t, \infty)$
\begin{equation}\label{tblimit laplace equation}
({\mathcal{E}}_W {\mathcal{E}}^*_W f)(z) = \int_t^\infty K_W (z,\xi)f(\xi)d\xi, \quad \mbox{where} \quad
K_W (z,\xi) = \int_r^\infty \psi_W(x,-z)\ol{\psi_W(x,-\xi)}dx.
\end{equation}
Applying Theorem A to this situation, leads us to the following result.
\medskip

\noindent
{\bf{Theorem B.}} {\em{Let $W \in \Grad$ be fixed under the involution $ac$, 
and $r, t \in \bbc$ be such that $[r,+ \infty) \times [t, + \infty)$ avoids the poles of $\psi_W(x,-z)$. Then 
the integral operator ${\mathcal{E}}_W {\mathcal{E}}^*_W$ reflects the non-constant 
differential operator $R_{r,t}(z,\partial_z)\in\mathcal F_z(\psi)$ from Theorem A.
%For every $W \in \Grad$, fixed under the involution $ac$, 
%the integral operator ${\mathcal{E}}_W {\mathcal{E}}^*_W$ reflects a non-constant 
%differential operator $R(z,\partial_z)\in\mathcal F_z(\psi)$ 
%of order $\leq 2 \min(\deg W, \codeg W)$.
}}
\medskip

By performing a \vocab{90 degree rotation} in the complex variable $z$, we move from the Laplace transform picture to the Fourier transform picture.
We prove in Sect. \ref{sec: moving to commutative} that in this way one can convert the reflected differential operators from Theorem A to {\em{commuting pairs}} in the Fourier picture.
For $W \in \Grad$, replace $L_W$ and $L_W^*$ with the operators on $L^2(\bbr)$
\[
F_W : f(x)\mapsto \int_{-\infty}^\infty \psi_W(x,iz)f(x)dx, \quad
F_W^*: f(z)\mapsto \int_{-\infty}^\infty \ol{\psi_W(x,iz)}f(z)dz.
\]
(In the special case $W=\bbc[z]$, $F_W$ is the Fourier transform.)
Consider the time and band-limited operators 
\begin{align*}
&(E_W f)(z)  = (\chi_{[t,\infty)} F_W \chi_{[r,\infty)} f)(z)  = \int_r^\infty \psi_W(x,iz) f(x) dx, \\
&(E^*_W f)(x) = (\chi_{[r,\infty)}F^*_W \chi_{[t,\infty)} f)(x) = \int_t^\infty \ol{\psi_W(x,iz)}f(z)dz
\end{align*}
and the self-adjoint operator on $L^2(t, \infty)$
\begin{equation}\label{tblimit fourier equation}
(E_W E^*_W f)(z) = \int_r^\infty \int_t^\infty \psi_W(x,i z)\ol{\psi_W(x,i\xi)}f(\xi)d \xi dx.
\end{equation}

\noindent
{\bf{Theorem C.}} {\em{
Let $W\in\Grad$ and let $R_{r,t}(z,\partial_z)$ be the operator from Theorem A.
\begin{enumerate}[(i)]
\item
Let $\Gamma_1$ and $\Gamma_2$ be two smooth contours starting from $r,t\in\bbc$ respectively and going to $+\infty$ in a strip with bounded real component, such that $\Gamma_1\times\Gamma_2$ avoids the poles of $\psi_W(x,iz)$ and $\psi_W^\dag(x, -i\xi)$.
Then the integral operator on $L^2(\Gamma_2)$ defined by
$$U_W(f)(z) = \int_{\Gamma_1}\int_{\Gamma_2} \psi_W(x,iz)\psi_W^\dag(x,-i\xi)f(\xi) d\xi dx$$
commutes with the 90 degree rotation $\wt R_{r,t}(z,\partial_z)$ of the differential operator $R_{t,z}(z,\partial_z)$, defined by
\begin{equation}\label{90 degree rotation}
\wt R_{r,t}(z,\partial_z) = R_{r,it}(iz,-i\partial_z).
\end{equation}
\item
Assume additionally that $W$ is fixed under the involution $ac$ and let 
and $r, t \in \bbr$ be such that $[r,+ \infty) \times [t, + \infty)$ avoids the poles of $\psi_W(x,iz)$. 
Then the integral operator $E_W E_W^*$ commutes with the nonconstant differential operator $R_{r,it}(iz, -i \partial_z)$.
\end{enumerate}
}}
\medskip

For analytic reasons, one cannot deduce Theorem C from Theorem B by an elementary change of variables.
%%%%%%%%%%%
\subsection{Simultaneous reflectivity}
\label{1.5}
The proofs of Theorems A-C produce not just a single reflected/commuting operator, but a large algebra of such operators 
(inside the Fourier algebra  ${\mathcal{F}}_z(\psi_W)$). We use this to prove the existence of \vocab{universal operators} 
which are simultaneously reflected by  (or commute with) finite collections of integral operators.
\medskip

\noindent
{\bf{Theorem D.}} (i) {\em{Consider any finite collection $W_1, \ldots, W_N \in \Grad$ and let $T_1, \ldots, T_N$ be the associated integral operators as 
in Theorem A for the same values of $r$ and $t$. There exists a non-constant differential operator in $\bigcap_k\mathcal F_z(\psi_{W_k})$ 
simultaneously reflected by each of the operators $T_k$.}}

(ii) {\em{If, in addition, $W_1, \ldots W_N \in \Grad$ are fixed under the involution $ac$ as in Theorems B and C, then there exists 
a differential operator $R_{r,t}^{\text{univ}} (z, \partial_z)$ which is simultaneously reflected by all integral operators ${\mathcal{E}}_{W_j} {\mathcal{E}}_{W_k}^*$
for $1 \leq j, k \leq N$. This differential operator has rational coefficients in $z,r,t$ and $\wt R_{r,t}(z,\partial_z) := R_{r, it}^{\text{univ}}(iz,-i \partial_z)$ commutes with all
integral operators $E_{W_j} E_{W_k}^*$ for $ 1 \leq j, k \leq N$.}}
\medskip

The integrals considered by Landau, Pollak, Slepian and Mehta \cite{Mehta,S,SP} involve finite intervals, while those considered by Tracy and Widom \cite{TW1,TW2} involve semi-infinite intervals.
Our situation is similar to the latter.
Commutativity results for finite intervals concerning subfamilies of wave functions from the adelic grassmannian were obtained in \cite{CY18,Grunbaum1996}.
Such results do not extend in the generality considered in this paper.

Since the Fourier algebra of $\exp(xz)$ is the first Weyl algebra, we can force all coefficients of 
the differential operators in both parts of the theorem to be polynomials in $z$.

Theorems A--D open up avenues of broad new applications of integrable systems to the spectral analysis and numerical properties of integral operators, going
far beyond sinc, Bessel and Airy kernels. We will return to this in future publications.

\begin{remark}
All commutativity and reflect commutativity identities in this paper are proved on dense subspaces of Hilbert spaces. We do not
consider the finer analytic problem of determining maximal subspaces on which these commutativity properties hold. Even in the situation of
the sinc kernel this has only recently been addressed in \cite{K}.
\end{remark}
%%%%%%%
\subsection{The size of bifiltrations of Fourier algebras} 
\label{1.6}
A key ingredient in the proofs of Theorems A-D are the following sharp estimates of the growth rate 
of Fourier algebras, which are of independent interest.
The left Fourier algebra $\mathcal F_x(\psi)$ of a bispectral function $\psi(x,z)$ has a natural $\bbn \times \bbn$-filtration
\begin{equation}
\label{Fx-filt}
\mathcal F_x^{\ell,m}(\psi) := \{L(x,\partial_x)\in\mathcal F_x(\psi):\ord \, L(x,\partial_x)  \leq \ell,\ \ord \, b_\psi(L(z,\partial_z)) \leq m\}, \quad
\ell, m \in \bbn
\end{equation}
and similarly does the right one $\mathcal F_z(\psi)$, see \S \ref{3.1}. (Here and below $\bbn = \{0, 1, \ldots \}$.)
We obtain an exact formula for their sizes:
\medskip

\noindent
{\bf{Theorem E.}} {\em{Let $W \in \mathcal{CM}_k$ and recall \eqref{CM}.}}

(i) {\em{For $m\geq \deg(W) + \deg(aW)-1$ and $\ell\geq 0$, as well as for $m\geq 0$ and $\ell\geq \deg(bW) + \deg(abW)-1$, we have
$$\dim\mathcal F^{\ell,m}_z(\psi_W) \geq (\ell+1)(m+1)- 2 k,$$
with equality when both $m\geq \deg(W) + \deg(aW)$ and $\ell\geq \deg(bW) + \deg(abW)$.}}

(ii) $k \leq \min \big( \deg(W)\deg(a W), \deg (bW) \deg (ab(W) \big)$. 
%%%%%%%
\subsection{Strategy of the proofs}
We prove that, for $W \in \Grad$, the left Fourier algebra $\mathcal F_z(\psi_W)$ has two more incarnations
(see Theorem \ref{ident} and Remark \ref{rmk-incarnations}).
It is isomorphic to
\begin{enumerate}
\item[(a)] the algebra of global differential operators on a line bundle of the affine spectral curve of the solution of the KP hierarchy corresponding 
to $W \in \Grad$;
\item[(b)] the subalgebra of the ``additional symmetries" of the KP hierarchy see Remark \ref{rmk-incarnations} that fix the point $W \in \Grad$. 
\end{enumerate}

Theorem E is proved by using algebro-geometric methods and the incarnation (a) of $\mathcal F_z(\psi)$. 
Theorems A-D are then deduced from it by using a combination of analytic methods and methods from integrable systems. 
The joint domain of commutativity/reflectivity of integral and differential operators is the Sobolev space 
$W^{\ell,2}$. (Note that if $f\in W^{n+1,2}$, then its first $n$ derivatives must vanish at infinity.)
By this, we mean that the integral and differential operators commute on a dense subset of $W^{\ell,2}$.
The intrinsic reason for the appearance of reflectivity, as opposed to plain commutativity, is
that the product $ab$ of the adjoint and bispectral involutions of $\Grad$ 
is not an involution of $\Grad$. It is a fourth order automorphism of $\Grad$ such that that $(ab)^2 = s$. 
This key property was used by Wilson \cite{Wilson-93} in 1993; here we observe that it is at the heart of the prolate-spheroidal phenomenon of Landau, Pollak and Slepian. 

The $W$-constraints of a tau function of the KP hierarchy are precisely the elements of the algebra in the incarnation (b) above. 
In this way Theorem E determines the exact size of the $W$-constraints of the tau function of every point of the adelic Grassmannian. 
The $W$-constraints of tau functions of the 
Toda and multi-component KP hierarchies play a major role in many settings: the string equation for the Airy tau function arising 
as a partition function in quantum gravity and the generating function for the intersection theory on moduli spaces of curves \cite{AvM92,Kon,Wit}, 
the $W$-constraints of Gromov--Witten invariants \cite{OP} and the total descendant potential of a simple singularity \cite{BM}, and
in deriving a system of partial differential equations for the distributions of the spectra of coupled random matrices \cite{AvM99}. 

In \cite{Iliev} Iliev constructed quantum versions of the Calogero--Moser spaces. In a forthcoming publication we will prove that they give rise to difference operators 
that satisfy appropriate versions of Theorems A-D.

Some of the results in this paper were announced in \cite{CGYZ}. 

\section{Wilson's adelic Grassmannian and algebro-geometric structures}\label{sec:Wilson}
This section contains background material on Wilson's adelic Grassmannian, related algebro-geometric stuctures and  
bispectrality that will be used in later sections.
\subsection{Wilson's adelic Grassmannian}
\label{2.1}
Let $a\in\bbc$ and let $\mathscr C_a$ denote the $\bbc$-linear span of all discrete distributions on $\bbc$ of the form $\delta^{(k)}(z-a)$ for $k\geq 0$ 
an integer, and define $\mathscr C := \bigoplus_{a\in\bbc} \mathscr C_a$.
An element $\chi(z)\in\mathscr C$ is called \vocab{homogeneous} if $\chi(z)\in \mathscr C_a$ for some $a\in\bbc$.
A subspace $C\subseteq\mathscr C$ is called \vocab{homogeneous} if it has a basis consisting of homogeneous elements, or equivalently if 
$C = \bigoplus_{a\in\bbc} C\cap \mathscr C_a$.
For any $C\in\mathscr C$ we define a subspace $V_C\subseteq\bbc[z]$ by
$$V_C := \{f(z)\in\bbc[z]: \langle \chi(z), f(z) \rangle  = 0\ \forall \chi\in C\}.$$
The \vocab{adelic Grassmannian} is the set
$$\Grad := \Big\lbrace q(z)^{-1} V_C\subseteq \bbc(z) : C \subset \mathscr C \; \; \mbox{homog.},\ \dim(C) <\infty,\ q(z) = \prod_{a\in\bbc}(z-a)^{\dim(C\cap \mathscr C_a)}\Big\rbrace.$$
Note that the subspace  $C \subset \mathscr C$ is not uniquely determined by the plane $W = q(z)^{-1} V_C$. 
%For $W\in\Grad$ the \vocab{degree of $W$} is $\dim(C)$ with $W = \frac{1}{q(z)}V_C$.
%The four involutions in the introduction $a,b,s,c$ of $\Grad$ are denoted the same way here.
The sign and adjoint involutions of $\Grad$ are given by
\begin{equation}
\label{aW}
s(W) := \{f(-z): f(z)\in W\}, \quad a(W) := \{f(z) : \oint_{|z|=1} f(z)g(-z) \frac{dz}{2\pi i} = 0\ \forall g(z)\in W \},
\end{equation}
see \cite[Sect. 8]{Wilson-93}. We refer the reader to \cite{SW} and \cite{vanMoerbeke} for background on the KP hierarchy in the frameworks
of the Segal--Wilson and Sato Grassmannians, respectively. (The Wilson adelic Grassmannian can be treated in either framework.)
The wave function of the KP hierarchy \cite{Wilson-98} of a plane $W = q(z)^{-1} V_C \in \Grad$ is given by 
\begin{equation}
\label{psiW}
\psi_W(x,z) := \frac{1}{p(x)q(z)}P(x,\partial_x)\cdot e^{xz} = \frac{1}{p(x)q(z)}Q(z,\partial_z)\cdot e^{xz}
\end{equation}
with $p(x) \in\bbc[x]$, $P(x,\partial_x) \in \bbc[x,\partial_x]$ and $Q(z,\partial_z) \in \bbc[z,\partial_z]$
such that $p(x)^{-1} P(x, \partial_x)$ is the unique monic differential operator of order $\dim C$ with kernel
$\{ \langle \chi(z), e^{xz} \rangle : \chi(z) \in C \}$. The differential operator $Q(z, \partial_z)$ is uniquely 
determined from the second equality in \eqref{psiW}. 
The above differential operators and polynomials satisfy 
\begin{equation}
\label{psiW2}
\wt P(x,\partial_x)\frac{1}{\wt p(x)p(x)}P(x,\partial_x) = q(\partial_z)\wt q(\partial_z)\ \ \text{and}\ \ \wt Q(z,\partial_z)\frac{1}{\wt q(z)q(z)}Q(z,\partial_z) = p(\partial_x)\wt p(\partial_x).
\end{equation}
for some 
$\wt{p}(x) \in\bbc[x]$, $\wt{q}(z) \in\bbc[z]$, $\wt{P}(x,\partial_x) \in \bbc[x,\partial_x]$ and $\wt{Q}(z,\partial_z) \in \bbc[z,\partial_z]$.
Wilson \cite{Wilson-93} proved that the meromorphic functions $\psi_W(x,z)$ on $\bbc^2$ (for $W \in \Grad$) 
exhaust all normalized bispectral functions of rank 1. 

Each function $\psi(x,z)$ with the above properties \eqref{psiW}-\eqref{psiW2} arises as the wave function of some $W_\psi \in \Grad$, \cite{BHY97}, 
which is reconstructed from $\psi(x,z)$ as follows. 
We have $\psi(x,z) = e^{xz}\frac{h(x,z)}{p(x)q(z)}$ for some polynomial $h(x,z)$. Define a finite dimensional homogeneous subspace 
of $\mathscr C$ by 
\begin{equation}
\label{Cpsi}
C := \{\chi\in\mathscr C: \langle \chi (z), e^{xz}h(x,z) \rangle = 0\}.
\end{equation}
Then $W_\psi = \frac{1}{q(z)}W_{C}$. Furthermore, 
\begin{equation}
\label{Wpsi}
W_\psi = \vspan_{\bbc}\left\lbrace \partial_x^n(p(x)\psi(x,z))|_{x=0}\right\rbrace
= q(z)^{-1} \big( \bbc[z]\cdot P(x, \partial_x) \big),
\end{equation}
where $\bbc[z]$ is endowed with the right $\bbc[x,\partial_x]$-module 
structure by identifying $z^k$ with the equivalence class $[\partial_x^k]$ in $\bbc[x,\partial_x]/(x\bbc[x,\partial_x])$.
Eq. \eqref{Wpsi} implies that the Schwarz involution of $\Grad$ from the introduction is given by
\[
c(W) := \{\ol{f(\ol{z})}: f(z)\in W\}.
\]

The action of the sign, bispectral and Schwarz involutions on wave functions was described in \S \ref{1.3}.
The adjoint involution acts on wave functions by
\[
\psi_{a(W)}(x,z) = \frac{1}{\wt{p}(x)\wt{q}(z)}\wt{P}(x,\partial_x)^\dag \cdot \psi_{\exp}(x,z)
\]
in the notation \eqref{psiW}-\eqref{psiW2} (see \cite[Prop. 7.3]{Wilson-93} and \cite[Prop. 1.7]{BHY97}), where 
$(\cdot)^\dag$ denotes the formal adjoint of a differential operator, constructed without taking complex conjugates of its coefficients
\begin{equation}\label{formal adjoint formula}
\left(\sum_{j=0}^n a_j(x)\partial_x^j\right)^\dag = \sum_{j=0}^n (-1)^j\partial_x^ja_j(x).
\end{equation}
\begin{remark}
Note that this differs from the adjoint operator 
\begin{equation}\label{adjoint formula}
\left(\sum_{j=0}^n a_j(x)\partial_x^j\right)^* = \sum_{j=0}^n (-1)^j\partial_x^j \overline{a_j(x)}
\end{equation}
associated to the standard hermitian inner product. We only use the latter in Sect. \ref{sect:examples}.
\end{remark}

It follows from
these expressions and from \eqref{Cpsi} that
\begin{equation}
\label{equalities}
\deg (W) \leq \ord P, \; \; \deg(a (W)) \leq \ord \wt{P}, \; \; 
\deg( b (W)) \leq \ord Q, \; \; \deg (ab (W)) \leq \ord \wt{Q}.
\end{equation}
It is easy to show that, for every $W \in \Grad$, there exists a presentation of $\psi_W$ of the form 
\eqref{psiW}--\eqref{psiW2} such that all of the above inequalities become equalities.

The base point of the adelic Grassmannian is $W_0 = \bbc[z]$. It is fixed by the involutions $a,b,s,c$ and ${\mathcal{CM}}_0 = \{W_0 \}$. 
The wave function of $W_0$ is the exponential function
\[
\psi_{\exp}(x,z) = e^{xz}.
\]

\begin{example}\label{ex2.1}
Let $n \in \bbn$ and consider the bispectral meromorphic function
\[
\psi_n(x,z) := e^{xz}\sum_{j=0}^n\frac{n!}{j!}\frac{(-1)^{n-j}}{(xz)^{n-j}} \cdot 
\]
We have 
\[
\psi_{n+1}(x,z) = z^{-1} x^{-1} \left(x\partial_x - 1 \right)\psi_n(x,z),
\]
and thus, 
\[
\psi_n(x,z) = x^{-n} z^{-n} P(x, \partial_x) \psi_{\exp}(x,z) \quad \mbox{for} \quad P(x, \partial_x) =  (x\partial_x -1)(x\partial_x-2)\dots(x\partial_x-n).
\]
We determine $W_{\psi_n}$ using \eqref{Cpsi}: 
\begin{align*}
C
  & = \{\chi\in\mathscr C: \langle \chi(z), e^{xz}(xz-1) \rangle = 0\}\\
  & = \{\chi\in\mathscr C: \langle \chi(z), e^{xz} \rangle\in\ker P(x, \partial_x)\} = \vspan_\bbc\{\delta'(z),\delta''(z),\dots,\delta^{(n)}(z)\}.
\end{align*}
Therefore,
$$W_{\psi_n} = W_C = z^{-n}\{p(z)\in\bbc[z]: p^{(j)}(0) = 0,\ j=1,\dots,n\} = \vspan_\bbc\{1/z^n,z,z^2,z^3,\dots\}.$$
Alternatively, $W_{\psi_n}$ can be determined using the formulas in \eqref{Wpsi}. We leave the details to the reader.  
For brevity, set $W_n := W_{\psi_n}$. 
\qed
\end{example}

\begin{example}
\label{ex2.2}
Consider the Bessel bispectral functions defined for $\nu\in\bbc$ by
\begin{equation}\label{bispectral bessel}
\psi_{\Be(\nu)} (x,z) := (2 xz/\pi)^{1/2} K_{\nu +1} (xz)
\end{equation}
where $K_\nu(z)$ denote the modified Bessel functions.
%see \cite[\S 3.1]{MOS}. 
They satisfy
\[
L_\nu(x, \partial_x) \psi_{\Be(\nu)} (x,z)  = z^2 \psi_{\Be(\nu)} (x,z), \quad 
x^2 \psi_{\Be(\nu)} (x,z) = L_\nu(z, \partial_z) \psi_{\Be(\nu)} (x,z)
\]
with respect to the Bessel differential operators
\[
L_\nu(x, \partial_x) := \partial_x^2 - \frac{\nu (\nu+1)}{x^2} \cdot
\]
The Bessel bispectral functions are Darboux transformations from each other \cite{DG86,BHY97} as follows:
\begin{align*}
\psi_{\Be(\nu + 1)}(x,z) &= \frac{1}{z} \left( \partial_x - \frac{\nu +1}{x} \right) \psi_{\Be(\nu)}(x,z), \\
\psi_{\Be(\nu-1)}(x,z) &= \frac{1}{z} \left( \partial_x +\frac{\nu}{x} \right) \psi_{\Be(\nu)}(x,z). 
\end{align*}
The function $\psi_{\Be(\nu)} (x,z)$ is invariant under the transformation $\nu \mapsto - \nu -1$. It has rank 1 for $\nu \in \bbz$ and rank 2 otherwise. 
For $n \in \bbn$, $\psi_{\Be(n)}(x,z) = x^{-n} z^{-n} P(x, \partial_x) \psi_{\exp}(x,z)$, where 
\[
P(x, \partial_x) = g( x \partial_x) \quad \mbox{and} \quad
g(x) = (x -2n+1)(x -2n+3)\dots(x-1).
\]
For $m>0$
$$z^m\cdot P(x, \partial_x) = [\partial_x^mg(x\partial_x)] = [g(x\partial_x+m)\partial_x^m] = (m-1)(m-3)\dots(m-2n+1)[z^m],$$
which is zero for $m=1,3,\dots,2n+1$ and a scalar multiple of $z^m$ otherwise. We determine $W_{\psi_{\Be(n)}}$ using the second fomula 
in \eqref{Wpsi}:  
$$W_{\psi_{\Be(n)}} = \frac{1}{z^n}\{f(z)\in\bbc[z]: [f(\partial_z)]\in \bbc[z]\cdot P(x, \partial_x)\} = \vspan_\bbc\{z^{j-n}: j\neq 1,3,\dots,2n-1\}.$$
Note that $W_{\psi_{\Be(n)}} = W_C$ for $C = \vspan_{\bbc}\{\delta^{(2j-1)}(z)\}_{j=1}^n$. 
Denote $W_{\Be(n)} := W_{\psi_{\Be(n)}}$. 
\qed
\end{example}

\subsection{$\Grad$ and line bundles on framed curves}
\label{2.2}
In this subsection we review background material of algebro-geometric nature which will play a major role in our proof of 
Theorem E in the next section.

Following Wilson \cite{Wilson-93}, we associate to each element $W \in \Grad$, a framed affine curve $X$, a line bundle $\mathcal L$ on $X$, and a generic local trivialization 
on $\mathcal L$. 
%In this subsection, we desribe how a point of Wilson's adelic grassmannian defines a line bundle and generic local trivialization $(\mathcal L,\varphi)$ on a framed affine curve $X$.
%Here by \vocab{line bundle}, we mean a torsion-free sheaf which is generically rank $1$.
%We first recall the definition of a framed affine curve.
\begin{definition}[\cite{Wilson-93}] A {\em{framed affine curve}} is a pair $(X,\iota)$ consisting of an affine curve $X$ 
and a bijective map of schemes $\iota: \bba^1_\bbc\rightarrow X$.
\end{definition}

We will use the following slightly less common terminology. Following Segal--Wilson \cite{SW}, by a {\em{line bundle}} $\mathcal L$ on an irreducible affine curve $X$ we mean 
a sheaf $\mathcal L$ on $X$ which is torsion free and generically rank $1$.

Fix $W\in\Grad$ and consider the algebra
$$A_W := \{f(z)\in\bbc((z^{-1})): f(z)W\subseteq W\}.$$
One may easily verify from the definition of $\Grad$ that if $f(z)$ is a Laurent series in $z^{-1}$ and $f(z)W\subseteq W$ then $f(z)\in\bbc[z]$.
Thus $A_W\subseteq\bbc[z]$, inducing a natural morphism $\iota: \bba^1_\bbc\rightarrow  X = \spec(A_W)$.
Moreover, it is clear from the definition that if $C\in\mathscr C$ and $W = \frac{1}{q(z)}V_C$ then $q(z)^\ell\bbc[z]\subseteq A_W$ for some integer $\ell\gg0$.
Therefore the fraction field of $A_W$ is precisely $\bbc(z)$, so the morphism $\iota: \bba^1\rightarrow X$ is dominant.
By a result of Wilson \cite{Wilson-98}, the map $\iota$ is a bijection on points.

The vector space $W$ is an $A_W$-module which is torsion-free by virtue of the fact that $\bbc(z)$ is a domain.
Additionally, $W$ is finitely generated as a module by a degree argument combined with the above observation that $q(z)^\ell \bbc[z]\subseteq A_W$.
The inclusion $W\subseteq\bbc(z)$ induces an isomorphism $W\otimes_{A_W} K(A_W) = W\bbc(z) = \bbc(z)$, in particular making $W$ generically free of rank $1$.
Thus the usual functor from $A_W$-modules to sheaves on $X$ takes $W$ to a sheaf $\mathcal L = \wt W$ on $X$ which is torsion free and generically rank $1$.
The inclusion $W\subseteq\bbc(z)$ corresponds to a trivialization $\varphi: \mathcal L_\eta\rightarrow\mathcal O_{X,\eta}$, where here $\eta$ is the generic point of $X$.
To summarize, each point $W$ corresponds to
\begin{enumerate}
\item a framed affine curve $(X,\iota)$, 
\item a line bundle $\mathcal L$ on $X$, and
\item a local trivialization $\varphi: \mathcal L_\eta \rightarrow \mathcal O_{X,\eta}$.
\end{enumerate}

The degree valuation $\nu(f(z)) = -\deg(f(z))$ on $\bbc(z)$ defines a valuation $\nu_W$ on the fraction field $K(A_W)$ of $A_W$.
From the point of view of an abstract algebraic curve, this is a ``point at infinity" which can be used to complete $X$ to a projective curve $Y$.
Formally, this operation is completed by taking $Y$ to be $\Proj$ of the Rees ring defined by the filtration of $A_W$ induced by $\nu$.
Then $\spec(A_W)$ is isomorphic to $Y$ minus a point, denoted by $\infty$, and the valuation corresponds precisely to the divisor of $Y$ defined by $\infty$.

By \cite[Prop. 5.2]{BW04}, the map $W \mapsto (X, \mathcal{L})$ is a bijection from $\Grad$ to the set of isomorphism classes of 
pairs of a framed affine curve and line bundle on it.

\begin{example}
\label{ex2.4}
Let $n \in \bbn$. Consider the point $W_n= \vspan_\bbc\{1/z^n,z,z^2,z^3,\dots\} \in \Grad$ from Example \ref{ex2.1}.
The corresponding algebra $A_{W_n}$ is given by
$$A_{W_n} = \vspan_\bbc\{1,z^{n+1},z^{n+2},\dots\}.$$
Note in particular that $W_n = z^{-n}A_{W_n}$ so that $\mathcal L \cong \mathcal O_{X_n}$ for $X_n = \spec(A_{W_n})$.
The curve $X_n$ is one of the special extremal rational, unicursal curves considered in \cite{BW04,kouakou}.
In particular $X_n$ is the simplest unicursal curve having differential genus $n$, as described below.
\qed
\end{example}

\begin{example}
\label{ex2.5}
For $n \in \bbn$ consider the point 
$$W_{\Be(n)}= \frac{1}{z^n}\vspan_\bbc\{1,z^2,z^4,\dots, z^{2n},z^{2n+1},z^{2n+2},\dots\} \in \Grad$$
from Example \ref{ex2.2}. The corresponding algebra $A_{W_{\Be(n)}}$ is given by
$$A_{W_{\Be(n)}}=  \vspan_\bbc\{1,z^2,z^4,\dots, z^{2n},z^{2n+1},z^{2n+2},\dots\}.$$
Note again that $W_{\Be(n)} = z^{-n}A_{W_{\Be(n)}}$ so that $\mathcal L \cong \mathcal O_{X_{\Be(n)}}$ for $X_{\Be(n)} = \spec(A_{W_{\Be(n)}})$.
\qed
\end{example}

\subsection{Differential operators on line bundles}
Let $R$ be a commutative $\bbc$-algebra and let $M,N$ be $R$-modules. The set of $N$-valued differential operators on $M$ of order at most $m$ is the vector space
$$\mathcal{D}^m(M,N) := \{\varphi\in \Hom_\bbc(M,N):  \ad_r^{m+1}(\varphi) = 0\ \forall r\in R\}.$$
We write $\mathcal{D}(M,N) := \bigoplus_{m\geq 0} D^m(M,N)$, and for $M=N$, we write $\mathcal{D}^m(M)$ in place of $\mathcal{D}^m(M,M)$.
Note that $\mathcal{D}(M)$ inherits an algebra structure as a subalgebra of $\bbc$-linear endomorphisms of $M$. If $R$ is an integral domain 
with field of fractions $\mathbb{K}$, then 
\begin{equation}
\label{sub-diff}
\mathcal{D}_R(M) = \{ D \in \mathcal{D}_{\mathbb{K}}(M \otimes_R \mathbb{K}) : D \cdot M \subseteq M \}. 
\end{equation}
These constructions naturally sheafify to sheaves of vector spaces $\mathcal D^m(\mathcal M,\mathcal N)$ and $\mathcal D(\mathcal M,\mathcal N)$ on a $\bbc$-scheme $X$, 
with $\mathcal{D}(M)$ becoming a sheaf of $\bbc$-algebras. Denote
\[
\mathcal{D}^m(X) := \mathcal{D}^m({\mathcal{O}}_X) \quad \mbox{and} \quad \mathcal{D}_{\mathcal{L}}^m(X) := \mathcal{D}^m({\mathcal{L}}). 
\]
%We will write $\mathcal D^m(X)$ in place of $\mathcal D^m(\mathcal O_X)$.
For all curves $X$ except the framed ones, the isomorphism class of the algebra of differential operators $\mathcal{D}(X)$ determines the isomorphism 
class of $X$, \cite{Makar-Limanov}. For framed curves we have:
%In the case of framed curves, different framed curves can have isomorphic algebras of differential operators, motivating the following definition.
%\begin{definition}[\cite{BW04}]
%Let $\mathcal L$ and $\mathcal L'$ be torsion-free sheaves of rank $1$ on curves $C$ and $C'$, respectively.
%Then $\mathcal L$ and $\mathcal L'$ are called \vocab{differentially isomorphic} if $\mathcal D(\mathcal L)\cong\mathcal D(\mathcal L')$.
%\end{definition}

\begin{theorem} {\em{(i)}} {\em{(}}Berest--Wilson \cite{BW04}{\em{)}} For each line bundle $\mathcal L$ on a framed curve $X$, there exists $n \in \bbn$ such that
\[
\mathcal{D}_{\mathcal{L}}(X) = \mathcal{D}(X_n),
\]
where  $X_n = \spec(\bbc\oplus z^{n+1}\bbc[z])$ is the curve from Examples \ref{ex2.1} and \ref{ex2.4}.

{\em{(ii)}} {\em{(}}Letzter--Makar-Limanov \cite{LML}{\em{)}} For $m,n \in \bbn$, $\mathcal D(X_m) \cong\mathcal D(X_n)$ if and only if $m=n$.
\end{theorem}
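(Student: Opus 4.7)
For part (i), the plan is to pass through the Berest--Wilson bijection cited just above, replacing the pair $(X,\mathcal{L})$ with the corresponding point $W \in \Grad$. Using \eqref{sub-diff} with $R = A_W$, $M = W$, and $\mathbb{K} = \bbc(z)$, the first step is to identify
\[
\mathcal{D}_{\mathcal L}(X) = \{D \in \bbc(z)\langle\partial_z\rangle : D \cdot W \subseteq W\}.
\]
Writing $W = q(z)^{-1}V_C$ with $C \subset \mathscr{C}$ homogeneous and finite-dimensional, conjugation by $q(z)$ further identifies $\mathcal{D}_{\mathcal L}(X)$ with the subalgebra of $\mathcal{D}(\bba^1)$ that preserves the finite-codimensional subspace $V_C$.

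The central step is to invoke the Cannings--Holland/Berest--Wilson correspondence \cite{BW00,CH}: each $W \in \Grad$ corresponds to the equivalence class of a right ideal $I$ of the first Weyl algebra $A_1 = \bbc\langle x,\partial_x\rangle$, and $\mathcal{D}_{\mathcal L}(X) \cong \mathrm{End}_{A_1}(I)$. For the base points $W_n$ of Example \ref{ex2.4}, the corresponding endomorphism ring coincides with $\mathcal{D}(X_n)$. The plan is then to show that within each Morita-equivalence class of $A_1$-ideals there is a well-defined integer $n$, determined by an intrinsic numerical invariant of $W$, which I expect to be the index $k$ of the Calogero--Moser stratum $\mathcal{CM}_k$ containing $W$ in the decomposition \eqref{CM}, such that $\mathrm{End}_{A_1}(I) = \mathcal{D}(X_n)$. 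The main obstacle here is promoting the abstract Morita equivalence to an actual equality of algebras inside a common ambient space; this is delicate because different choices of representative ideal in the class would generally yield different subalgebras of $A_1$, and one has to select a canonical representative matching the structure of $W_n$.

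For part (ii), the strategy is to recover $n$ as a numerical invariant of the abstract algebra $\mathcal{D}(X_n)$. The natural candidate is
\[
n = \dim_\bbc (\wt A / A),
\]
where $A = A_{W_n} = \bbc \oplus z^{n+1}\bbc[z] \subset \mathcal{D}(X_n)$ is the commutative subring of multiplication operators and $\wt A = \bbc[z]$ is its normalization in the field of fractions. For this dimension to be an isomorphism invariant, the subring $A$ must be characterized purely from the algebra structure of $\mathcal{D}(X_n)$. The hard part, which is the heart of the argument of \cite{LML}, is exactly this intrinsic characterization: I would attempt to recognize $A$ as the subalgebra of order-zero elements with respect to the order filtration of $\mathcal{D}(X_n)$, where the filtration itself is reconstructed intrinsically through the Lie-theoretic growth of $\mathrm{ad}$-nilpotents, or alternatively as the unique maximal commutative subring on which the adjoint action of some distinguished locally nilpotent derivation has only polynomial growth. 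Once $A$ is identified intrinsically, $\wt A$ is recovered as its integral closure in its field of fractions, and $n$ is read off from the dimension formula, separating $\mathcal{D}(X_m)$ from $\mathcal{D}(X_n)$ whenever $m \neq n$.
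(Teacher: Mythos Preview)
The paper does not prove this theorem at all: it is stated as a citation of results from \cite{BW04} (part (i)) and \cite{LML} (part (ii)), and is used as a black box to define the differential genus and to set up the Letzter--Makar-Limanov invariant in the next subsection. So there is no ``paper's own proof'' to compare against; any argument you supply is necessarily going beyond what the paper does.

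That said, your plan for part (ii) contains a genuine gap. You propose to recover the subring $A = A_{W_n}$ intrinsically from the abstract algebra $\mathcal D(X_n)$, for instance as the order-zero part of an intrinsically reconstructed filtration or as a distinguished maximal commutative subalgebra. But part (i) of the same theorem tells you this cannot work: for \emph{every} framed curve $X$ and line bundle $\mathcal L$ with differential genus $n$, one has $\mathcal D_{\mathcal L}(X)\cong\mathcal D(X_n)$, and each such realization carries its own order filtration and its own order-zero subring $A_W$, generally not isomorphic to $A_{W_n}$. So neither ``the'' order filtration nor ``the'' maximal commutative subring of multiplication operators is an invariant of the abstract algebra. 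What Letzter and Makar-Limanov actually use is a subtler numerical invariant---essentially the codimension count that appears in the paper immediately after this theorem (the $\LM$ invariant)---and the nontrivial content of \cite{LML} is precisely that this number is an isomorphism invariant, not that any particular commutative subring is. Your sketch for part (i) is closer in spirit to the Berest--Wilson argument via ideals of $A_1$, though the passage from Morita equivalence to an actual algebra isomorphism is, as you note, the delicate point and is not something you can expect to fill in without substantial additional work.
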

The special case of part (i) the theorem for $\mathcal{L} = \mathcal{O}_X$ was proved by Kouakou \cite{kouakou}. 
By a result of Smith and Stafford \cite{Smith}, all algebras $\mathcal{D}_{\mathcal{L}}(X)$ are Morita equivalent to each other.

\begin{definition}
The \vocab{differential genus} of a line bundle $\mathcal L$ on a framed curve $X$ is defined to be the unique $n \in \bbn$ 
satisfying $\mathcal D_{\mathcal L}(x) \cong\mathcal D(X_n)$.
\end{definition}
\begin{example}
\label{ex2.8} Recall from Example \ref{ex2.5} that the line bundles associated to the Bessel planes $W_{\Be(n)} \in \Grad$ are trivial. 
One easily verifies that the corresponding curves $X_{\Be(n)}$ have differential genus $n$, i.e. that $\mathcal{D}(X_{\Be(n)}) \cong \mathcal{D}(X_n)$. 
\qed
\end{example}

\section{The growth of Fourier algebras via algebras of differential operators on line bundles}\label{sec:Thm E}
In his section we develop an algebro-geometric framework to obtain sharp estimates on the growth of the left and right Fourier algebras 
$\mathcal F_x(\psi)$, $\mathcal F_z(\psi)$ of all rank $1$ bispectral
functions $\psi(x,z)$, and use them to prove Theorem E from the introduction.

\subsection{Fourier algebras of bispectral functions and bifiltrations}
\label{3.1}
Recall that to each bispectral meromorphic functions $\psi(x,z)$ on a subdomain 
$\Omega_1 \times \Omega_2 \subseteq \bbc^2$ we associated the left and right 
Fourier algebras $\mathcal{F}_x(\psi)$ and $\mathcal{F}_z(\psi)$ (defined in \eqref{Fx}--\eqref{Fz})
and an antiisomorphism $b_\psi : \mathcal{F}_x(\psi) \to \mathcal{F}_z(\psi)$ 
(defined in \eqref{bpsi}). We will call the later the {\em{generalized Fourier map}} 
associated to $\psi(x,z)$. 

The left Fourier algebra $\mathcal{F}_x(\psi)$
has an $\bbn \times \bbn$-filtration given by \eqref{Fx-filt}. The right 
Fourier algebra $\mathcal{F}_z(\psi)$ has the $\bbn \times \bbn$-filtration
\[
\mathcal F_z^{\ell,m}(\psi) := b_\psi( \mathcal{F}_x^{m, \ell}(\psi)) = 
\{R(z,\partial_z)\in\mathcal F_z(\psi):\ord \, R(z,\partial_z)  \leq \ell,\ \ord \, (b_\psi^{-1}R)(x,\partial_x)) \leq m \}, 
\]
where $\ell, m \in \bbn$. The algebras
\[
\mathcal{B}_x(\psi) := \bigcup_{\ell \in \bbn} \mathcal{F}_x^{\ell, 0}(\psi)
\quad \mbox{and} \quad
\mathcal{B}_z(\psi) := \bigcup_{\ell \in \bbn} \mathcal{F}_z^{\ell, 0}(\psi)
\]
are precisely the bispectral algebras of $\psi(x,z)$, i.e. the algebras of differential operators 
 for which $\psi(x,z)$ is an eigenfunction in $x$ and $z$, respectively. The proof of the following lemma 
is simple and is left to the reader.
\begin{lemma} For all $W \in \Grad$, the algebras
\[
\bigcup_{\ell \in \bbn} \mathcal{F}_z^{0, \ell}(\psi) \stackrel{b_{\psi_W}^{-1}}{\cong} \mathcal{B}_x(\psi) 
\quad \mbox{and} \quad
\bigcup_{\ell \in \bbn} \mathcal{F}_x^{0, \ell}(\psi)  \stackrel{b_{\psi_W}}{\cong} \mathcal{B}_z(\psi)
\]
are canonically identified with $A_W$ and $A_{b(W)}$. 
\end{lemma}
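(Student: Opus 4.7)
The plan is as follows. First, I would reduce to the first identification: the second, $\bigcup_\ell \mathcal{F}_x^{0,\ell}(\psi_W) \cong A_{b(W)}$, follows from the first applied to $b(W)$ in place of $W$, because the bispectral involution acts on wave functions by $\psi_{b(W)}(x,z) = \psi_W(z,x)$, which simultaneously swaps the roles of $x$ and $z$ and transposes the two indices of the bifiltration on $\mathcal F$. So I focus on establishing
$$\bigcup_\ell \mathcal{F}_z^{0,\ell}(\psi_W) \stackrel{b_{\psi_W}^{-1}}{\cong} \mathcal{B}_x(\psi_W) \cong A_W.$$
The first isomorphism is essentially by definition: an $R \in \mathcal{F}_z^{0,\ell}$ has order $0$ in $z$ and so is multiplication by some rational function $f(z)$, while by the definition of the bifiltration there exists $S = b_{\psi_W}^{-1}(R) \in \bbc[x,\partial_x]$ with $S\psi_W = f(z)\psi_W$, i.e.\ $S \in \mathcal{B}_x(\psi_W)$. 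This gives a bijection onto $\mathcal{B}_x(\psi_W)$; it is an algebra isomorphism (rather than merely an antiisomorphism) because $\mathcal{B}_x(\psi_W)$ is commutative: any two operators with the common eigenfunction $\psi_W$ commute on $\psi_W$, hence commute globally, since $\psi_W$ is not annihilated by any nonzero element of $\bbc(x)[\partial_x]$.

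Next, for $\mathcal{B}_x(\psi_W) \cong A_W$, I identify both with subalgebras of $\bbc(z)$ via the eigenvalue map $S \mapsto f(z)$ on the bispectral side and the natural inclusion $A_W \subseteq \bbc(z)$. The assertion becomes
$$\bigl\{\,f(z)\in\bbc(z): \exists\,S\in\bbc[x,\partial_x]\text{ with }S\psi_W = f(z)\psi_W\,\bigr\} = A_W.$$
My approach is through the Sato dressing $\psi_W = K_W\cdot e^{xz}$, where $K_W = 1 + \sum_{j\geq 1} k_j(x)\partial_x^{-j}$ is the wave pseudo-differential operator in $x$ (with rational coefficients in the adelic setting). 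Since $K_W$ acts only in $x$ while $f(z)$ is scalar in $z$,
$$f(z)\psi_W = K_W\bigl(f(z)e^{xz}\bigr) = K_Wf(\partial_x)e^{xz} = \bigl(K_Wf(\partial_x)K_W^{-1}\bigr)\psi_W.$$
Because the only pseudo-differential operator in $x$ annihilating $\psi_W = K_We^{xz}$ is zero, a differential operator $S$ with $S\psi_W = f(z)\psi_W$ exists if and only if $K_Wf(\partial_x)K_W^{-1}$ happens to be a genuine differential operator, i.e.\ has no negative powers of $\partial_x$.

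The main obstacle is then the Sato--Segal--Wilson characterization
$$f(z)W\subseteq W \iff K_Wf(\partial_x)K_W^{-1}\in\bbc[x,\partial_x],$$
which identifies $A_W$ with precisely the $f$'s for which the dressed multiplication operator is differential. This rests on recovering $W$ as the $K_W^{-1}$-dressing of the trivial plane $\bbc[z]$, together with the elementary fact that $\bbc[z]$ is preserved by $f(\partial_x)$ iff $f$ is a polynomial. In the adelic setting one must be careful, because $W\subseteq\bbc(z)$ is not closed in a Hilbert completion and $K_W$ has rational coefficients; I would handle this rigorously via the explicit description $W = q(z)^{-1}\bbc[z]\cdot P(x,\partial_x)$ from \eqref{Wpsi}, using the right $\bbc[x,\partial_x]$-module structure on $\bbc[z]$ in which $\partial_x$ acts as multiplication by $z$ and $x$ as $\partial_z$. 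Under this dictionary, the inclusion $f(z)W\subseteq W$ translates to a congruence $P(x,\partial_x)f(\partial_x)\equiv g(\partial_x)P(x,\partial_x)\pmod{x\bbc[x,\partial_x]}$ for some $g\in\bbc[\partial_x]$, from which $S(x,\partial_x)$ can be extracted by dividing by $p(x)$ and invoking the defining relations \eqref{psiW}--\eqref{psiW2}.
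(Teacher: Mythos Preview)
The paper explicitly leaves this proof to the reader, calling it ``simple'', so there is no proof in the paper to compare against.  Your argument is correct.

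A few remarks on economy.  The first isomorphism is indeed immediate from the definitions, as you say.  For the second, your Sato--dressing argument is the standard one and works, but note that what you sketch in your final paragraph---using the explicit description $W=\vspan\{\partial_x^n(p(x)\psi_W(x,z))|_{x=0}\}$ from \eqref{Wpsi}---is exactly the mechanism behind the paper's later Theorem~\ref{ident} (which gives the full identification $\mathcal F_z(\psi_W)=\mathcal D(W)$, of which your lemma is the order-zero slice).  So a slightly more direct route, avoiding pseudo-differential operators entirely, is: if $f(z)\psi_W=S(x,\partial_x)\psi_W$ for some differential operator $S$, then applying $\partial_x^n(p(x)\,\cdot\,)|_{x=0}$ to both sides and using the Leibniz rule shows $f(z)$ sends each generator of $W$ into $W$; conversely, if $f(z)W\subseteq W$, one reads off the coefficients of a differential operator $S$ from the action of $f(z)$ on the generators, again via \eqref{Wpsi}.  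This bypasses the care you (rightly) flag about rational coefficients in $K_W$.

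One small point: you write $f(z)\in\bbc(z)$, but the paper observes just before defining $A_W$ that any Laurent series preserving $W$ is automatically a polynomial, so $A_W\subseteq\bbc[z]$.  This is worth noting, since it confirms that the eigenvalue map $\mathcal B_x(\psi_W)\to A_W$ lands in polynomials, consistent with $b_{\psi_W}$ carrying differential operators to differential operators.
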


\begin{example} Consider the Bessel wave function $\psi_{\Be(n)}(x,y)$ from Examples \ref{ex2.2} and \ref{ex2.5} for $n \in \bbn$.
They are fixed by the bispectral involution $b$ of $W$, and as a consequence, 
the spaces $\mathcal F_z^{\ell,m}(\psi_{\Be(n)})$ for the right Fourier algebras 
are obtained from the spaces $\mathcal F_x^{\ell,m}(\psi_{\Be(n)})$ for the 
left Fourier algebras by replacing $x$ with $z$. 

Let $Q_n = \{1,3,\dots, 2n-1\}$.
The left Fourier algebra has the simple expression
$$\mathcal F_x(\psi_{\Be(n)}) = \vspan_\bbc\{x^rf(x\partial_x): f(k-n)=0\ \forall k\in \bbz_+\diff Q_n,\ \text{with}\ k+r\notin \bbz_+\diff Q_n\}.$$
When $n=0$, this is precisely the Weyl algebra.

The bifiltration of the left Fourier algebra is given by
$$\mathcal F_x^{\ell,m}(\psi_{\Be(n)}) = \Span \{x^rf(x\partial_x)\in \mathcal F_x(\psi_{\Be(n)}): \deg(f)+r\leq m,\ \deg(f)\leq \ell\}.$$
In particular, we see that $\mathcal F_x^{\ell,m}(\psi_{\Be(n)})$ contains an element of order $\ell$ with leading coefficient $x^m$ if and only if $\ell+m\notin Q_n$ and thus
$$\dim F^{\ell,m}(\psi_{\Be(n)})\geq (\ell+1)(m+1)-n(n+1),$$
with equality when $\ell>n$ and $m\geq n$.
In particular the differential genus of the associated line bundle is $n(n+1)/2$.
\end{example}

\begin{example}\label{first family algebra}
For a fixed $a$, consider the rank $1$ bispectral function
$$\psi(x,z) = \frac{(x-z^{-1})^3 + z^{-3} + a}{x^3 + a}e^{xz},$$
which is not fixed by the bispectral involution of $\Grad$. 
The point of Wilson's adelic Grassmannian associated to $\psi(x,z)$ is
\begin{align*}
W &= \Span\left\lbrace \partial_x^k|_{x=0}\psi(x,z) : k \in \bbn \right\rbrace = \Span\left\lbrace 1, z + 3a^{-1} z^{-2}, z^2, z^3, \dots\right\rbrace.
\end{align*}
In this case
\begin{align*}
\mathcal A_W &= \Span\{1,z^2,z^4,z^5,z^6,\dots\},\\
\mathcal F_z(\psi) &= {\mathcal{D}}(W) = \Span\{S_{\ell,m}(z,\partial_z): \ell,m\geq 0,\ \ell+m\neq 1,3\},
\end{align*}
where
$$S_{\ell,m}(z,\partial_z) = \ad_{z^2}^m(S_{\ell+m,0}(z,\partial_z))$$
with
\begin{align*}
S_{2j+5k,0}(z,\partial_z) &= \left[\partial_z^2  + 2a z^2\partial_z + 2a z - \frac{6}{z^2}\right]^j\left[\partial_z^5 - \frac{15}{z^2}\partial_z^3 - \left(\frac{5}{2}a - \frac{45}{z^3}\right)\partial_z^2 - \frac{45}{z^4}\partial_z\right]^k.
\end{align*}
The associated filtered components of the right Fourier algebra are
$$\mathcal F^{\ell,m}_z(\psi) = \Span\{S_{j,k}(z,\partial_z): 0\leq j\leq \ell,\ \ 0\leq k\leq m,\ \ j+k\neq 1,3\}.$$
%The associated graded algebra is
%$$\mathcal G_z(\psi) = \Span\{x^iz^j: i,j\geq 0,\ \ i+j\neq 1,3\},$$
%so that $\text{LM}(\psi) = 6$ and $\dim_k\mathcal F^{\ell,m}_x(\psi) \geq (\ell+1)(m+1)-6$ with equalitz for $\ell,m\geq 4$.

%The preimages of $z^2$, $S_{2,0}(z,\partial_z)$ and $S_{5,0}(z,\partial_z)$ under $b_\psi$ are
%\begin{align*}
%b_\psi^{-1}(z^2) &= \partial_x^2 + \frac{6x(2r-x^3)}{(r+x^3)^2}\\
%b_\psi^{-1}(S_{2,0}(z,\partial_z)) &= -2r\partial_xx\partial_x - \frac{36r^2x^2}{(r+x^3)^2} + \frac{6rx^2}{r+x^3} + x^2,\\
%b_\psi^{-1}(S_{5,0}(z,\partial_z)) &= x^5 + \frac{5}{2}rx^2.
%\end{align*}
%and these determine generators of the Fourier algebra $\mathcal F_x(\psi)$.
\end{example}
\subsection{Fourier algebras, differential operators on line bundles and $W$-constraints}
\label{3.2}
Following \cite[Sect. 5]{BW04}, for $W \in \Grad$, define the ring of differential operators on $W$ 
\[
\mathcal{D}(W) := \{ D \in \bbc(z)[\partial_z] : D \cdot W \subseteq W\},
\]
where we use the canonical action of $\bbc(z)[\partial_z]$ on $\bbc(z)$. Let $(X, \mathcal L)$ be the pair 
of a framed affine curve and line bundle on it associated to $W$. The framing $\iota: \bba^1\rightarrow X$ 
identifies the function field of $X$ with $\bbc(z)$, and by \eqref{sub-diff}, $\mathcal{D}_{\mathcal{L}}(X)$ is canonically 
embedded in $\mathcal{D}( \bbc(z)) \cong \bbc(z)[\partial_z] $. Since $\mathcal L$ is the sheafification of the 
$A_W \cong \bbc[X]$-module $W$, 
we obtain at once from \eqref{sub-diff} that $\mathcal{D}_{\mathcal{L}}(X)$ is canonically identified with $\mathcal{D} (W)$,
(see also \cite[Prop. 2.6 and 5.5]{BW04}). The next theorem relates the right Fourier algebras of any rank $1$ bispectral function to 
these algebras.
\begin{theorem} 
\label{ident}
Let $W \in \Grad$ and $(X, \mathcal L)$ be the corresponding pair of a framed affine curve and line bundle on it 
as defined in \S \ref{2.2}. We have the canonical identifications:
\[
\mathcal{F}_z (\psi_W(x,z)) = \mathcal{D} (W) = \mathcal{D}_{\mathcal{L}}(X). 
\]
\end{theorem}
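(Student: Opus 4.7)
The second identification $\mathcal D(W)=\mathcal D_{\mathcal L}(X)$ is essentially already built into the discussion preceding the theorem: the framing $\iota$ identifies the function field of $X$ with $\bbc(z)$, $\mathcal L$ is the sheafification of $W$ viewed as an $A_W$-module, and \eqref{sub-diff} then identifies $\mathcal D_{\mathcal L}(X)$ with $\{D\in\bbc(z)[\partial_z]:D\cdot W\subseteq W\}=\mathcal D(W)$. The content of the theorem is thus the equality $\mathcal F_z(\psi_W)=\mathcal D(W)$ as subalgebras of $\bbc(z)[\partial_z]$, which I will prove by a double inclusion inside $\bbc(z)[\partial_z]$.

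\textbf{Forward inclusion} $\mathcal F_z(\psi_W)\subseteq\mathcal D(W)$. Let $R(z,\partial_z)\in\mathcal F_z(\psi_W)$ and pick $L(x,\partial_x)\in\mathcal F_x(\psi_W)$ with $L\psi_W=R\psi_W$. By the first form of the reconstruction \eqref{Wpsi}, the space $W$ is the $\bbc$-span of $w_n(z):=\partial_x^n(p(x)\psi_W(x,z))|_{x=0}$. Since $R(z,\partial_z)$ commutes both with $\partial_x$ and with multiplication by $p(x)$,
\[
R(z,\partial_z)\cdot w_n(z)=\partial_x^n\bigl(p(x)L(x,\partial_x)\psi_W(x,z)\bigr)\big|_{x=0}.
\]
After clearing the poles of the coefficients of $L$ at the roots of $p(x)$ (controlled by the normalization $\psi_W=(pq)^{-1}Pe^{xz}$), I rewrite $p(x)L(x,\partial_x)$ as $\sum_{m}B_m(x,\partial_x)\,p(x)$ inside $\bbc(x)[\partial_x]$ and expand the outer $\partial_x^n$ by the Leibniz rule; each summand evaluates at $x=0$ to a scalar multiple of some $w_{m'}(z)$, so $R\cdot w_n\in W$.

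\textbf{Reverse inclusion} $\mathcal D(W)\subseteq\mathcal F_z(\psi_W)$. For $R(z,\partial_z)\in\mathcal D(W)$, I use the second form of the reconstruction \eqref{Wpsi},
\[
W=q(z)^{-1}\bigl(\bbc[z]\cdot P(x,\partial_x)\bigr),
\]
in which $\bbc[z]$ carries the right $\bbc[x,\partial_x]$-module structure $z^k\leftrightarrow[\partial_x^k]$ in $\bbc[x,\partial_x]/x\bbc[x,\partial_x]$. The hypothesis $R\cdot W\subseteq W$ translates to the statement that $q(z)Rq(z)^{-1}$ sends the generator $[P(x,\partial_x)]$ back into $\bbc[z]\cdot P(x,\partial_x)$, producing $L(x,\partial_x)\in\bbc[x,\partial_x]$ with
\[
q(z)Rq(z)^{-1}\cdot[P(x,\partial_x)]=[P(x,\partial_x)\,L(x,\partial_x)]\quad\text{in}\quad\bbc[x,\partial_x]/x\bbc[x,\partial_x].
\]
Applying both sides to $e^{xz}$, dividing by $p(x)q(z)$, and using the second factorization in \eqref{psiW2} to invert $P$ up to polynomial factors by $\wt P$, this module-theoretic identity converts into a genuine differential equation $\wt L(x,\partial_x)\psi_W(x,z)=R(z,\partial_z)\psi_W(x,z)$ for a certain $\wt L\in\bbc(x)[\partial_x]$, so that $R\in\mathcal F_z(\psi_W)$.

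\textbf{Main obstacle.} The harder direction is the reverse one. The subtle point is that the multiplier $L$ manufactured in the quotient $\bbc[x,\partial_x]/x\bbc[x,\partial_x]$ must be lifted to a genuine differential (as opposed to pseudo-differential) operator in $x$ whose rational coefficients have poles only where $p(x)$ does. This is exactly where the rank-one assumption $W\in\Grad$ and the factorization identities \eqref{psiW2}, which supply $\wt P$ as a right-inverse of $P$ up to polynomial factors, play a decisive role; without them the identity at the level of $\bbc[x,\partial_x]/x\bbc[x,\partial_x]$ would not push forward to an identity of operators acting on $\psi_W$.
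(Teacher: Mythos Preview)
Your forward inclusion $\mathcal F_z(\psi_W)\subseteq\mathcal D(W)$ follows the paper's argument: both use the first description $W=\Span\{\partial_x^n(p\psi_W)|_{x=0}\}$ together with the Leibniz rule.

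For the reverse inclusion your route diverges from the paper's and, as written, has a gap. In the right $\bbc[x,\partial_x]$-module $\bbc[z]\cong\bbc[x,\partial_x]/x\bbc[x,\partial_x]$, the action is $z^k\cdot S=[\partial_x^kS]$, so
\[
\bbc[z]\cdot P=\{[f(\partial_x)P]:f\in\bbc[z]\},
\]
which is \emph{not} the cyclic submodule $[P]\cdot\bbc[x,\partial_x]=\{[PL]:L\in\bbc[x,\partial_x]\}$. Hence $R\cdot W\subseteq W$ only gives $(qRq^{-1})\cdot[P]\in\{[f(\partial_x)P]\}$, not the identity $(qRq^{-1})\cdot[P]=[PL]$ that you assert. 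Moreover, even an identity in $\bbc[x,\partial_x]/x\bbc[x,\partial_x]$ records only the values at $x=0$; ``applying both sides to $e^{xz}$'' yields an equality of Taylor coefficients at $x=0$, not an operator identity for all $x$. Your appeal to \eqref{psiW2} and $\wt P$ does not bridge this: one still needs to produce a genuine $\wt L\in\bbc(x)[\partial_x]$ with $\wt L\psi_W=R\psi_W$, and you have not shown how the factorization converts a quotient-level identity into this.

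The paper avoids both issues by a direct construction. Working with $\psi=p\psi_W$, it uses $R\cdot W\subseteq W$ to write $R\cdot\partial_x^j\psi|_{x=0}=\sum_\ell c_{j\ell}\,\partial_x^\ell\psi|_{x=0}$ with a uniform bound $\ell\le j+d$, then \emph{defines} coefficients $a_0(x),\dots,a_d(x)$ by their Taylor series at $x=0$ via the triangular system
\[
c_{j\ell}=\sum_{k=0}^d\binom{j}{k+j-\ell}a_k^{(k+j-\ell)}(0),
\]
so that $R\psi$ and $\sum_k a_k\partial_x^k\psi$ have identical $x$-jets at $0$; analyticity then gives $R\psi=\sum_k a_k\partial_x^k\psi$ and hence $R\in\mathcal F_z(\psi_W)$. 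This bypasses any need to lift from the quotient or to invoke $\wt P$.
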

Applying the bispectral involution of $\Grad$, we obtain that for the left Fourier algebras of any rank $1$ bispectral function
there are the canonical identifications
\[
\mathcal{F}_x (\psi_W(x,z)) = \mathcal{D} (b(W)) = \mathcal{D}_{b(\mathcal{L})}(b(X)),
\]
where $(b(X), b(\mathcal{L}))$ is the pair of a framed affine curve and line bundle on it associated to $b(W) \in \Grad$. 
\begin{proof}[Proof of Theorem \ref{ident}]
Let $\psi_W(x,z) = e^{xz}\frac{h(x,z)}{p(x)q(z)}$ for polynomials $p(x),q(z),$ and $h(x,z)$ and set $\psi(x,z) = p(x)\psi_W(x,z)$.
Recall that
$$W = \vspan\left\lbrace\frac{\partial^j}{\partial x^j}(\psi(x,z))|_{x=0}: j=0,1,\dots\right\rbrace.$$
If $R(z,\partial_z)\in \mathcal F_z(\psi_W)$, then
$$R(z,\partial_z)\cdot \psi(x,z) = \sum_{k=0}^d a_k(x)\frac{\partial^k \psi(x,z)}{\partial x^k},\ \text{for}\ p(x)b_\psi^{-1}(R(z,\partial_z))p(x)^{-1} = \sum_{k=0}^d a_k(x)\partial_x^k.$$
It follows that
%$$R(z,\partial_z)\cdot\frac{\partial^j}{\partial x^j}\psi(x,z) = \sum_{k=0}^d \sum_{i=0}^j \binom{j}{i} a_k^{(i)}(x)\frac{\partial^{k+j-i} \psi(x,z)}{\partial x^{k+j-i}}.$$
$$R(z,\partial_z)\cdot\frac{\partial^j}{\partial x^j}\psi(x,z) = \sum_{\ell=0}^{j+k} \sum_{k=0}^d \binom{j}{k+j-\ell} a_k^{(k+j-\ell)}(x)\frac{\partial^{\ell} \psi(x,z)}{\partial x^{\ell}}.$$
Thus $R(z,\partial_z)\cdot W \subseteq W$ and this proves $\mathcal F_z(\psi_W)\subseteq \mathcal D(W)$.

Conversely, suppose that $R(z,\partial_z)\in\mathcal D(W)$.
Choose $d>0$ such that
$$R(z,\partial_z)\frac{\partial^k}{\partial x^k}\psi(x,z)|_{x=0} \in \vspan\left\lbrace\frac{\partial^j}{\partial x^j}(\psi(x,z))|_{x=0}: j=0,1,\dots, d+k\right\rbrace.$$
Then for all $j\geq 0$ there exist constants $c_{j\ell}$ for $0\leq \ell\leq j+d$ such that
$$R(z,\partial_z)\cdot\frac{\partial^j}{\partial x^j}\psi(x,z) = \sum_{\ell=0}^{j+d} c_{j\ell}\frac{\partial^{\ell} \psi(x,z)}{\partial x^{\ell}}.$$
Define functions $a_0(x),\dots, a_d(x)$ whose Taylor series recursively satisfy
$$c_{j\ell} = \sum_{k=0}^d \binom{j}{k+j-\ell} a_k^{(k+j-\ell)}(0), 0\leq \ell\leq j+k.$$
Then 
$$R(z,\partial_z)\cdot \psi(x,z) = \sum_{k=0}^d a_k(x)\frac{\partial^k \psi(x,z)}{\partial x^k},$$
and therefore $R(z,\partial_z)\in \mathcal F_z(\psi_W)$.
This proves $\mathcal D(W)\subseteq\mathcal F_z(\psi_W)$.
\end{proof}

\begin{remark} 
\label{rmk-incarnations}
The $W_{1 + \infty}$-algebra, introduced in \cite{KP}, is the central extension of $w_\infty := \bbc[z^{\pm 1}][\partial_z]$ given by
\[
[ z^\ell e^{s z \partial_z}, z^m e^{t z \partial_z}] := (e^{s m} - e^{t \ell}) z^{ \ell +m} e^{(s+t) z \partial_z} + \delta_{\ell, -m} \frac{e^{s m} - e^{t \ell}}{1- e^{s+t}} c
\]
in terms of generating functions in $s$ and $t$. The two algebras are the algebras of additional symmetries of the KP hierarchy, 
written in terms of actions on tau functions and wave functions, respectively, see \cite[Sect. 4 and 5]{vanMoerbeke}. 

The subalgebra of $W_{1+ \infty}$ fixing a given tau function of the KP hierarchy is called the algebra of  
$W$-constraints of the tau function. For a given $W \in \Grad$, the algebra of the $W$-constraints of the corresponding 
tau function $\tau_W$ is precisely the central extension of the algebra of differential operators $\mathcal{D}(W)$ on $W$. 
In this way, for all $W \in \Grad$, Theorem \ref{ident} provides two other incarnations of the right Fourier algebra $\mathcal{F}_z(\psi_W(x,z))$ 
as the algebra of the $W$-constraints of the tau function $\tau_W$ (without the central extension) and the 
algebra of differential operators on the line bundle $\mathcal{L}$ on the framed affine curve $X$ associated to $W$. 
Theorem E describes the exact size of the $W$-constraints of the tau functions associated to 
all points of the adelic Grassmannian. A geometric interpretation of the $W$-symmetries of the adelic Grassmannian was given by 
Ben-Zvi and Nevins in \cite{BZN}.
\end{remark}
Let $\varphi: \mathcal L_\eta\rightarrow\mathcal O_{X,\eta}$ be the local trivialization of the line bundle $\mathcal L$ of a point $W\in\Grad$
introduced in \S \ref{2.2}.
The bifiltration of $\mathcal F_z(\psi)$ is naturally induced by $\varphi$ in the following way.
First note that postcomposing $\varphi$ with the natural map $\iota_\eta: \mathcal O_{X,\eta}\rightarrow \mathcal O_{\bba^1_\bbc,\eta} = \bbc(z)$ 
defines an injection $\mathcal L(X)\rightarrow \bbc(z)$, and thereby a filtration of $\mathcal L(X)$ by degree.
Let 
\[
\mathcal L_n(X) = \iota_\eta(\varphi(\mathcal L(X))) \cap \{f(z)\in\bbc(z): \deg(f(z))\leq n\}.
\]
\begin{proposition}
$$\mathcal F_z^{\ell,m}(\psi) = \{R(z,\partial_z)\in\mathcal D^\ell(\psi): R(z,\partial_z)\cdot \mathcal L_n(X)\subseteq \mathcal L_{n+m}(X)\ \forall n \in \bbn\}.$$
\end{proposition}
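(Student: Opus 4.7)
The plan is to combine Theorem \ref{ident}, which identifies $\mathcal F_z(\psi) = \mathcal D(W) = \mathcal D_{\mathcal L}(X)$, with a direct comparison of two natural bifiltrations on $\mathcal D(W)$: the abstract one coming from the generalized Fourier map $b_\psi$, and the geometric one induced by the shift on $\mathcal L_n(X)$. Under this identification the condition $R\in\mathcal D^\ell(\psi)$ corresponds to order at most $\ell$, so the proposition reduces to matching the second index $m$ with the geometric shift. The bridge is the spanning family $V_j := \vspan_\bbc\{\partial_x^k[p(x)\psi_W(x,z)]|_{x=0}: 0\leq k\leq j\}$ of $W$ coming from \eqref{Wpsi}.

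For the inclusion $\subseteq$, take $R\in\mathcal F_z^{\ell,m}$ and write $S = p(x) b_\psi^{-1}(R) p(x)^{-1} = \sum_{k=0}^m a_k(x)\partial_x^k$ as in the proof of Theorem \ref{ident}. Differentiating the intertwining relation $R\cdot\psi = S\cdot\psi$ by $\partial_x^j$ and specializing at $x=0$ yields
\[
R\cdot \partial_x^j\psi(x,z)|_{x=0} \;=\; \sum_{\ell = 0}^{j+m}\Big(\sum_{k=0}^{m}\binom{j}{k+j-\ell}\,a_k^{(k+j-\ell)}(0)\Big)\,\partial_x^\ell\psi(x,z)|_{x=0}\;\in\; V_{j+m}.
\]
Using $\psi = P(x,\partial_x)\cdot e^{xz}/q(z)$ to analyze the leading $z$-behavior of $\partial_x^j\psi|_{x=0}$ shows $V_j\subseteq\mathcal L_{j-\nu}(X)$, where $\nu$ is a shift coming from the vanishing order of $p(x)$ at $x=0$, with equality for $j$ past the differential genus of $\mathcal L$. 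The shift $R\cdot V_j\subseteq V_{j+m}$ then descends by cofinality to $R\cdot\mathcal L_n\subseteq\mathcal L_{n+m}$ for all $n$.

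For the reverse inclusion $\supseteq$, assume $R\in\mathcal D^\ell(\psi)$ satisfies $R\cdot\mathcal L_n(X)\subseteq\mathcal L_{n+m}(X)$ for all $n$. Theorem \ref{ident} supplies $S = b_\psi^{-1}(R)$; expanded as $S = \sum_{k\geq 0}a_k(x)\partial_x^k$, cofinality transfers the shift condition to $R\cdot V_j\subseteq V_{j+m}$ for all large $j$. Reading off the top-order contribution in the displayed formula, the coefficient of $\partial_x^{j+K}\psi|_{x=0}$ for $K := \ord S$ is a nonzero polynomial expression in the Taylor coefficients of $a_K$ at $x=0$, which for all large $j$ cannot vanish unless $a_K\equiv 0$; this forces $K\leq m$.

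The main obstacle will be making the cofinality $\{V_j\}\leftrightarrow\{\mathcal L_n(X)\}$ and the shift $\nu$ fully precise, and in the reverse argument dealing with the case when the leading Taylor coefficients $a_K^{(i)}(0)$ vanish. The cleanest route is to use the presentation $W = q(z)^{-1}(\bbc[z]\cdot P(x,\partial_x))$ from \eqref{Wpsi}, in which $V_j$ is the image of $\bbc[z]_{\leq j}$ under the induced right $\bbc[x,\partial_x]$-action and $V_j = \mathcal L_{j-\deg q}(X)$ holds for $j$ large. When the Taylor coefficients of $a_K$ at $x=0$ degenerate, one picks a generic base point $x_0\in\bbc$ with $a_K(x_0)\neq 0$ (possible since each $a_k$ is rational with finitely many poles) and runs the top-order analysis there, using the translation invariance of the bispectral identity and the fact that the geometric shift of $R$ on $W\subseteq\bbc(z)$ is intrinsic to $W$.
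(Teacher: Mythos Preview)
Your overall strategy matches the paper's: both reduce to the characterization, implicit in the proof of Theorem~\ref{ident}, that $R\in\mathcal F_z^{\ell,m}(\psi)$ if and only if $\ord R\leq\ell$ and $R\cdot V_j\subseteq V_{j+m}$ for all $j$ (with $V_j$ the span of $\partial_x^k(p\psi)|_{x=0}$ for $k\leq j$), and then identify $V_j$ with $\mathcal L_j(X)$. Your forward inclusion $\subseteq$ is correct and is essentially what the paper's two-line proof records.

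The reverse inclusion, however, is genuinely problematic, and your unease about it is well founded---but your proposed fix does not work, because the inclusion $\supseteq$ is \emph{false as stated}. Take $W=\bbc[z]$, $\psi=e^{xz}$, and $R=z\partial_z$ (or $R=z^2\partial_z$). Then $R\cdot z^n = nz^n$ (respectively $nz^{n+1}$), so $R$ lies in the right-hand side for $m=0$ (respectively $m=1$); but $b_\psi^{-1}(z\partial_z)=x\partial_x$ and $b_\psi^{-1}(z^2\partial_z)=x\partial_x^2$ have orders $1$ and $2$, so $R\notin\mathcal F_z^{\ell,0}$ (respectively $\mathcal F_z^{\ell,1}$). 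In general, for $R=\sum_k c_k(z)\partial_z^k$ the degree-shift on $\bbc(z)$ is $\max_k(\deg c_k-k)$, whereas $\ord b_\psi^{-1}(R)=\max_k\deg c_k$ (cf.\ the remark in the proof of Theorem~E); these disagree whenever the coefficient of maximal degree multiplies a positive power of $\partial_z$.

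Your generic-base-point rescue cannot close this gap: at $x_0\neq 0$ the vectors $\partial_x^j(p\psi)|_{x=x_0}$ are no longer in $W$, and the ``intrinsic geometric shift'' you invoke is precisely $\max_k(\deg c_k-k)$, not $\ord b_\psi^{-1}(R)$. The paper's proof is too terse to expose this; it invokes the converse construction from Theorem~\ref{ident}, but that construction becomes inconsistent when $d$ is the \emph{minimal} shift (try $R=z^2\partial_z$, $d=1$: the recursion forces $a_1(0)=c_{j,j+1}=j$ for all $j$). Only the inclusion $\mathcal F_z^{\ell,m}(\psi)\subseteq\{R\in\mathcal D^\ell:R\cdot\mathcal L_n\subseteq\mathcal L_{n+m}\ \forall n\}$ survives, and your argument for that direction is fine.
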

\begin{proof}
From the proof of Theorem \ref{ident}, we see that $R(z,\partial_z)\in \mathcal F_z^{\ell,m}(\psi)$ if and only if $R(z,\partial_z)\in \mathcal D^\ell(\psi)$ and
$$R(z,\partial_z)\frac{\partial^k}{\partial x^k}\psi(x,z)|_{x=0} \in \vspan\left\lbrace\frac{\partial^j}{\partial x^j}(p(x)\psi(x,z))|_{x=0}: j=0,1,\dots, m+k\right\rbrace.$$
for some polynomial $p(x)$.
The statement of the Proposition follows immediately.
\end{proof}
\subsection{Geometric estimates of the sizes of Fourier algebras}
Recall Wilson's decomposition of the adelic Grassmannian \eqref{CM}. The $k$-th stratum is isomorphic to
\[
{\mathcal{CM}}_k \cong \{ (X, Z) \in {\mathrm{Mat}}^2_{k \times k} (\bbc) : \mathrm{rank} ([X,Z] + I_k) = 1 \} / GL_k(\bbc),
\]
where the action of $GL_k(\bbc)$ is faithful and the quotient is geometric. The following results were proved in 
\cite[Thm. 5.6 and Prop. 5.5]{BW04}.

\begin{theorem}[Berest--Wilson] 
\label{th3.7}
For each $W \in \Grad$ the differential genus of the associated pair $(X, \mathcal{L})$ of a framed curve and line bundle on it, 
equals the integer $k \in \bbn$ such that $W \in {\mathcal{CM}}_k$. 
\end{theorem}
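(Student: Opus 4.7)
The plan is to extract a numerical invariant from the algebra $\mathcal D_{\mathcal L}(X) = \mathcal F_z(\psi_W)$ (using Theorem \ref{ident}) that equals both the differential genus $n$ and the Calogero--Moser index $k$. By the Letzter--Makar-Limanov rigidity theorem, $n$ is determined by the abstract algebra $\mathcal D_{\mathcal L}(X)$, so any invariant separating $\mathcal D(X_n)$ from $\mathcal D(X_{n'})$ for $n \ne n'$ reduces the statement to a matching problem.

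First I would verify the statement on the baseline family $W_n$ of Example \ref{ex2.1}. There $A_{W_n} = \bbc \oplus z^{n+1}\bbc[z]$ and the associated line bundle is trivial, so $(X,\mathcal L) = (X_n, \mathcal O_{X_n})$ and the differential genus equals $n$ by definition. On the Calogero--Moser side, I would exhibit explicit representative $n \times n$ matrices: take $A$ nilpotent in Jordan canonical form and $B$ adjusted so that $[A,B] + I_n$ has rank one and Wilson's tau-function formula reproduces $\tau_{W_n}$, confirming $W_n \in \mathcal{CM}_n$. This matches the two indices on a reference cross-section of $\Grad$.

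For a general $W \in \mathcal{CM}_k$, I would proceed by induction on $k$ using 1-step Darboux transformations in the sense of \cite{BHY97}: every such $W$ is obtained from some $W' \in \mathcal{CM}_{k-1}$ by a single elementary Darboux step. Granting $\mathcal D_{\mathcal L'}(X') \cong \mathcal D(X_{k-1})$ by the inductive hypothesis, the task reduces to showing that the elementary step raises the differential genus by exactly one. Locally, a Darboux step either adds a new singular point to $X$ or deepens an existing one and simultaneously introduces a torsion-free modification of $\mathcal L$ at that point; the net change in $\mathcal D_{\mathcal L}(X)$ needs to be matched to the increment from $\mathcal D(X_{k-1})$ to $\mathcal D(X_k)$.

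The main obstacle is precisely this local matching. Because the Smith--Stafford Morita equivalence \cite{Smith} places all algebras $\mathcal D_{\mathcal L}(X)$ within a single Morita class, the specific isomorphism class among $\{\mathcal D(X_n)\}$ is not a Morita invariant and must be controlled by finer, non-Morita-invariant data at the singularity. The cleanest route is probably to combine the Berest--Wilson machinery of \cite{BW04} with the Etingof--Ginzburg description of $\mathcal{CM}_k$ by simple modules of the rational Cherednik algebra of $S_k$ at $t=0$, replacing the step-by-step induction with a single global matching of algebra isomorphism classes indexed by a common stratum; this effectively turns the problem into one of comparing two finite-dimensional moduli spaces that carry the same tautological bundle of algebras.
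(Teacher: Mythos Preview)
The paper does not prove Theorem~\ref{th3.7}; it is quoted from \cite[Thm.~5.6 and Prop.~5.5]{BW04} and used as a black box. So there is no proof in the paper to compare against, and your proposal is being measured against the original Berest--Wilson argument rather than anything the present authors wrote.

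That said, your outline has genuine gaps. The inductive step rests on the claim that every $W \in \mathcal{CM}_k$ is obtained from some $W' \in \mathcal{CM}_{k-1}$ by a single elementary Darboux transformation, and that such a step raises the differential genus by exactly one. Neither assertion is established: elementary Darboux steps (adding one homogeneous condition) do not in general move between adjacent Calogero--Moser strata, and the change in differential genus under such a step depends on how the new condition interacts with the existing singular structure of $(X,\mathcal L)$. Your own discussion of the ``main obstacle'' effectively concedes this, and the proposed remedy---invoking the Berest--Wilson machinery of \cite{BW04}---is circular, since that is precisely where Theorem~\ref{th3.7} is proved.

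The actual proof in \cite{BW04} does not proceed by Darboux induction. It goes through the Letzter--Makar-Limanov codimension invariant $\LM(X,\mathcal L)$ (the codimension in $\bbc[z,x]$ of the span of leading terms of elements of $\mathcal D_{\mathcal L}(X)$) and shows directly that $\LM(X,\mathcal L) = 2k$ via a local computation at each singular point, matching the formula $k = \sum_a g_a(W)$ recalled in the paper just after Theorem~\ref{th3.7}. The identification of $\LM$ with twice the differential genus is then a separate computation for the model curves $X_n$. If you want a self-contained argument, that is the structure to aim for rather than an inductive Darboux scheme.
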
 
In this setting, let
\[
g(W):=k.
\]
This number can be calculated directly from $W$ as follows (see Theorem 8.1 in \cite{BW04} and the discussion after it).
\begin{proposition}
Let $W\in\Grad$ and let $p(z)\in \bbc[z]$ with $p(z)W\subseteq\bbc[z]$.
For each $a\in\bbc$, define $r_{0,a}<r_{1,a}<\dots$ such that 
$$\{r_{i,a}\}_{i=0}^\infty = \{n\in\bbz_+:\ \text{$\exists\ g(z)\in p(z)W$ vanishing to order exactly $n$ at $a$}\}.$$
There exists $m_a\in\bbz_+$ such that $r_{i,a} = m_a+i$ for all $i\gg0$ and
$$g(W) = \sum_{a\in\bbc} g_a(W),\ \ \text{where}\ \ g_a(W) = \sum_{i\geq 0} m_a+i-r_{i,a}.$$
\end{proposition}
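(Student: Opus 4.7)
The plan is to reduce both claims to a local analysis at each point of the affine spectral curve $X=\spec A_W$, and then invoke \cite[Thm.~8.1]{BW04}.

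\emph{Well-definedness and existence of $m_a$.} The set of polynomials $p(z)$ with $p(z)W\subseteq \bbc[z]$ is an ideal of $\bbc[z]$, so any two choices differ by a polynomial factor $u(z)$; multiplication by $u(z)$ shifts both $r_{i,a}$ and $m_a$ by $v_a(u)$, leaving each defect $m_a+i-r_{i,a}$ invariant, so $g_a(W)$ is independent of the choice. I may thus take $p(z)=q(z)$ with $W=q(z)^{-1}V_C$ for some finite-dimensional homogeneous $C\subset\mathscr C$, giving $p(z)W=V_C$. Fix $a\in\bbc$ and pick $N$ strictly larger than the order of every $\delta^{(k)}(z-b)$ appearing in any $\chi\in C$. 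For each $n\geq N$, the polynomial $(z-a)^n\prod_{b\in\mathrm{supp}(C)\setminus\{a\}}(z-b)^N$ is annihilated by every $\chi\in C$ and has order of vanishing exactly $n$ at $a$. Hence $\{r_{i,a}\}$ is cofinite in $\bbz_+$ and $m_a$ exists. If in addition $a\notin\mathrm{supp}(C)$ (and $p(a)\neq 0$, which can be arranged by enlarging $p$), then $\{r_{i,a}\}=\bbz_+$ outright, so $m_a=0$ and $g_a(W)=0$; the sum $\sum_a g_a(W)$ is therefore finite.

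\emph{Identification with the differential genus.} By Theorem~\ref{ident} and Section~\ref{2.2}, the plane $W$ corresponds canonically to the pair $(X,\mathcal L)=(\spec A_W,\widetilde W)$, and by Theorem~\ref{th3.7} its differential genus equals $g(W)=k$ where $W\in{\mathcal{CM}}_k$. The framing $\iota:\bba^1\to X$ embeds the completion $\widehat{\mathcal L}_a$ of $\mathcal L$ at each closed point $a$ into $\bbc((z-a))$, and its value semigroup equals a uniform shift of $\{r_{i,a}\}$. The integer $g_a(W)$ therefore coincides with the local delta invariant of $\widehat{\mathcal L}_a$: the codimension of its largest cyclic $\bbc[[z-a]]$-submodule. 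The remaining task is to express the global differential genus as a sum of these local delta invariants over the finitely many singularities of $X$, which is the content of \cite[Thm.~8.1]{BW04} and yields $g(W)=\sum_a g_a(W)$.

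The main obstacle is this last local-to-global identification. It requires the Berest--Wilson description of the stalk of the differential-operator sheaf on $\mathcal L$ at each singular point of $X$ as a deformation of the differential-operator algebra on the trivial bundle over the model curve $X_{g_a(W)}$, with the ``depth'' of the deformation controlled precisely by the gaps $m_a+i-r_{i,a}$ in the value semigroup of $\widehat{\mathcal L}_a$. Granting this local picture, the formula follows by summing over the singular points.
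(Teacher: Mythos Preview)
The paper does not prove this proposition at all: it is stated immediately after the sentence ``This number can be calculated directly from $W$ as follows (see Theorem 8.1 in \cite{BW04} and the discussion after it)'' and is offered purely as a citation of Berest--Wilson. Your proposal ultimately rests on the same citation, so the approaches agree; you simply supply the elementary verifications (well-definedness, existence of $m_a$, finiteness of the sum) that the paper leaves implicit.

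A few small corrections to your write-up. First, two admissible polynomials $p_1,p_2$ with $p_iW\subseteq\bbc[z]$ need \emph{not} differ by a polynomial factor; what is true is that the set of such $p$ is an ideal of $\bbc[z]$, so each $p_i$ is a multiple of the generator, and comparing both to the generator gives the independence you want. Second, the parenthetical ``$p(a)\neq 0$, which can be arranged by enlarging $p$'' is both unnecessary and backwards: with $p=q$ and $a\notin\mathrm{supp}(C)$ you automatically have $q(a)\neq 0$, and in any case multiplying $p$ by further polynomials can only \emph{add} zeros. Third, your description of $g_a(W)$ as ``the codimension of the largest cyclic $\bbc[[z-a]]$-submodule'' of $\widehat{\mathcal L}_a$ is not quite the correct local invariant (for instance, for value set $\{0,2,4,5,6,\dots\}$ one has $g_a=3$, not the gap count $2$); the quantity $\sum_i(m_a+i-r_{i,a})$ is the size of the partition attached to the local module in \cite{BW04}, and it is that identification, together with \cite[Thm.~8.1]{BW04}, that gives the global formula. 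Since you already defer the substantive step to \cite{BW04}, this does not affect the validity of your outline.
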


\begin{example}
\label{ex3.9}
Consider the point $W\in\Grad$ as in Example \ref{first family algebra} so that
\begin{align*}
z^2W = \Span\left\lbrace z^2, z^3 - \frac{3}{a}, z^4, z^5, \dots\right\rbrace.
\end{align*}
For all $a\in\bbc$ with $a\neq 0$ we have $r_{i,a} = i$ so that $g_a(W) = 0$.
Moreover $r_{0,0} = 0$, $r_{1,0} = 2$, and $r_{i,0} = i+2$ for $i>1$, so that we have
$$g(W) = g_0(W) = \sum_{i\geq 0} m_0+i-r_{i,0} = 3.$$
\end{example}

Our estimates for the size of the Fourier algebra in this section are based the relationship between the differential genus of $(X, \mathcal{L})$ and the Letzter--Makar-Limanov invariant \cite{LML} 
of $(X, \mathcal{L})$.  To define the latter, we first note the following result. Recall from \S \ref{3.2} that $\mathcal{D}_{\mathcal{L}}(X)$ is canonically 
embedded in $\mathcal{D}( \bbc(z)) \cong \bbc(z)[\partial_z]$ via the framing $\iota: \bba^1\rightarrow X$.
\begin{theorem}[Letzter--Makar-Limanov \cite{LML}] For each affine framed curve $X$ and a line bundle $\mathcal{L}$ on it 
the vector space
$$\{p(z)x^j: \text{$\exists R(z,\partial_z)\in {\mathcal{D}}_{\mathcal{L}}(X)$ with $\ord(R) = j$ and with leading coefficient $p(z)$}\},$$
has finite codimension in $\bbc[z,x]$.
\end{theorem}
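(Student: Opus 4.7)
The plan is to analyze the associated graded algebra $\mathrm{gr}\, \mathcal{D}_{\mathcal L}(X)$ with respect to the order filtration on $\mathcal{D}_{\mathcal L}(X)$, viewed as a subalgebra of $\bbc(z)[\partial_z]$ via the framing $\iota: \bba^1_\bbc \to X$. After identifying $p(z) x^j$ with the leading symbol of an order-$j$ operator with leading coefficient $p(z)$, the vector space in the theorem is precisely $\mathrm{gr}\, \mathcal{D}_{\mathcal L}(X) \cap \bbc[z, x]$, viewed as a graded subring of $\bbc[z, x]$, and the goal is to show this subring has finite codimension.

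First I would handle the order-zero part. By Theorem \ref{ident}, the degree-zero piece is $A_W$, and $A_W$ has finite codimension in $\bbc[z]$ by the definition of $\Grad$, since $q(z)^N \bbc[z] \subseteq A_W$ for $N$ sufficiently large. For higher orders, I would use the conductor ideal $\mathfrak{f} := \{f \in \bbc[z] : f \bbc[z] \subseteq A_W\}$, a nonzero ideal of $\bbc[z]$ of finite codimension. The crucial construction is: for every $j \geq 1$ and every $f \in \mathfrak{f}^{j+1}$, produce $R \in \mathcal{D}_{\mathcal L}(X)$ of order $j$ with leading coefficient $f$. Starting from the ansatz
$$R = f(z) \partial_z^j + \sum_{i=0}^{j-1} a_i(z) \partial_z^i,$$
the divisibility $f \in \mathfrak{f}^{j+1}$ ensures that $f(z) \partial_z^j$ already sends $W$ into a space whose denominators are controlled by those of $W$; the condition $R \cdot W \subseteq W$ then reduces, modulo a large power of the conductor, to a finite-dimensional linear system on the coefficients $a_i$. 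Solvability follows from the flexibility afforded by lower-order corrections, yielding $\mathfrak{f}^{j+1} x^j \subseteq \mathrm{gr}\, \mathcal{D}_{\mathcal L}(X)$ for all $j \geq 0$.

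To upgrade this to finite codimension in $\bbc[z, x]$, I would additionally use the bispectrality of $\psi_W$ to exhibit an operator $M(z, \partial_z) \in \mathcal{F}_z(\psi_W) = \mathcal{D}_{\mathcal L}(X)$ of some positive order with nontrivial polynomial leading coefficient. Combined with the order-zero multiplications by elements of $A_W$, the $A_W$-multiples and products of $M$ produce symbols whose leading coefficients are not constrained by the $\mathfrak{f}^{j+1}$ divisibility; together with the conductor argument, this forces $\mathrm{gr}\, \mathcal{D}_{\mathcal L}(X)$ and $\bbc[z, x]$ to share the same fraction field and makes $\bbc[z, x]$ a finitely generated $\mathrm{gr}\, \mathcal{D}_{\mathcal L}(X)$-module. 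Finite codimension in the sense of $\bbc$-vector spaces then follows from the graded structure and a standard Hilbert series comparison.

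The main obstacle is the correction step of the second paragraph: identifying the finite-dimensional obstruction space for the condition $R \cdot W \subseteq W$ and verifying that the lower-order coefficients $a_i$ in the ansatz provide enough freedom to cancel it. This requires detailed analysis of the action of $\partial_z^j$ on $W / \mathfrak{f}^N W$, using the distributional description $W = q(z)^{-1} V_C$ together with the local structure of $C$ at each of its support points.
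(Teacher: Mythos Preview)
The paper does not give its own proof of this statement: it is quoted as a result of Letzter--Makar-Limanov \cite{LML} and used as a black box to define the invariant $\LM(X,\mathcal L)$. So there is no proof in the paper to compare your proposal against; I will instead assess the proposal on its own merits.

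Your outline has the right shape (pass to the symbol algebra, use the conductor $\mathfrak f$, exhibit at least one operator of positive order), but there is a genuine gap in the finite-codimension step. The inclusion $\mathfrak f^{j+1} x^j \subseteq \mathrm{gr}\,\mathcal D_{\mathcal L}(X)$, even if you establish it, only bounds the codimension at level $j$ by $\dim(\bbc[z]/\mathfrak f^{j+1})$, which grows linearly in $j$; summing over $j$ gives infinity. Your third paragraph tries to repair this by invoking a single positive-order operator $M$ and then asserting that $\bbc[z,x]$ becomes a finitely generated module over $\mathrm{gr}\,\mathcal D_{\mathcal L}(X)$, with finite codimension following from a ``Hilbert series comparison.'' But neither of those two claims is justified: sharing a fraction field does not imply module-finiteness without an integrality or Noetherian argument, and a Hilbert series comparison does not by itself convert module-finiteness into finite $\bbc$-codimension. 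What actually needs to be shown is that the set of attainable leading coefficients at order $j$ has codimension in $\bbc[z]$ that is \emph{bounded uniformly in $j$} and eventually zero; this requires combining powers of $M$ with the conductor argument in a controlled way (e.g.\ showing $\mathfrak f^N x^j$ is attainable for a \emph{fixed} $N$ independent of $j$), and that uniform bound is precisely the content you have not supplied.

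A secondary point: you lean on Theorem~\ref{ident} and bispectrality of $\psi_W$ to produce $M$. Within the paper's logical order this is not circular (Theorem~\ref{ident} is proved independently), but note that the original result in \cite{LML} is for $\mathcal L=\mathcal O_X$ on an arbitrary rational affine curve and predates the bispectral framework entirely; a self-contained argument produces $M$ directly from the ring structure of $A_W$ (e.g.\ via a derivation on $A_W$ extended to $W$) rather than via wave functions.
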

\begin{definition}
The codimension from the previous theorem is called the \vocab{Letzter--Makar-Limanov invariant} $\LM(X, \mathcal{L})$ of $(X, \mathcal{L})$.
\end{definition}

The Letzter-Makar-Limanov invariant provides another way of measuring the differential genus of $W$.
\begin{theorem}[\cite{BW04} Theorem 8.6]
Let $W\in \Grad$ and $(X, \mathcal{L})$ be the associated pair of a framed curve and line bundle on it.
Then $g(W)= \LM(X, \mathcal{L}) /2$.
\end{theorem}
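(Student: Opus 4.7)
The plan is to combine the Berest--Wilson classification of differential operator algebras on line bundles over framed curves (Theorem 2.7(i)) with the Letzter--Makar-Limanov finiteness statement cited just before Definition \ref{3.12}, and to reduce the identity to a direct calculation on the model curves $X_n$.

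First, by Theorem 2.7(i) and the definition of the differential genus, the algebra $\mathcal D_{\mathcal L}(X)$ associated to $W \in \Grad$ with $g(W) = n$ is isomorphic as a filtered $\bbc$-algebra with its distinguished embedding in $\bbc(z)[\partial_z]$ (coming from the framing $\iota: \bba^1 \to X$) to $\mathcal D(X_n)$ in its standard presentation. The Letzter--Makar-Limanov invariant is defined purely in terms of the leading-symbol map $\mathcal D_{\mathcal L}(X) \to \bbc[z, x]$, which is preserved under this type of isomorphism of filtered framed algebras. Consequently $\LM(X, \mathcal L) = \LM(X_n, \mathcal O_{X_n})$, and the theorem reduces to the model identity $\LM(X_n, \mathcal O_{X_n}) = 2n$.

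Second, I would compute $\LM(X_n, \mathcal O_{X_n}) = 2n$ directly. Starting from the explicit description $A_n = \bbc \oplus z^{n+1}\bbc[z]$, whose gap set in $\bbc[z]$ consists of exactly the $n$ monomials $z, z^2, \ldots, z^n$, one analyzes which leading coefficients $p(z)x^j$ arise as symbols of operators $R = a_j(z)\partial_z^j + (\text{lower order})$ preserving $A_n$. An order-by-order analysis of the linear constraints on the coefficients $(a_j, a_{j-1}, \ldots, a_0)$ imposed by $R \cdot A_n \subseteq A_n$ yields the codimension count: order $0$ contributes $n$, coming from the gap set of $A_n \subset \bbc[z]$ via $\mathcal D^0(X_n) = A_n$, and the positive orders contribute an additional $n$ in aggregate, organized in a pattern dual to the gap set of $A_n$.

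The principal obstacle is carrying out the higher-order part of the codimension count precisely. The constraints on the leading coefficient $a_j$ at each order $j \geq 1$ couple with constraints on the lower-order coefficients $a_{j-1}, \ldots, a_0$ via the derivative action on the gap monomials of $A_n$, and one must determine which leading coefficients $p(z)$ admit a compatible completion to a valid element of $\mathcal D(X_n)$. A cleaner organization may use the Calogero--Moser matrix parametrization of $\mathcal{CM}_n$ together with Theorem \ref{th3.7} to write down enough explicit operators in $\mathcal D(X_n)$ to yield a matching upper bound on the codimension, with the lower bound following from a direct analysis of the gap sequence. Either route should exploit the intrinsic symmetry in the adjoint involution of $\Grad$, which is the structural reason behind the factor of two relating $g(W)$ to $\LM(X,\mathcal L)$.
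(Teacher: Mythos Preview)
The paper does not prove this theorem at all; it is quoted verbatim from Berest--Wilson \cite{BW04}, Theorem 8.6, and used as a black box in the proof of Theorem E. So there is no ``paper's own proof'' to compare against.

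That said, your proposed argument has a real gap. You assert that the Berest--Wilson isomorphism $\mathcal D_{\mathcal L}(X)\cong\mathcal D(X_n)$ preserves the leading-symbol map into $\bbc[z,x]$, and hence the Letzter--Makar-Limanov invariant. But the isomorphism in Theorem 2.7(i) is an \emph{abstract} algebra isomorphism; it is not asserted to be induced by an automorphism of the ambient ring $\bbc(z)[\partial_z]$, nor to respect the two separate gradings by order and by degree of leading coefficient. The LM invariant, as defined here, depends a priori on the embedding coming from the framing. Proving that it is actually an isomorphism invariant of the abstract algebra is precisely the nontrivial content of the Letzter--Makar-Limanov work underlying Theorem 2.7(ii), and is closely tied to what Berest--Wilson do in \cite{BW04}. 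So your reduction either assumes what is essentially the theorem you are trying to prove, or at minimum skips over the hardest step.

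Your direct computation of $\LM(X_n,\mathcal O_{X_n})=2n$ is a reasonable exercise and can be completed, but by itself it only handles the special points $W_n$ of Example 2.1, not an arbitrary $W\in\Grad$. To close the argument you would need an independent proof that $\LM$ is invariant under algebra isomorphism (or under the specific isomorphisms constructed by Berest--Wilson), which is where the substance of \cite{BW04}, \S 8 lies.
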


\begin{example}
Consider the point $W\in\Grad$ as in Example \ref{ex3.9}.
From our explicit expression for $\mathcal{D}(W)$ (which by Theorem \ref{ident} is identified with $\mathcal{D}_{\mathcal{L}}(X)$), 
we see that $\mathcal{D}(W)$ contains an operator of order $\ell$ with leading coefficient of degree $m$ for all $\ell\geq 0$ and 
$m\geq 0$ with $\ell+m\neq 1,3$ and therefore $\LM(X,\mathcal{L}) = 6$. This gives a second proof of the fact that 
$g(W) = 3$, verified by a different method in Example \ref{ex3.9}.
\end{example}
Recall the adjoint and bispectral involutions of $\Grad$ from \S \ref{1.3} and \S \ref{2.1}. 
\begin{proposition} 
\label{abW}
For all $W\in\Grad$,
\[
g(b W ) = g(a W ) = g(W). 
\]
\end{proposition}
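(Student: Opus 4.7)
My plan is to reduce both equalities to statements about an invariant of the associated algebra of differential operators, and then exploit natural symmetries. By Theorem \ref{th3.7} together with \cite[Thm 8.6]{BW04}, one has $g(W)=\tfrac{1}{2}\LM(X_W,\mathcal{L}_W)$, so it suffices to show that the pairs $(X_W,\mathcal{L}_W)$, $(X_{aW},\mathcal{L}_{aW})$ and $(X_{bW},\mathcal{L}_{bW})$ share a common Letzter--Makar-Limanov invariant. Thus the question becomes a matching of leading-term data of the algebras $\mathcal{D}_{\mathcal{L}_{?}}(X_?)$.

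For the bispectral involution, I would combine Theorem \ref{ident} with the canonical anti-isomorphism $b_\psi:\mathcal{F}_x(\psi_W)\to\mathcal{F}_z(\psi_W)$, noting that $\mathcal{F}_x(\psi_W)=\mathcal{D}(bW)$ and $\mathcal{F}_z(\psi_W)=\mathcal{D}(W)$. The essential point is that $b_\psi$ interchanges the order of a differential operator with the degree of its leading coefficient: using the asymptotics $\psi_W\sim e^{xz}$, an operator $S(x,\partial_x)\in\mathcal{F}_x(\psi_W)$ with leading term $x^m\partial_x^\ell$ is sent to an operator $b_\psi(S)\in\mathcal{F}_z(\psi_W)$ with leading term $z^\ell\partial_z^m$. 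Since $\LM$ is the codimension in $\bbc[z,x]$ of the span of the monomials $p_j(z)x^j$, with $p_j$ ranging over leading coefficients of operators of order $j$, reflection across the diagonal $(\ell,m)\leftrightarrow(m,\ell)$ preserves this codimension, giving $\LM(X_{bW},\mathcal{L}_{bW})=\LM(X_W,\mathcal{L}_W)$.

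For the adjoint involution, I would use the formula $\psi_{aW}=\tfrac{1}{\wt p(x)\wt q(z)}\wt P(x,\partial_x)^{\dag}\cdot e^{xz}$ recalled in \S\ref{2.1}, together with the definition $a(W)=\{f:\oint f(z)g(-z)\tfrac{dz}{2\pi i}=0\ \forall g\in W\}$. Let $\sigma$ denote the involution $\sigma f(z):=f(-z)$. An integration-by-parts argument with this twisted pairing gives the algebra anti-isomorphism
\begin{equation*}
\mathcal{D}(W)\longrightarrow \mathcal{D}(aW),\qquad D\longmapsto \sigma\circ D^{\dag}\circ\sigma.
\end{equation*}
Both $D\mapsto D^{\dag}$ and $D\mapsto \sigma D\sigma$ preserve the order of an operator and the degree of its leading coefficient (up to sign), so this map respects the filtration data entering the definition of $\LM$, forcing $\LM(X_{aW},\mathcal{L}_{aW})=\LM(X_W,\mathcal{L}_W)$.

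The main technical obstacle will be rigorously verifying the ``order $\leftrightarrow$ degree of leading coefficient" interchange under $b_\psi$, which requires controlling the leading asymptotics of $\psi_W$ coming from the presentation \eqref{psiW}--\eqref{psiW2}, and deriving the twisted adjoint identity $\mathcal{D}(aW)=\sigma\cdot\mathcal{D}(W)^{\dag}\cdot\sigma$ directly from the orthogonal-complement description of $a$. Once those leading-term bookkeeping statements are in hand, the conclusion $g(aW)=g(bW)=g(W)$ is immediate via the $\LM$-reduction above.
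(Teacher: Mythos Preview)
Your proposal is correct, and for the adjoint involution it is essentially the paper's argument: the paper simply states $\mathcal{D}(aW)=(\mathcal{D}(W))^\dag$ (your $\sigma$-conjugation is a harmless refinement coming from the $g(-z)$ in the pairing) and concludes equality of Letzter--Makar-Limanov invariants from the fact that the formal adjoint preserves both order and leading-coefficient degree.

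For the bispectral involution your route differs from the paper's. You argue directly that $b_\psi$ swaps order with degree of the leading coefficient, so the set of attainable pairs $(\ell,m)$ for $\mathcal{D}(W)$ is the reflection across the diagonal of the set for $\mathcal{D}(bW)$, whence their $\LM$ invariants agree. The paper avoids this ``order $\leftrightarrow$ degree'' bookkeeping entirely: it observes that the composite
\[
\mathcal{D}(aW)\xrightarrow{(\cdot)^\dag}\mathcal{D}(W)\xrightarrow{b_\psi}\mathcal{D}(bW)
\]
is an algebra \emph{isomorphism} (anti-isomorphism composed with anti-isomorphism), and since the differential genus is by definition an isomorphism invariant of the algebra, this gives $g(bW)=g(aW)$ immediately, which combined with the first step yields $g(bW)=g(W)$. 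The paper's argument is slicker and sidesteps exactly the technical obstacle you flag; your argument is more explicit and has the minor advantage of treating $a$ and $b$ independently rather than chaining through $aW$. The paper also notes a one-line alternative: both involutions preserve the Calogero--Moser strata $\mathcal{CM}_k$ (Wilson), so Theorem~\ref{th3.7} gives the result instantly.
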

\begin{proof} Denote by $(X, {\mathcal{L}})$ and $(a(X) , a({\mathcal{L}}))$ the pairs of a line bundle and framed curve
associated to $W$ and $a W $. By Theorem \ref{ident}, 
$\mathcal{D}_{\mathcal{L}}(X) = \mathcal{D} (W)$ and $\mathcal{D}_{a(\mathcal{L})}( a(X) ) = \mathcal{D}(a W )$.
It follows from \eqref{aW} that 
\[
\mathcal{D}(a W) = ( \mathcal{D}(W))^\dag
\]
where $(\cdot)^\dag$ denotes the formal adjoint of a differential operator with rational coefficients, as in \eqref{formal adjoint formula}. 
Therefore, $\LM(a(X) , a({\mathcal{L}})) = \LM (X, {\mathcal{L}})$, and hence, $g( a W) = g(W)$. 
The composition 
\[
\mathcal{D}(a W)\stackrel{(\cdot)^\dag}{\rightarrow} \mathcal{D}(W) \stackrel{b_\psi}{\rightarrow}\mathcal{D}(b W )
\]
is an algebra isomorphism, and using one more time Theorem \ref{ident} proves that $g(b W) = g(W)$.

Alternatively, the proposition also follows by using the facts that the adjoint and bispectral involutions 
preserve the strata ${\mathcal{CM}}_k$ of $\Grad$ (see \cite{Wilson-98}) and then applying Theorem \ref{th3.7}.
\end{proof}
We finally come to the proof of Theorem E part (i).
\medskip

\noindent
{\em{Proof of Theorem E part (i).}} Recall from \S \ref{2.1} that there exist differential operators $P(x,\partial_x)$, $\wt P(x,\partial_x)$,
$Q(z,\partial_z)$, $\wt Q(z,\partial_z)$ 
with polynomial coefficients and polynomials $p(x),\wt p(x),q(z),\wt q(z)$ such that
$$\psi_W(x,z) = \frac{1}{q(z)p(x)}P(x,\partial_x)\cdot \psi_{\exp}(x,z) = \frac{1}{q(z)p(x)}Q(z,\partial_z)\cdot \psi_{\exp}(x,z)$$
and
$$\wt P(x,\partial_x)\frac{1}{p(x)\wt p(x)}P(x,\partial_x) = q(\partial_z)\wt q(\partial_z)\ \ \text{and}\ \ 
\wt Q(z,\partial_z)\frac{1}{\wt q(z)q(z)}Q(z,\partial_z) = p(\partial_x)\wt p(\partial_x).$$
Without loss of generality, we choose the above presentation of $\psi_W$ so that the inequalities of \eqref{equalities} become equalities.

For any differential operator $L(x,\partial_x)$ with polynomial coefficients, we have
$$\frac{1}{p(x)}P(x,\partial_x)L(x,\partial_x)\wt P(x,\partial_x)\frac{1}{\wt p(x)}\cdot \psi_W(x,y) = \wt q(z)b_{\psi_{\exp}}(L(x,\partial_x))q(z)\cdot\psi_W(x,y).$$
Thus for all $\ell\geq 0$ and $m\geq \deg(W)+\deg(aW)$ the Fourier algebra $\mathcal F_z^{\ell,m}(\psi_W)$ contains a differential operator of order $\ell$ with leading coefficient a polynomial of degree $m$.

For each $\ell\geq 0$ and $m\geq 0$, let $R_{\ell,m}(z,\partial_z)\in\mathcal F_z(\psi_W)$ be a differential operator of order $\ell$ with leading coefficient a polynomial of degree $m$ (if it exists), chosen such that the order of $b_{\psi_W}^{-1}(R_{\ell,m})$ is as small as possible.
Note that the order of $b_{\psi_W}^{-1}(R_{\ell,m})$ is equal to the maximum of the degrees of the coefficients of $R_{\ell,m}(z,\partial_z)$ (as can be seen by the Laurent series expansion at $\infty$), and therefore the order of $b_{\psi_W}^{-1}(R_{\ell,m})$ is at least $m$.

We claim that for all $\ell,m$ if $\ord b_{\psi_W}^{-1}(R_{\ell,m})\neq m$, then $m\leq \deg(W)+\deg(aW)-1$.
To see this, suppose otherwise.
That is, let $R_{\ell,m}(z,\partial_z)$ be a differential operator with $m\geq \deg(W) + \deg(aW)$ and with $\wt m := \ord b_{\psi_W}^{-1}(R_{\ell,m})\neq m$.
Then we may write
$$R_{\ell,m}(z,\partial_z) = a_\ell(z)\partial_z^\ell + \sum_{j=1}^r c_jz^{\wt m}\partial_z^{k_j} + \sum_{j=0}^{\ell-1}a_j(z)\partial_z^j$$
for $a_\ell(z)$ a polynomial of degree $m$, $c_1,\dots, c_r\in\bbc$ and $a_0(z),\dots,a_{\ell-1}(z)\in \bbc(z)$ with $\deg(a_j(z)) < \wt m$.
For each $j$ let $S_j(z,\partial_z)\in \mathcal F_z(\psi_W)$ be an operator of order $k_j$ with leading coefficient a monic polynomial of degree $\wt m$, which exists by the fact that $\wt m\geq m+1 = \deg(W)+\deg(aW)$.
Then the operator
$$\wt R(z,\partial_z) := R_{\ell,m}(z,\partial_z) - \sum_{j=1}^r c_jS_j(z,\partial_z)$$
has order $\ell$ with leading coefficient of degree $m$, but the order of $b_{\psi_W}^{-1}(\wt R)$ is strictly less that $\wt m$.
This contradicts the definition of $R_{\ell,m}(z,\partial_z)$ and proves our claim.

Thus for all $\ell\geq 0$ and $m\geq \deg(W)+\deg(aW)$ we have 
\begin{align*}
\dim\mathcal F_z^{\ell,m}(\psi_W)
  & \geq \dim \Span\{R_{j,k}(z,\partial_z): 0\leq j\leq \ell,\ 0\leq k\leq m\}\\
  & \geq (\ell + 1)(m+1) - \LM(X, \mathcal{L})\\
  & = (\ell + 1)(m+1) - 2g(W).
\end{align*}
This argument with $W$ replaced by $bW$ gives us the same estimation for $m\geq 0$ and $\ell\geq \deg(bW) + \deg(abW)$.
Finally, since the order of $b_{\psi_W}^{-1}(R_{\ell,m})$ is at least $m$ we obtain that
$$\mathcal F_z^{\ell,m}(\psi_W)\subseteq \Span\{R_{j,k}(z,\partial_z): 0\leq j\leq \ell,\ 0\leq k\leq m\},$$
and therefore, for $\ell\geq \deg(W) + \deg(aW)$ and $m\geq \deg(bW) + \deg(abW)$, we have
$$\dim\mathcal F_z^{\ell,m}(\psi_W) \leq \dim\Span\{R_{j,k}(z,\partial_z): 0\leq j\leq \ell,\ 0\leq k\leq m\} = (\ell+1)(m+1)-2g(W).$$
This completes the proof of Theorem E part (i).
\qed
\medskip

\noindent
{\em{Proof of Theorem E part (ii).}}
We will prove that 
\begin{equation}
\label{ineq1}
g(W)\leq \deg(W)\deg(aW).
\end{equation}
The inequality $g(W) \leq \deg(bW)\deg(abW)$ follows from it and an application of the bispectral map. 

There exists $P(x,\partial_x),\wt P(x,\partial_x)\in\bbc[x,\partial_x]$,  $p(x),\wt p(x)\in\bbc[x]$, and $\wt q(z),q(z)\in\bbc[z]$ such that
$$\psi_W(x,z) = \frac{1}{q(z)p(x)}P(x,\partial_x)\cdot \psi_{\exp},\ \ \wt P(x,\partial_x)\frac{1}{\wt p(x)p(x)}P(x,\partial_x) = g(\partial_x) := q(\partial_x)\wt q(\partial_x).$$
The orders $m$ and $\wt m$ of $P(x,\partial_x)$ and $\wt P(x,\partial_x)$ are the degree of $W$ and $aW$, respectively for $W\in \Grad$ associated to $\psi_W(x,z)$.
Explicitly we can write
$$q(z)W = V_C = \{f(z)\in\bbc[z]: \langle f(z),\chi(z)\rangle = 0\ \forall \chi\in C\}$$
where we have
$$C= \{\chi: \langle e^{xz},\chi(z)\rangle\in\ker P(x,\partial_x)\}.$$
Since $P(x,\partial_x)$ right divides $g(\partial_x)$, we know that
$$C\subseteq \{\chi: \langle e^{xz},\chi(z)\rangle\in\ker g(\partial_x)\} = \{\delta^{(k)}(z-a):a\in\bbc,\ (z-a)^k\ \text{divides}\ g(z) \}.$$
This decomposes as $C = \bigoplus_{g(a)=0} C_a$ with
$$C_a = C\cap \mathscr C_a \subseteq \{\delta^{(k)}(z-a): (z-a)^k\ \text{divides}\ g(z) \}.$$
For each root $a$ of $g(z)$ let $n_a$ be the multiplicity and $m_a = \dim C_a$.
Then
$$g_a(W) = \sum_{i\geq 0} m_a + i-r_{i,a} \leq \sum_{i=0}^{n_a-m_a-1} m_a+i-i = (n_a-m_a)m_a.$$
Thus
$$g(W) = \sum_{g(a)=0} g_a(\psi)\leq \sum_{g(a)=0} (n_a-m_a)m_a \leq \left(\sum_{g(a)=0}(n_a-m_a)\right)\left(\sum_{g(a)=0}m_a\right) = \wt mm$$
This completes the proof.
\qed
%\begin{theorem}
%Suppose that $\psi(x,z)$ is a (normalized) rank $1$ bispectral meromorphic function corresponding to $W\in\Grad$ with $aW=W$.
%Then for all $m\geq 2d_1$ we have
%$$\dim\mathcal F_z^{\ell,m}(\psi)\geq (\ell+1)(m+1)-2\min(d_1,d_2)^2.$$
%\end{theorem}
\section{Reflective integral operators and commuting pairs}\label{sec:Thm A-D}
\subsection{Time-band limiting operators for generalized Laplace and Fourier transforms}
Throughout this section, let $\tau>0$ and $\psi_W(x,y)$ be a rank $1$ bispectral meromorphic function corresponding to a point $W\in\Grad$, whose poles lie in the complement of $[\tau,\infty)\times[\tau,\infty)$.
For any integer $\ell>0$, we define integral operators $L_W$ and $F_W$ on the Sobolev space $W^{\ell,2}([\tau,\infty))$ generalizing the Laplace and Fourier transforms, along with their adjoints $L_W^*,F_W^*$ by
\begin{align*}
L_W: \ \ f(x)\mapsto \int_\tau^\infty \psi_W(y,-z)f(y)dy,\ \ &L_W^*: f(z)\mapsto \int_\tau^\infty \ol{\psi(x,-w)}f(w)dw\\
F_W: \ \ f(x)\mapsto \int_\tau^\infty \psi_W(y,iz)f(y)dx,\ \ &F_W^*: f(z)\mapsto \int_\tau^\infty \ol{\psi_W(x,iw)}f(w)dw
\end{align*}
As we remarked in the introduction, when $\tau = 0$ and $W = \bbc[z]$ the operator $L_W$ is precisely the Laplace transform.
The proof of the following proposition is simple and is left to the reader.
\begin{proposition}
The linear maps $F_W, F_W^*, L_W$ and $ L_W^*$ are bounded operators on $W^{\ell,2}([\tau,\infty))$.
\end{proposition}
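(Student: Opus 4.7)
The plan is to split the four operators into two analogous groups. The Laplace-type operators $L_W$ and $L_W^*$ carry the decaying factor $e^{-yz}$ on $[\tau,\infty)^2$, and will be shown to be Hilbert--Schmidt together with all their $z$-derivatives up to order $\ell$. The Fourier-type operators $F_W$ and $F_W^*$ carry only an oscillatory factor $e^{iyz}$, and will be handled by integration by parts in $y$ combined with Plancherel's theorem. Throughout I use the representation
\begin{equation*}
\psi_W(y,z) = \frac{1}{p(y)q(z)}P(y,\partial_y) e^{yz} = \frac{h(y,z)}{p(y)q(z)}e^{yz}
\end{equation*}
from \eqref{psiW}, together with the KP normalization $\psi_W(y,z) e^{-yz} \to 1$ as $y,z\to\infty$, which forces $\deg_z h = \deg q$ and $\ord P = \deg q$. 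I sketch the arguments for $L_W$ and $F_W$ only; the adjoints $L_W^*$ and $F_W^*$ are treated identically since $\ol{\psi_W(y,-z)}$ and $\ol{\psi_W(y,iz)}$ have the same analytic structure.

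For $L_W$, the product rule shows that each $\partial_z^j \psi_W(y,-z)$ with $0 \leq j \leq \ell$ is a finite sum of terms of the form (polynomial in $y$)$\,\cdot\,$(rational function in $z$, regular on $[\tau,\infty)$)$\,\cdot\,e^{-yz}$. Since $y,z \geq \tau > 0$, the exponential $e^{-yz}$ dominates all polynomial growth in $y$ and all rational behavior in $z$, so each such function lies in $L^2([\tau,\infty)^2)$. Thus every operator $\partial_z^j \circ L_W : L^2 \to L^2$ is Hilbert--Schmidt, summing for $j = 0,\dots,\ell$ gives a bounded operator $L_W : L^2([\tau,\infty)) \to W^{\ell,2}([\tau,\infty))$, and composing with the continuous inclusion $W^{\ell,2} \hookrightarrow L^2$ proves boundedness on $W^{\ell,2}$.

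For $F_W$, set $n = \ord P = \deg q$. Substituting the representation of $\psi_W$ into the definition of $F_W$ and integrating by parts $n$ times in $y$ yields
\begin{equation*}
q(iz)(F_W f)(z) = \int_\tau^\infty e^{iyz}\,P^\dag(y,\partial_y)\!\left[\frac{f(y)}{p(y)}\right] dy + B(z),
\end{equation*}
where boundary contributions at $y = \infty$ vanish by Sobolev embedding (given sufficient regularity of $f$), and $B(z) = \sum_{k<n} c_k (iz)^k e^{i\tau z}$ collects the contributions at $y = \tau$. The remaining $y$-integral is the Fourier transform of an $L^2$-function (extended by zero), hence lies in $L^2(\mathbb R)$ by Plancherel; division by $q(iz)$, whose modulus is bounded below on $[\tau,\infty)$ because the poles of $\psi_W$ are excluded, preserves the $L^2$-bound. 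Since $k < n = \deg q$, each term $(iz)^k/q(iz)$ decays at least like $1/z$, so $B/q(iz) \in L^2([\tau,\infty))$. Taking further $z$-derivatives brings down additional factors of $y$ under the integral and polynomial factors in $z$ outside; these are absorbed by extra integrations by parts, at the cost of more Sobolev regularity of $f$, and the resulting estimates still yield boundedness on $W^{\ell,2}$.

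The main obstacle is the Fourier case: the kernel is not absolutely integrable, so the $L^2$-bound must come from oscillation-induced cancellations rather than direct kernel estimates. The KP degree normalization $\ord P \leq \deg q$ is essential for keeping the boundary contributions $(iz)^k/q(iz)$ in $L^2([\tau,\infty))$.
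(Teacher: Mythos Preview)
The paper itself omits the proof entirely (``simple and left to the reader''), so there is no argument to compare against; what follows is an assessment of your sketch on its own terms.

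Your treatment of $L_W$ and $L_W^*$ is correct: the exponential factor $e^{-yz}$ with $y,z\geq\tau>0$ dominates every polynomial weight, so each $\partial_z^j\circ L_W$ has a square-integrable kernel and is Hilbert--Schmidt.

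The Fourier case has a genuine gap. Your integration-by-parts formula $q(iz)(F_Wf)(z)=\int e^{iyz}P^\dag[f/p]\,dy + B(z)$ already consumes $n=\mathrm{ord}\,P$ derivatives of $f$ just to land in $L^2$, and every further $z$-derivative brings down a factor of $y$ which you propose to remove by yet another integration by parts. Thus to place $F_Wf$ in $W^{\ell,2}$ your scheme needs roughly $n+\ell$ derivatives of $f$, not $\ell$; the sentence ``at the cost of more Sobolev regularity of $f$, and the resulting estimates still yield boundedness on $W^{\ell,2}$'' is therefore circular. A second, smaller issue: for the boundary terms at $y=\infty$ to vanish you need $a_m(y)/p(y)$ bounded, i.e.\ $\deg a_m\leq\deg p$ for each coefficient of $P$. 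This is true (it follows from the normalization $h(y,z)/(p(y)q(z))\to 1$ as $y\to\infty$, which forces the $y$-degree of each $z$-coefficient of $h$ to be at most $\deg p$), but you should say so. A cleaner route that avoids the derivative-counting problem is to write $\psi_W(y,iz)=e^{iyz}\bigl(1+\sum_{k<n}(iz)^k c_k(y)/(q(iz)p(y))\bigr)$ with each $c_k/p$ bounded and smooth on $[\tau,\infty)$; this reduces $F_W$ to the half-line transform $f\mapsto\int_\tau^\infty e^{iyz}f(y)\,dy$ composed with bounded multipliers, so the whole question becomes whether that base transform maps $W^{\ell,2}([\tau,\infty))$ to itself. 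Even that last step is not entirely obvious for general $\ell$ and deserves an explicit argument rather than a gesture at Plancherel.
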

%\begin{proof}
%The statement is clear for $ L_W$ and $ L^*_W$, so we need only prove it for $ F_W$ and $ F^*_W$.
%Note that $\psi_W(x,y) = e^{xy}\frac{h(x,y)}{p(x)q(y)}$ for some polynomials $p(x)\in\bbc[x],q(y)\in\bbc[y]$ 
%and $h(x,y)\in\bbc[x,y]$, where the roots of $p(x)$ and $q(y)$ lie outside of $[\tau,\infty)$.
%Moreover, since $\psi_W(x,y)$ is normalized $\lim_{x\rightarrow\infty} h(x,y)/p(x) = 1$ and $\lim_{y\rightarrow\infty} h(x,y)/q(y) = 1$.
%In particular, we may write
%$$h(x,y) = \sum_{i=0}^m\sum_{j=0}^n h_{ij}x^iy^j$$
%for some constants $h_{ij}\in\bbc$ with $m=\deg(p(x))$ and $n=\deg(q(y))$.
%Then for $\wh\cdot$ denoting the Fourier transform, we have for all $f(x)\in W^{\ell,2}([\tau,\infty))$
%\begin{align*}
%\left(\int_\tau^\infty |(F_W f)(z)|^2 dz\right)^{1/2}
% & \leq \sum_{ij} |h_{ij}|\left(\int_\tau^\infty \frac{|y|^{2j}}{|q(y)|^2}\left|\left[\frac{x^i}{p(x)}f(x)\chi_{[\tau,\infty)}(x)\right]^\wedge(y)\right|^2dy\right)^{1/2}\\
% & \leq \sum_{ij} |h_{ij}|\left(\sup_{y\geq\tau}\frac{|y|^j}{|q(y)|}\right)\|\left(\frac{x^i}{p(x)}f(x)\chi_{[\tau,\infty)}(x)\right)^\wedge\|_2\\
% & \leq \sum_{ij} |h_{ij}|\left(\sup_{y\geq\tau}\frac{|y|^j}{|q(y)|}\right)\left(\sup_{x\geq\tau} \frac{|x|^i}{|p(x)|}\right)\|f\|_2.
%\end{align*}
%Similar inequalities hold for all the weak derivatives.
%Hence $F_W$ defines a bounded operator on $W^{\ell,2}([\tau,\infty))$.
%The proof for $F_W^*$ is similar.
%\end{proof}

Fix $r,t\in[\tau,\infty)$.
The time and band-limiting operators of $L_W$ and $F_W$ above are the operators
$E_W,\mathcal E_W :L^2([\tau,\infty ))\to L^2([\tau,\infty ))$ given by 
$$
\mathcal E_W(f(x))(z) = \chi_{[t,\infty)}(z)L_W(\chi_{[r,\infty)}(x)f(x)),\ \ E_W(f(x))(z) = \chi_{[t,\infty)}(z)F_W(\chi_{[r,\infty)}(x)f(x)).
$$
The corresponding adjoints are operators $L^2([t,\infty))\rightarrow L^2([r,\infty))$ defined similarly, using the adjoints $L_W^*$ and $F_W^*$.
Here the multiplication by the characteristic functions $\chi_{[s,\infty)}$ and $\chi_{[t,\infty)}$ corresponds to the time-limiting operator and the band-limiting operator, respectively.

%Motivated by time and band limiting, we consider the compositions
%$$H = \chi_{[t,\infty)}\mathscr L\chi_{[s,\infty)}\mathscr L^*\chi_{[t,\infty)}\ \ \text{and}\ \ I = \chi_{[t,\infty)}\mathscr F\chi_{[s,\infty)}\mathscr F^*\chi_{[t,\infty)}.$$
In time and band-limiting, we consider the self-adjoint operators $\mathcal E_W\mathcal E_W^*$ and $E_WE_W^*$ as in \eqref{tblimit laplace equation} and \eqref{tblimit fourier equation}, with the specific goal of computing their eigenfunctions.
As mentioned in the introduction, the operator $\mathcal E_W\mathcal E_W^*$ has a kernel-type operator expression
\begin{equation}
(\mathcal E_W\mathcal E_W^*f)(z) = \chi_{[t,\infty)}(z)\int_t^\infty K_W(z,w)f(w)dy,\ \ K_W(z,w) = \int_r^\infty \psi_W(x,-z)\ol{\psi_W(x,-w)}dx.
\end{equation}
However, a similar expression for $E_WE_W^*$ has to be understood as a distribution in the sense of Laurent Schwartz, see \cite{DK10,GS64,H03}. 
The reader may benefit from reading the introduction to \cite{S66}. 
It is remarkable that M. Sato's theory of Hyperfunctions deals with many of the same problems addressed by distribution theory, see \cite{KKK86}.

In time and band-limiting, the strategy for finding eigenfunctions of the above operator is to determine a differential operator commuting with the integral operator.
In such a case, the corresponding eigenfunctions are obtained by solving numerically the associated differential equation.
The behavior of a differential operator when composed with an integral operator is described by means of integration by parts, giving rise naturally to the bilinear concomitant.
For a differential operator $P=P(x,\partial_x) := \sum_{j=0}^m a_j(x)\partial_x^j$ the bilinear concomitant is defined by
\begin{align}
\mathcal C_{P}(f,g;p)
  & = \sum_{j=1}^m \sum_{k=0}^{j-1} (-1)^k f^{(j-1-k)}(x)(a_j(x)g(x))^{(k)}|_{x=p}\nonumber\\
  & = \sum_{j=1}^m \sum_{k=0}^{j-1}\sum_{\ell=0}^k\binom{k}{\ell} (-1)^k f^{(j-1-k)}(x)a_j(x)^{(k-\ell)}g(x)^{(\ell)}|_{x=p},
\end{align}
where here $p\in\bbc$ and $f,g$ are two functions which are sufficiently smooth in a neighborhood of $p$.
The bilinear concomitant generalizes the formula for integration by parts in that
\begin{equation}\label{integral biconcomitant}
\int_{x_0}^{x_1}[P(x,\partial_x)\cdot f(x))g(x)-f(x)(P(x,\partial_x)^\dag\cdot g(x))] dx = \mathcal C_{P}(f,g;x_1)-\mathcal C_{P}(f,g;x_0),
\end{equation}
for any functions $f(x)$ and $g(x)$ which are sufficiently smooth and integrable.

\subsection{Reflective integral operators}
We define an operator $T_W$, related to the integral operator $\mathcal E_W\mathcal E_W^*$ in the previous section by
$$T_W: f(z) \mapsto \int_{\Gamma_1}\int_{\Gamma_2} \psi_W(x,-z)\psi_W^\dag(x,-{\xi})f({\xi})d\xi dx,$$
where here $\psi_W^\dag := a\psi_W$.
This may be rewritten as
$$T_W: f(z) \mapsto\int_{\Gamma_2} K_W(z,{\xi})f({\xi})d\xi ,\ \ K_W(x,{\xi}) = \int_{\Gamma_1} \psi_W(x,-z)\psi_W^\dag(x,-{\xi}) dx.$$
As we demonstrate below, the above integral operator reflectively commutes with a nonconstant differential operator, i.e. we have
$$R(z,\partial_z)\cdot T_W(f({\xi}))(z) = T_W(R(-\xi,-\partial_{\xi})\cdot f({\xi}))(z).$$
Moreover, in the specific case that $a\psi_W = c\psi_W$ we have $T_W=\mathcal E_W\mathcal E_W^*$ and $R(-z,-\partial_z) = R(z,\partial_z)^\dag$, so that $T$ maps the eigenfunctions of $R(z,\partial_z)^\dag$ to those of $R(z,\partial_z)$. 

\begin{proposition}\label{reflect with the kernel}
Let $\Gamma_1\in\bbc$ be a path from $r$ to $\infty$ whose real values are bounded below and imaginary values are bounded, and such that the poles in $x$ of $\psi_W(x,-z)$ 
avoid $\Gamma_1$.
Consider the function
$$K_W(z,{\xi}) := \int_{\Gamma_1} \psi_W(x,-z)\psi_W^\dag(x,-{\xi}) dx.$$
If $R = R(z,\partial_z)\in\mathcal F_z(\psi_W)$  satisfies $\mathcal C_{b_{\psi_W^{-1}}R}(f,g;r) = 0$ for all smooth functions $f(x),g(x)$ in a neighborhood of $r$, then
$$R(-z,-\partial_z)\cdot K_W(z,{\xi}) = R(\xi,\partial_{\xi})^\dag\cdot K_W(z,{\xi}).$$
\end{proposition}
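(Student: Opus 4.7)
The plan is to use the generalized Fourier map to convert the $z$-action of $R(-z,-\partial_z)$ into an $x$-action, then integrate by parts in $x$ to transfer that action across to the factor $\psi_W^\dag(x,-\xi)$. First, set $S := b_{\psi_W}^{-1}(R) \in \mathcal F_x(\psi_W)$, so by definition $S(x,\partial_x)\psi_W(x,z) = R(z,\partial_z)\psi_W(x,z)$; substituting $z\mapsto -z$ yields $S(x,\partial_x)\psi_W(x,-z) = R(-z,-\partial_z)\psi_W(x,-z)$. Since $R(-z,-\partial_z)$ acts only in $z$, it passes under the integral defining $K_W(z,\xi)$ and gives
\[
R(-z,-\partial_z)K_W(z,\xi) = \int_{\Gamma_1}\bigl[S(x,\partial_x)\psi_W(x,-z)\bigr]\,\psi_W^\dag(x,-\xi)\,dx.
\]

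Next I would apply the bilinear-concomitant formula \eqref{integral biconcomitant} along $\Gamma_1$ to obtain
\[
R(-z,-\partial_z)K_W(z,\xi)
= \int_{\Gamma_1}\!\psi_W(x,-z)\,S(x,\partial_x)^\dag\psi_W^\dag(x,-\xi)\,dx
+ \mathcal C_{S}\bigl(\psi_W(\cdot,-z),\psi_W^\dag(\cdot,-\xi);\,\cdot\bigr)\Big|_r^\infty.
\]
The boundary contribution at $r$ vanishes by the hypothesis on the concomitant. The contribution at $\infty$ will vanish because $\psi_W(x,-z)$ and $\psi_W^\dag(x,-\xi)$ are rational-in-$x$ multiples of $e^{-xz}$ and $e^{-x\xi}$ respectively, so the integrand together with all of its finite-order $x$-derivatives decays exponentially as $x\to\infty$ along $\Gamma_1$ whenever $\Re(z+\xi)>0$; the identity then extends to the full parameter region by analytic continuation of both sides.

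The decisive remaining step, and the one I expect to be the main obstacle, is the adjoint Fourier identity
\[
S(x,\partial_x)^\dag\,\psi_W^\dag(x,-\xi) \;=\; R(\xi,\partial_\xi)^\dag\,\psi_W^\dag(x,-\xi);
\]
granted this, $R(\xi,\partial_\xi)^\dag$ pulls outside the $x$-integral and produces $R(\xi,\partial_\xi)^\dag K_W(z,\xi)$, completing the argument. To establish the identity, I would work through the Darboux presentations \eqref{psiW}--\eqref{psiW2}: the relation $S\psi_W=R\psi_W$ transports across the intertwiner $\tilde P\cdot\tfrac{1}{\tilde p\, p}\cdot P = q(\partial_z)\tilde q(\partial_z)$ to an equivalent operator identity evaluated on $\psi_{\exp}$; taking the formal $x$-adjoint and then pushing back through $\tilde P^\dag$, via $\psi_W^\dag = \tfrac{1}{\tilde p\,\tilde q}\tilde P^\dag\psi_{\exp}$, produces the analogous identity on $\psi_W^\dag$. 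The base case $\psi_W=\psi_{\exp}$ is a direct Weyl-algebra check, using that $b_{\psi_{\exp}}$ is the antiisomorphism $x\leftrightarrow\partial_z,\ \partial_x\leftrightarrow z$, which intertwines the formal $x$-adjoint with the operation $R(z,\partial_z)\mapsto R(-z,-\partial_z)^\dag$ on the $z$-side; the substitution $z\mapsto -\xi$ then carries $R(-z,-\partial_z)^\dag$ to $R(\xi,\partial_\xi)^\dag$. This algebraic sign flip is precisely the reason \emph{reflective} rather than plain commutativity emerges as the natural notion here.
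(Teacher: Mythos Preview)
Your proposal is correct and follows essentially the same route as the paper: pass $R(-z,-\partial_z)$ under the integral, convert to the $x$-operator $S=b_{\psi_W}^{-1}(R)$, integrate by parts using the concomitant hypothesis, and then reinterpret $S^\dag$ acting on $\psi_W^\dag(x,-\xi)$ as a $\xi$-operator. The only notable difference is in the justification of the adjoint Fourier identity $S^\dag\psi_W^\dag(x,-\xi)=R(\xi,\partial_\xi)^\dag\psi_W^\dag(x,-\xi)$: the paper simply records the structural formula $b_{a\psi_W}\bigl((b_{\psi_W}^{-1}R)^\dag\bigr)(\xi,\partial_\xi)=R(-\xi,-\partial_\xi)^\dag$ (a general compatibility between the adjoint involution $a$ and the Fourier maps, implicit in Wilson's work and in the identity $\mathcal D(aW)=\mathcal D(W)^\dag$ used in Proposition~\ref{abW}), whereas you propose to rederive it by transporting through the Darboux intertwiners \eqref{psiW}--\eqref{psiW2}. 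Your route works but is more laborious; the paper's one-line invocation is cleaner and makes the role of the involution $a$ explicit. Your treatment of the boundary term at infinity is more detailed than the paper's, which simply absorbs it into ``integration by parts.''
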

\begin{proof}
First note that
$$b_{a\psi_W}((b_{\psi_W}^{-1}(R))^\dag)(\xi,\partial_{\xi}) = R(-\xi,-\partial_{\xi})^\dag.$$
By integration by parts
\begin{align*}
R(-z,-\partial_z)\cdot K_W(z,{\xi})
  & = \int_{\Gamma_1} R(-z,-\partial_z)\cdot\psi_W(x,-z)\psi_W^\dag(x,-{\xi}) dx\\
  & = \int_{\Gamma_1} b_{\psi_W}^{-1}(R)(x,\partial_x)\cdot\psi_W(x,-z)\psi_W^\dag(x,-{\xi}) dx\\
  & = \int_{\Gamma_1} \psi_W(x,-z)b_{\psi_W}^{-1}(R)(x,\partial_x)^\dag\cdot\psi_W^\dag(x,-{\xi}) dx\\
  & = \int_{\Gamma_1} \psi_W(x,-z)b_{a\psi_W}((b_\psi^{-1}(R))^\dag)(-\xi,-\partial_{\xi})\cdot\psi_W^\dag(x,-\xi) dx\\
  & = b_{a\psi_W}((b_{\psi_W}^{-1}(R))^\dag)(-\xi,-\partial_{\xi})\cdot K_W(z,{\xi})\\
  & = R^\dag(\xi,\partial_{\xi})\cdot K_W(z,{\xi})
\end{align*}
\end{proof}

\begin{proposition}\label{reflecting prop}
Let $\Gamma_1$ and $\Gamma_2$ be two paths in $\bbc$ going from $r$ and $t$, respectively to $\infty$.
Assume moreover that the real part of $\Gamma_i$ is bounded below and the imaginary part is bounded, and that the poles of $\psi_W(x,-z)$ and $\psi_W^\dag(x,-z)$ avoid $\Gamma_1\times\Gamma_2$.
If $R = R(z,\partial_z)\in\mathcal F_z(\psi)$ is an operator of order $\ell$ satisfying $\mathcal C_{b_\psi^{-1}R}(f,g;r) = 0$ and also $\mathcal C_{R(z,\partial_z)}(f,g;t) = 0$ 
for all smooth functions $f(x),g(x)$ in a neighborhood of $r$ or $t$, then for all $f\in W^{\ell+1,2}(\Gamma_2)$ with compact support we have
$$R(-z,-\partial_z)\cdot T_W(f({\xi}))(z) = T_W(R(\xi,\partial_{\xi})\cdot f({\xi}))(z).$$
\end{proposition}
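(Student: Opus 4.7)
The plan is to apply the differential operator $R(-z,-\partial_z)$ to $T_W(f)(z)$, push it inside the $\xi$-integral defining $T_W$, use Proposition \ref{reflect with the kernel} to convert its action on the kernel $K_W(z,\xi)$ into the action of $R(\xi,\partial_\xi)^\dag$ in the $\xi$-variable, and then integrate by parts along $\Gamma_2$ to move $R(\xi,\partial_\xi)^\dag$ onto $f$, producing $R(\xi,\partial_\xi)\cdot f$. The boundary terms in the integration by parts then need to be shown to vanish.

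First I would verify that the differentiation may be brought under the $\xi$-integral. Since $f\in W^{\ell+1,2}(\Gamma_2)$ has compact support, the Sobolev embedding in one dimension implies $f\in C^\ell$, so the biconcomitant below involves only pointwise-defined quantities. The kernel $K_W(z,\xi)$ is smooth in a neighborhood of the relevant region by the pole-avoidance hypothesis on $\Gamma_1\times\Gamma_2$ and by the bounded-imaginary-part/bounded-below-real-part conditions on the contours, which guarantee that the defining integral for $K_W$ converges absolutely and may be differentiated freely in $z$. Dominated convergence then yields
\begin{align*}
R(-z,-\partial_z)\cdot T_W(f)(z)
 &= \int_{\Gamma_2}\bigl[R(-z,-\partial_z)\cdot K_W(z,\xi)\bigr]f(\xi)\,d\xi\\
 &= \int_{\Gamma_2}\bigl[R(\xi,\partial_\xi)^\dag\cdot K_W(z,\xi)\bigr]f(\xi)\,d\xi,
\end{align*}
where the second equality is Proposition \ref{reflect with the kernel}, whose hypothesis $\mathcal{C}_{b_{\psi_W}^{-1}R}(\cdot,\cdot;r)=0$ is among the hypotheses of the present proposition.

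Next I would invoke the integration-by-parts identity \eqref{integral biconcomitant} along $\Gamma_2$, taking $P=R(\xi,\partial_\xi)$ (so $P^\dag=R(\xi,\partial_\xi)^\dag$) and pairing the smooth functions $f$ and $K_W(z,\cdot)$ in the roles of $f$ and $g$ respectively. This yields
\begin{align*}
\int_{\Gamma_2}\bigl[R(\xi,\partial_\xi)^\dag K_W(z,\xi)\bigr]f(\xi)\,d\xi
 &= \int_{\Gamma_2}K_W(z,\xi)\bigl[R(\xi,\partial_\xi)f(\xi)\bigr]d\xi \\
 &\quad-\mathcal{C}_R(f,K_W(z,\cdot);\infty)+\mathcal{C}_R(f,K_W(z,\cdot);t).
\end{align*}
The first integral on the right is by definition $T_W(R(\xi,\partial_\xi)f)(z)$. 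The boundary term at infinity vanishes because $f$ has compact support, killing every derivative $f^{(k)}$ appearing in $\mathcal{C}_R$. The boundary term at $t$ vanishes by the second hypothesis $\mathcal{C}_{R(z,\partial_z)}(\cdot,\cdot;t)=0$ applied to the smooth functions $f$ and $K_W(z,\cdot)$ near $t$.

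The main obstacle is analytic rather than algebraic: one must justify (i) the interchange of the differential operator $R(-z,-\partial_z)$ with the $\xi$-integral, and (ii) the application of \eqref{integral biconcomitant} along the complex contour $\Gamma_2$ rather than a real interval. Point (i) reduces to locally uniform bounds on $\partial_z^k K_W(z,\xi)$ for $k\leq \mathrm{ord}\,R$, which follow from the explicit form $K_W(z,\xi)=\int_{\Gamma_1}\psi_W(x,-z)\psi_W^\dag(x,-\xi)dx$ combined with the pole-avoidance and contour-growth conditions. Point (ii) is routine: because $\mathcal{C}_R(f,g;\xi)$ is a primitive of $(Rf)g-f(R^\dag g)$ in the $\xi$-variable, a smooth parametrization of $\Gamma_2$ pulls the identity back to ordinary integration by parts on a real interval. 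Once these technicalities are dispatched, the algebraic identity is immediate from Proposition \ref{reflect with the kernel}.
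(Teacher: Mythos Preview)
Your proposal is correct and follows essentially the same approach as the paper: apply $R(-z,-\partial_z)$ under the $\xi$-integral, invoke Proposition~\ref{reflect with the kernel} to convert it to $R(\xi,\partial_\xi)^\dag$ acting on the kernel, then integrate by parts along $\Gamma_2$ with the boundary terms vanishing by the concomitant hypothesis at $t$ and by compact support at infinity. Your write-up is in fact more careful than the paper's, which presents the four-line computation without spelling out the analytic justifications you supply.
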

\begin{proof}
By integration by parts
\begin{align*}
R(-z,-\partial_z)\cdot T_W(f({\xi}))(z)
  & = \int_{\Gamma_2} R(-z,-\partial_z)\cdot K_W(z,{\xi})f({\xi})d\xi \\
  & = \int_{\Gamma_2} R(\xi,\partial_{\xi})^\dag\cdot K_W(z,{\xi})f({\xi})d\xi \\
  & = \int_{\Gamma_2} K_W(z,{\xi})R(\xi,\partial_{\xi})\cdot f({\xi})d\xi \\
  & = T_W(R(-\xi,-\partial_{\xi})\cdot f({\xi}))(z)
\end{align*}
\end{proof}

\subsection{Commuting integral operators}\label{sec: moving to commutative}
We define an operator $U_W$ related to the operator $E_WE_W^*$, analogous to the operator $T_W$ in the previous section by
$$U_W: f(z) \mapsto \int_r^\infty\int_t^\infty  \psi_W(x,iz)\psi_W^\dag(x,-i\xi)f(\xi)d\xi dx,$$
where here $\psi_W^\dag := a\psi_W$.
The kernel of this integral operator requires a distributional interpretation as mentioned earlier. This issue is addressed in more detail in example 6.1.

The integral operator above commutes with a nonconstant differential operator
$$R(z,\partial_z)\cdot U_W(f(\xi))(z) = U_W(R(\xi,\partial_\xi)\cdot f(\xi))(z).$$
Moreover, in the specific case that $a\psi_W = c\psi_W$ we have $U_W=E_WE_W^*$.

\begin{proposition}\label{commuting prop}
If $R = R(z,\partial_z)\in \mathcal F_z(\psi_W)$ is a differential operator of order at most $\ell$ which satisfies $\mathcal C_{b_{\psi_W}^{-1}(R)}(f,g,r) = 0$ and $\mathcal C_{R(\xi,\partial_\xi)}(f,g,it) = 0$ for all functions $f,g$ smooth in a neighborood of $r$ and $it$, then
for all $f(z)\in W^{\ell+1,2}$ we have
$$R(iz,-i\partial_z)\cdot U_W(f(\xi))(z) = U_W(R(i\xi,-i\partial_\xi)\cdot f(\xi))(z).$$
\end{proposition}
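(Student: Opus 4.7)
The plan is to adapt the proof of Proposition \ref{reflecting prop} to the Fourier picture, with the substitution $z\mapsto iz$ absorbed into every step. The essential new issue is that $\psi_W(x,iz)$ is oscillatory in $x$ rather than exponentially decaying, so all boundary contributions at infinity require a separate analytic justification using the Sobolev hypothesis $f\in W^{\ell+1,2}$.

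First I would record the chain-rule incarnation of the Fourier algebra identity. For any $g(u)$, $(\partial_u g)(iz)=-i\partial_z(g(iz))$, so applying $R(u,\partial_u)\cdot\psi_W(x,u) = b_{\psi_W}^{-1}(R)(x,\partial_x)\cdot\psi_W(x,u)$ and evaluating at $u=iz$ gives
\begin{equation*}
R(iz,-i\partial_z)\cdot\psi_W(x,iz) = b_{\psi_W}^{-1}(R)(x,\partial_x)\cdot\psi_W(x,iz).
\end{equation*}
The analogous computation with the adjoint-involution identity from the proof of Proposition \ref{reflect with the kernel}, i.e.\ $b_{a\psi_W}((b_{\psi_W}^{-1}(R))^\dag)(\eta,\partial_\eta)=R(-\eta,-\partial_\eta)^\dag$, specialized at $\eta=-i\xi$, gives
\begin{equation*}
(b_{\psi_W}^{-1}(R))^\dag(x,\partial_x)\cdot\psi_W^\dag(x,-i\xi) = R(i\xi,-i\partial_\xi)^\dag\cdot\psi_W^\dag(x,-i\xi),
\end{equation*}
where the right-hand side is interpreted as a differential operator in $\xi$.

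Next I would insert these two identities into the defining double integral for $U_W$ and integrate by parts twice, exactly as in Proposition \ref{reflecting prop}. The first integration by parts is in $x$ over $[r,\infty)$, moving $b_{\psi_W}^{-1}(R)(x,\partial_x)$ from $\psi_W(x,iz)$ onto $\psi_W^\dag(x,-i\xi)$. The boundary term at $x=r$ vanishes by hypothesis, since $\mathcal C_{b_{\psi_W}^{-1}(R)}(\psi_W(\cdot,iz),\psi_W^\dag(\cdot,-i\xi);r)=0$. The second integration by parts is in $\xi$ over $[t,\infty)$, moving $R(i\xi,-i\partial_\xi)^\dag$ off $\psi_W^\dag(x,-i\xi)$ and onto $f(\xi)$ to produce $R(i\xi,-i\partial_\xi)f(\xi)$; under the change of variable $\eta=i\xi$ the required concomitant vanishes at $\xi=t$ precisely because $\mathcal C_{R(\eta,\partial_\eta)}(f,g;it)=0$ by the second hypothesis. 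The endpoint $\infty$ contributions must be shown to vanish separately.

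The main obstacle is exactly these contributions at infinity. In the Laplace/reflective setting of Proposition \ref{reflecting prop} they decay because the contour forces exponential decay of $\psi_W$, but here $\psi_W(x,iz)$ and $\psi_W^\dag(x,-i\xi)$ are of bounded (polynomial/oscillatory) growth in $x$ and $\xi$. To handle this I would use that $f\in W^{\ell+1,2}([t,\infty))$ forces $f$ and its first $\ell$ derivatives to decay at infinity (in the sense noted after Theorem D), so that the $\xi$-boundary concomitant at $\xi=\infty$ vanishes; for the $x$-boundary at infinity, the concomitant is paired against $f(\xi)$ and can be treated by the same Sobolev decay together with the bounded-strip hypothesis on the contour, reducing the boundary term to an oscillatory integral that vanishes by a Riemann--Lebesgue type argument. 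A secondary technical issue is that the double integral defining $U_W$ need not be absolutely convergent, so the exchanges of integration and differentiation in these two steps must be justified distributionally, as already foreshadowed in the paragraph preceding the statement; this is where the remark on distributional interpretation in Section \ref{sec: moving to commutative} and Example 6.1 does the actual work, and where I expect most of the technical effort to go.
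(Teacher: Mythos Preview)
Your proposal is correct and follows essentially the same approach as the paper: record the chain-rule identity $R(iz,-i\partial_z)\cdot\psi_W(x,iz)=b_{\psi_W}^{-1}(R)(x,\partial_x)\cdot\psi_W(x,iz)$, integrate by parts in $x$ using the concomitant hypothesis at $r$, convert the resulting $x$-operator on $\psi_W^\dag$ back to a $\xi$-operator via the adjoint-involution identity $b_{a\psi_W}((b_{\psi_W}^{-1}(R))^\dag)(\eta,\partial_\eta)=R(-\eta,-\partial_\eta)^\dag$, and integrate by parts in $\xi$ using the concomitant hypothesis at $it$. The paper's proof is in fact more terse than yours: it presents only the formal integration-by-parts chain and does not spell out the vanishing of boundary terms at infinity or the distributional justification you anticipate, so your extra care there goes beyond what the paper records.
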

\begin{proof}
As in the proof of Proposition \ref{reflect with the kernel}, we have
$$b_{a\psi_W}((b_{\psi_W}^{-1}(R))^\dag)(\xi,\partial_\xi) = R(-\xi,-\partial_\xi)^\dag.$$
By integration by parts
\begin{align*}
R(iz,-i\partial_z)\cdot U_W(f({\xi}))(z)
  & = \int_r^\infty\int_t^\infty  R(iz,-i\partial_z)\cdot\psi_W(x,iz)\psi_W^\dag(x,-i{\xi})f({\xi})d{\xi}dx\\
  & = \int_r^\infty\int_t^\infty  b_{\psi_W}^{-1}(R)(x,\partial_x)\cdot\psi_W(x,iz)\psi_W^\dag(x,-i{\xi})f({\xi})d{\xi}dx\\
  & = \int_r^\infty\int_t^\infty  \psi_W(x,iz)b_{\psi_W}^{-1}(R)(x,\partial_x)^\dag\cdot\psi_W^\dag(x,-i{\xi})f({\xi})d{\xi}dx\\
  & = \int_r^\infty\int_t^\infty  \psi_W(x,iz)b_{a\psi_W}((b_{\psi_W}^{-1}(R))^\dag)(-i{\xi},i\partial_{\xi})\cdot\psi_W^\dag(x,-i{\xi})f({\xi})d{\xi}dx\\
  & = \int_r^\infty\int_t^\infty  \psi_W(x,iz)\psi_W^\dag(x,-i{\xi})b_{a\psi_W}((b_{\psi_W}^{-1}(R))^\dag)(-i{\xi},i\partial_{\xi})^\dag\cdot f({\xi})d{\xi}dx\\
  & = \int_r^\infty\int_t^\infty  \psi_W(x,iz)\psi_W^\dag(x,-i{\xi})R(i{\xi},-i\partial_{\xi})\cdot f({\xi})d{\xi}dx\\
  & = U_W(R(i{\xi},-i\partial_{\xi})\cdot f({\xi}))(z).
\end{align*}
\end{proof}

\subsection{Proofs of Theorems A--C}
In this section, we prove Theorems A--C by using the dimension bounds of Theorem E and Propositions \ref{reflecting prop} and \ref{commuting prop}.
%the only requirement that we must establish is the vanishing of certain associated concomitants at the finite boundary point of each 
%of the integrals. The calculation comes down to a comparison of the dimension of $\mathcal F_x^{\ell,m}(\psi_W)$ relative to the number 
%of conditions required to make the concomitant of operators of order $\ell$ and $m$ identically zero.

\begin{lemma}
Let $R(z,\partial_z)$ be a differential operator with coefficients which are holomorphic in a neighborhood $U$ of $t\in\bbc$.
If $R(z,\partial_z)$ is formally $\dagger$-symmetric (with respect to \eqref{formal adjoint formula}), then there exist functions $a_0(z),a_1(z),\dots, a_n(z)$ holomorphic on $U$ satisfying
$$R(z,\partial_z) = \sum_{j=0}^n \partial_z^ja_j(z)\partial_z^j.$$
In this case, if $a_j^{(k)}(t) = 0$ for all $0\leq k < j$ then $\mathcal C_{R}(f,g;t) = 0$ for all functions $f(z),g(z)$ analytic in a neighborhood of $t$.
If instead $R(z,\partial_z)$ is formally $\dagger$-skew symmetric, then there exist functions $b_0(z),b_1(z),\dots,b_m(z)$ holomorphic on $U$ satisfying
$$R(z,\partial_z) = \sum_{j=0}^m \partial_z^j\{b_j(z),\partial_z\}\partial_z^j.$$
In this case, if $b_j^{(k)}(t) = 0$ for all $0\leq k \leq j$ then $\mathcal C_{R}(f,g;t) = 0$ for all functions $f(z),g(z)$ analytic in a neighborhood of $t$.
\end{lemma}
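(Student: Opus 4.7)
The plan is to prove each part in two independent steps: first establish the claimed normal form by an induction on the order, and then deduce vanishing of $\mathcal C_R(f,g;t)$ from the composition rule for concomitants.

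For the decomposition in the $\dagger$-symmetric case, a direct Leibniz expansion shows that $\partial_z^j a(z)\partial_z^j$ is itself formally $\dagger$-symmetric, of order $2j$, with leading term $a(z)\partial_z^{2j}$. Given a formally $\dagger$-symmetric operator $R$ of order $n$, comparison of leading coefficients of $R$ and $R^\dagger$ forces $n$ to be even, say $n=2j$; subtracting $\partial_z^j a_{2j}(z)\partial_z^j$ leaves a formally $\dagger$-symmetric operator of strictly smaller order, and induction terminates. The $\dagger$-skew case is analogous: $\partial_z^j\{b(z),\partial_z\}\partial_z^j$ is formally $\dagger$-skew-symmetric of order $2j+1$ with leading coefficient $2b(z)$, so the order of a skew operator must be odd and the same subtraction strategy produces the desired sum.

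For the concomitant vanishing, by linearity of $\mathcal C$ in the operator it suffices to treat a single summand. The starting point is the composition rule
\[
\mathcal C_{P_1 P_2}(f,g;t) \;=\; \mathcal C_{P_1}(P_2 f,\,g;\,t) \;+\; \mathcal C_{P_2}(f,\,P_1^\dagger g;\,t),
\]
which follows from applying \eqref{integral biconcomitant} to the telescoping identity $(P_1 P_2 f)g - f(P_1 P_2)^\dagger g = [(P_1(P_2 f))g - (P_2 f)(P_1^\dagger g)] + [(P_2 f)(P_1^\dagger g) - f(P_2^\dagger P_1^\dagger g)]$. Splitting $\partial_z^j a_j\partial_z^j$ as $\partial_z^j\circ(a_j\partial_z^j)$, the rule and the monomial formula for $\mathcal C$ produce an explicit expression in which the only occurrences of $a_j$ are inside Leibniz expansions of $(a_j f^{(j)})^{(j-1-k)}$ and $(a_j g^{(j)})^{(k)}$ for $0\le k\le j-1$, so $a_j$ is differentiated at most $j-1$ times; the hypothesis $a_j^{(k)}(t)=0$ for $0\le k<j$ then kills each term. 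The skew case runs identically after writing $\partial_z^j\{b_j,\partial_z\}\partial_z^j = \partial_z^j b_j \partial_z^{j+1} + \partial_z^{j+1} b_j \partial_z^j$ and applying the composition rule to each piece; the extra factor of $\partial_z$ shifts the maximal order of a derivative of $b_j$ up by one, explaining why the sharper hypothesis $b_j^{(k)}(t)=0$ for $0\le k\le j$ is needed and sufficient.

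The main obstacle I anticipate is the combinatorial bookkeeping in the second step: one must verify that the derivative $a_j^{(j)}$ in the symmetric case (respectively $b_j^{(j+1)}$ in the skew case) never actually appears in $\mathcal C_{\partial_z^j a_j \partial_z^j}(f,g;t)$ despite showing up in a naive monomial expansion of the composite, so that the stated vanishing hypotheses are precisely tight. A conceptually clean way to see this is that the self-adjoint (respectively skew-adjoint) structure of each building block forces cancellations between the two halves of the composition rule, and this is what makes the derivative bound drop by one from $j$ to $j-1$ (respectively from $j+1$ to $j$).
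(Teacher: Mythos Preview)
Your proof is correct and follows essentially the same route as the paper: induction for the normal forms, then the composition rule $\mathcal C_{BC}(f,g;t)=\mathcal C_B(Cf,g;t)+\mathcal C_C(f,B^\dagger g;t)$ applied to the split $\partial_z^j\circ(a_j\partial_z^j)$ (and the analogous skew splits).

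Your final paragraph anticipates an obstacle that does not actually arise. Once you use the composition rule with $B=\partial_z^j$ and $C=a_j\partial_z^j$, the two resulting concomitants are $\mathcal C_{\partial_z^j}(a_jf^{(j)},g;t)$ and $(-1)^j\mathcal C_{\partial_z^j}(f,a_jg^{(j)};t)$; in both, $a_j$ sits inside an expression differentiated at most $j-1$ times, so $a_j^{(j)}$ simply never appears---no cancellation between the two halves is required. The same direct count gives the bound $j$ in the skew case. The higher derivatives you worry about only show up if you first expand $\partial_z^j a_j\partial_z^j$ into monomials and then apply the concomitant formula termwise, which is precisely what the composition rule lets you avoid.
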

\begin{proof}
The structure formulas for symmetric and skew-symmetric elements follow immediately from obvious induction arguments.
To prove the vanishing of the concomitants, recall that for any differential operators $B(z,\partial_z)$ and $C(z,\partial_z)$ that
$$\mathcal C_{BC}(f,g;z) = \mathcal C_B(C\cdot f,g;z) + \mathcal C_C(f,B^\dag\cdot g;z).$$
Consequently we have
$$\mathcal C_{\partial_z^ja_j(z)\partial_z^j}(f,g;t) = \mathcal C_{\partial_z^j}(a_jf^{(j)},g;t) + (-1)^j\mathcal C_{\partial_z^j}(f, a_jg^{(j)};t)$$
which is zero if $a_j^{(k)}(t) = 0$ for all $0\leq k < j$.
Similarly for $a_j^{(k)}(t) = 0$ for all $0\leq k \leq j$ we obtain the vanishing of the other concomitant.
\end{proof}

\begin{lemma}
Let $s\in\bbc$ and let $\mathcal F$ be a vector space of differential operators of order at most $\ell$, with coefficient which are smooth in a neighborhood of $t$.
Then for any open neighborhood $U$ of $t$ the $\bbc$-linear map
$$\mathcal C:\mathcal F\rightarrow \text{Bilinear}(C^\infty(U)\times C^\infty(U)),\ \ R(z,\partial_z)\mapsto [(f,g)\mapsto \mathcal C_{R}(f,g;t)]$$
has rank at most $d(d+1)$ if $\ell = 2d$ and $(d+1)^2$ if $\ell = 2d+1$.
\end{lemma}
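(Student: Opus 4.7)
The plan is to reduce to the symmetric and skew-symmetric cases and then apply the preceding lemma. Any $R\in\mathcal F$ decomposes uniquely as $R = R_s + R_a$ with $R_s := (R+R^\dagger)/2$ formally $\dagger$-symmetric and $R_a := (R-R^\dagger)/2$ formally $\dagger$-skew-symmetric, both of order at most $\ell$. Since the bilinear concomitant depends linearly on the operator, $\mathcal C_R = \mathcal C_{R_s} + \mathcal C_{R_a}$, so
\[
\mathrm{rank}(\mathcal C) \leq \mathrm{rank}(\mathcal C|_{\mathcal F_s}) + \mathrm{rank}(\mathcal C|_{\mathcal F_a}),
\]
where $\mathcal F_s,\mathcal F_a\subseteq \mathcal F$ denote the symmetric and skew-symmetric subspaces. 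The problem thus splits cleanly into bounding each of these two ranks.

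For the symmetric part, the preceding lemma gives a presentation $R_s = \sum_{j=0}^{\lfloor \ell/2\rfloor} \partial_z^j a_j(z)\partial_z^j$ with $a_j$ smooth near $t$, and asserts that $\mathcal C_{R_s}(f,g;t) = 0$ whenever $a_j^{(k)}(t) = 0$ for all $j$ and all $0\leq k< j$. Hence the linear map $R_s\mapsto \mathcal C_{R_s}(\cdot,\cdot;t)$ factors through the finite-dimensional quotient recording only the Taylor coefficients $\{a_j^{(k)}(t): 1\leq j\leq \lfloor \ell/2\rfloor,\ 0\leq k<j\}$. The space of such coefficients has dimension $\sum_{j=1}^{\lfloor \ell/2\rfloor} j = \lfloor \ell/2\rfloor(\lfloor \ell/2\rfloor+1)/2$, giving $\mathrm{rank}(\mathcal C|_{\mathcal F_s}) \leq \lfloor \ell/2\rfloor(\lfloor \ell/2\rfloor+1)/2$. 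The skew-symmetric case is entirely parallel: writing $R_a = \sum_{j=0}^m \partial_z^j\{b_j(z),\partial_z\}\partial_z^j$ with $m = \lfloor(\ell-1)/2\rfloor$, the second half of the preceding lemma produces vanishing of $\mathcal C_{R_a}$ when $b_j^{(k)}(t) = 0$ for $0\leq k\leq j$, so $\mathrm{rank}(\mathcal C|_{\mathcal F_a}) \leq \sum_{j=0}^m (j+1)$.

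Adding the two bounds finishes the argument: for $\ell = 2d$ both summands equal $d(d+1)/2$ and sum to $d(d+1)$, while for $\ell = 2d+1$ they are $d(d+1)/2$ and $(d+1)(d+2)/2$, summing to $(d+1)^2$. The only point requiring any care is the factorization claim itself, namely that the maps $R_s\mapsto (a_j^{(k)}(t))$ and $R_a\mapsto (b_j^{(k)}(t))$ are well defined on $\mathcal F_s$ and $\mathcal F_a$; this is an immediate consequence of the uniqueness of the normal-form presentations in the preceding lemma, which is visible by reading off leading coefficients inductively. Beyond that, no serious obstacle arises—the argument is essentially linear algebra layered on top of the preceding lemma.
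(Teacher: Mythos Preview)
Your proof is correct and follows essentially the same approach as the paper: decompose into $\dagger$-symmetric and $\dagger$-skew parts, invoke the normal forms from the preceding lemma, and count the Taylor coefficients at $t$ that control the concomitant. One minor imprecision: you write $\mathcal F_s,\mathcal F_a\subseteq\mathcal F$, but $\mathcal F$ need not be closed under $\dagger$, so $R_s,R_a$ may lie outside $\mathcal F$; the fix is simply to note that the image $\mathcal C(\mathcal F)$ is contained in $\mathcal C(\text{all symmetric of order}\leq\ell)+\mathcal C(\text{all skew of order}\leq\ell)$, and your dimension counts bound these two images regardless---the paper's proof has the same implicit understanding.
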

\begin{proof}
First assume that $\ell = 2d$.
Then any $R = R(z,\partial_z)\in\mathcal F$ is of the form $R = R_{symm} + R_{skew}$ for some formally $\dagger$-symmetric and $\dagger$-skew symmetric operators of the form
$$R_{symm} = \sum_{j=0}^d \partial_z^ja_{2j}(z)\partial_z^j \ \ \text{and}\ \ R_{skew} = \sum_{j=0}^{d-1}\partial_z^j\{a_{2j+1},\partial_z\}\partial_z^j.$$
In particular if the $d(d+1)/2$ conditions $a_{2j}^{(k)}(t) = 0$ for all $1\leq j \leq d$ and $0\leq k< j$ and  the $d(d+1)/2$ conditions $a_{2j+1}^{(k)}(t) = 0$ for all $0\leq j < d$ and $0\leq k\leq j$ are satisfied, then the concomitant $\mathcal C_R(f,g;t)$ is zero for all $f,g$.
Thus the rank is at most $d(d+1)$.

Next assume that $\ell=2d+1$.
Then any $R = R(z,\partial_z)\in\mathcal F$ is of the form $R = R_{symm} + R_{skew}$ for some formally $\dagger$-symmetric and $\dagger$-skew symmetric operators of the form
$$R_{symm} = \sum_{j=0}^d \partial_z^ja_{2j}(z)\partial_z^j \ \ \text{and}\ \ R_{skew} = \sum_{j=0}^{d}\partial_z^j\{a_{2j+1},\partial_z\}\partial_z^j.$$
In particular if the $d(d+1)/2$ conditions $a_{2j}^{(k)}(t) = 0$ for all $1\leq j \leq d$ and $0\leq k< j$ and the $(d+1)(d+2)/2$ conditions $a_{2j+1}^{(k)}(t) = 0$ for all $0\leq j \leq d$ and $0\leq k\leq j$ are satisfied, then the concomitant $\mathcal C_R(f,g;t)$ is zero for all $f,g$.
Thus the rank is at most $(d+1)^2$.
\end{proof}

Using this lemma, we prove Theorem A and Theorem B.
\begin{proof}[Proof of Theorem A]
Let $d_1=\deg(W)$, $d_2=\deg(aW)$, $b_1=\deg(bW)$, and $b_2 = \deg(abW)$ and $d = \lfloor\min((d_1+d_2)/2,(b_1+b_2)/2)\rfloor$.
Note that
$$\min((d_1+d_2)/2,(b_1+b_2)/2)^2 \geq \min(d_1d_2,b_1b_2) + \min((d_1-d_2)/2,(b_1-b_2)/2)^2,$$
which is strict when $d_1=d_2=b_1=b_2$.
Therefore $d \geq \min(d_1d_2,b_1b_2)$.

By Proposition \ref{reflecting prop} and Proposition \ref{commuting prop}, to prove the existence of a nonconstant differential operator $R(x,\partial_x)$ of order at most $\ell$ reflectively commuting with $T_W$, it suffices to prove the space $\mathcal F^{\ell,\ell}_z(\psi_W)$ contains a nonconstant differential operator satisfying the property that the bilinear concomitants $\mathcal C_{b_{\psi_W}^{-1}(R)}(f,g;r)=0$ and $\mathcal C_{R}(f,g;t) = 0$.
Moreover, by the dimension estimates of Theorem E, we know that
$$\dim \mathcal F_z^{\ell,\ell}(\psi_W)\geq (\ell+1)^2-2d.$$
Therefore if we take $\ell = 2d$, the previous Lemma implies that $\mathcal F_z^{\ell,\ell}(\psi_W)$ contains a subspace of dimension at most $2d^2+2d+1$ satisfying the prescribed concomitant condition.  In particular it contains a nonconstant differential operator $R_{r,t}(z,\partial_z)$ depending on the finite endpoints $r,t$ of the paths $\Gamma_1,\Gamma_2$ which reflectively commutes with our integral operator.
Since $R_{r,t}(z,\partial_z)\in\mathcal F_z(\psi_W)$ it has rational coefficients in $z$.
The corresponding point in $\mathcal F_z(\psi_W)$ is found by solving a linear system of equations whose entries are polynomials in $r,t$, so $R_{r,t}(z,\partial_z)$ also depends rationally on $r$ and $t$.
\end{proof}

Applying Theorem A to the situation when $acW = W$, we obtain Theorem B.
\begin{proof}[Proof of Theorem B]
Let $\Gamma_1= [r,\infty)$ and $\Gamma_2 = [t,\infty)$.% and note that $z^*=z$ on $\Gamma_i$. 
 Then since $\psi_W^\dag(x,z) = \ol{\psi_W(\ol x, \ol z)}$, we have for real $\xi,z$ that
$$K_W(z,\xi) = \int_r^\infty \psi_W(x,-\xi)\psi_W^\dag(x,-z)dx = \int_r^\infty \psi_W(x,-\xi)\ol{\psi_W(x,-z)} dx.$$
It follows that the operator $T_W$ from Theorem A is equal to $\mathcal E_W\mathcal E_W^*$.
\end{proof}

To prove Theorem C, we cannot simply apply Theorem A due to integrability issues with the relevant kernels.  Instead, we use the results of Proposition \ref{commuting prop}.
\begin{proof}[Proof of Theorem C]
\begin{enumerate}[(i)]
\item
Let $d_1=\deg(W)$, $d_2=\deg(aW)$, $b_1=\deg(bW)$, $b_2 = \deg(abW)$, and $d = \lfloor\min((d_1+d_2)/2,(b_1+b_2)/2)\rfloor$.
By the same argument as in the proof of Theorem A, for $\ell=2d$ the space $\mathcal F_z^{\ell,\ell}(\psi_W)$ contains a differential operator $R(z,\partial_z)$ satisfying $\mathcal C_{b_{\psi_W}^{-1}(R)}(f,g,r) = 0$ and $\mathcal C_{R(\xi,\partial_\xi)}(f,g,it) = 0$.  By Proposition \ref{commuting prop}, it contains a nonconstant differential operator $\wt R_{r,t}(z,\partial_z)$, depending on the finite endpoints of the paths $\Gamma_1,\Gamma_2$, which commutes with our integral operator.
In fact, from the vanishing requirements for the concomitants, we see we can take $\wt R_{r,t}(z,\partial)$ to be the 90-degree rotation of $R_{r,t}(z,\partial_z)$, as in \eqref{90 degree rotation}.

\item
Since $\psi_W^\dag(x,z) = \ol{\psi_W(\ol x, \ol z)}$, we have for real $x,\xi$ that
\begin{align*}
U_W(f)(z)
  &= \int_r^\infty\int_t^\infty \psi_W(x,iz)\psi_W^\dag(x,-i\xi)f(\xi)d\xi dx\\
  &= \int_r^\infty\int_t^\infty \psi_W(x,iz)\ol{\psi_W(x,i\xi)}f(\xi)d\xi dx\\
  &= E_WE_W^*(f)(z).
\end{align*}
\end{enumerate}
\end{proof}

\subsection{Simultaneous reflectivity and commutativity for families}
Our exposition so far has revealed the existence of differential operators commuting with integral operators associated to a single rank $1$ bispectral meromorphic function.
We next treat finite families of integral operators corresponding to points of  $\Grad$.

\begin{proof}[Proof of Theorem D]
For each $n$, the point $W_n\in\Grad$ satisfies $\mu_n(z)W_n\subseteq\bbc[z]$, $\mu_n(z)aW_n\subseteq\bbc[z]$, $\mu_n(z)\bbc[z]\subseteq aW_n$, and $\mu_n(z)\bbc[z]\subseteq W_n$, for some polynomial $\mu_n(z)$.  Setting $\mu(z) = \prod_n\mu_n(z)$, we have $\mu(z)\bbc[z]\subseteq W_n$ and $\mu(z)W_n\subseteq \bbc[z]$.
It follows that for all $n$
$$\mathcal F_z(\psi_{W_n}) = \mathcal{D}(W_n)\supseteq \mu(z) \mathcal F_z(\psi_{\exp})\mu(z).$$
Furthermore each $\psi_{W_n}(x,z)$ is of the form
$$\psi_{W_n}(x,z) = \frac{1}{p_n(x)q_n(z)}P_n(x,\partial_x)\cdot \psi_{\exp}(x,z) = \frac{1}{p_n(z)q_n(z)} Q_n(z,\partial_z)\cdot \psi_{\exp}(x,z)$$
for some polynomials $\wt p_n(x),p_n(x),\wt q_n(z),q_n(z)$ and differential operators $\wt P_n(x,\partial_x)$, $P_n(x,\partial_x)$, $\wt Q_n(z,\partial_z)$, and $Q_n(z,\partial_z)$ with polynomial coefficients satisfying
$$\wt P_n(x,\partial_x)\frac{1}{p_n(x)\wt p_n(x)}P_n(x,\partial_x) = q_n(\partial_x)\wt q_n(\partial_x),\ \ \wt Q_n(z,\partial_z)\frac{1}{q_n(z)\wt q_n(z)}Q_n(z,\partial_z) = p_n(\partial_z)\wt p_n(\partial_z).$$
Note that the order of $P_n(x,\partial_x)$ must be the degree of $q_n(z)$, which counts the $z$-poles of $\psi_{W_n}(x,z)$ and therefore is at most $\deg(\mu)$.
Moreover, $\mu(z)/q_n(z)=\theta_n(z)$ and $\mu(z)/\wt q_n(z)=\wt\theta_n(z)$ for some $\theta_n(z),\wt\theta_n(z)\in\bbc[z]$.
Therefore for any $R(z,\partial_z)\in\mathcal F_z(\psi_{\exp})$ we have
$$b_{\psi_{W_n}}^{-1}\left(\mu(z)R(z,\partial_z)\mu(z)\right) = \frac{1}{p_n(x)}P_n(x,\partial_x)\wt\theta_n(\partial_x)b_{\psi_{\exp}}^{-1}\left(R(z,\partial_z)\right)\theta_n(\partial_x)\wt P_n(x,\partial_x)\frac{1}{\wt p_n(x)}.$$

Let $\wt R(z,\partial_z) = \mu(z)(\partial_z-r)^{2\deg(\mu)}R(z,\partial_z)(\partial_z-r)^{2\deg(\mu)}\mu(z)$ so that
$$b_{\psi_W}^{-1}\left(\wt R(z,\partial_z)\right) = D(x,\partial_x)b_{\psi_{\exp}}^{-1}(R(z,\partial_z))\wt D(x,\partial_x)$$
for the operators 
$$D(x,\partial_x) = \frac{1}{p(x)}P(x,\partial_x)\theta(\partial_x)(x-r)^{2\deg(\mu)}\ \ \text{and}\ \ \wt D(x,\partial_x) = (x-r)^{2\deg(\mu)}\theta(\partial_x)\wt P(x,\partial_x)\frac{1}{\wt p(x)}.$$

Clearly the bilinear concomitants $\mathcal C_D(\cdot,\cdot;s)$ and $\mathcal C_{\wt D}(\cdot,\cdot;s)$ are both zero and therefore for all functions $f,g$ analytic in a neighborhood of $r$
\begin{align*}
\mathcal C_{b_{\psi_W}^{-1}\wt R}(f,g;r)
  & = C_{\mathcal Db_{\psi_{\exp}}^{-1}R\wt D}(f,g;r)\\
  & = \mathcal C_{D}(b_{\psi_{\exp}}^{-1}R\wt D\cdot f,g;r) + \mathcal C_{b_{\psi_{\exp}}^{-1}R}(\wt D\cdot f,D^\dag\cdot g;s) + \mathcal C_{\wt D}(f,(b_{\psi_{\exp}}^{-1}R)^\dag D^\dag\cdot g;r)\\
  & = \mathcal C_{b_{\psi_{\exp}}^{-1}R}(\wt D\cdot f,D^\dag\cdot g;r)
\end{align*}
and in in particular $\mathcal C_{b_{\psi_W}^{-1}\wt R}(\cdot,\cdot;r)$ is identically zero if $\mathcal C_{b_{\psi_{\exp}}^{-1}R}(\cdot,\cdot;r)$ is identically zero.
The dimension of the space of formally $\dagger$-symmetric operators in 
$$\mu(z)^N(\partial_z-r)^{2\deg(\mu)}\mathcal F^{2d,2d}(\psi_{\exp})(\partial_z-r)^{2\deg(\mu)}\mu(z)^N$$
is $(2d+1)(d+1)$ and contains a subspace of dimension at least
$$(2d+1)(d+1)- \frac{1}{2}(d+2\deg(\mu))(d+2\deg(\mu)+1)$$
of operators whose concomitants vanish at $t$ and a further subspace of dimension at least
$$(2d+1)(d+1)- \frac{1}{2}(d+2\deg(\mu))(d+2\deg(\mu)+1) - (d+2\deg(\mu))(d+2\deg(\mu)+1)$$
of operators which map under $b_{\psi_W}^{-1}$ to operators whose concomitants vanish at $r$.
Moreover, by the above this condition of vanishing may be taken independently of the choice of $W$.
Thus for $d$ sufficiently large there exists a self adjoint $R(z,\partial_z)$ which reflectively commutes with each of the operators $T_{W_n}$ as stated by the theorem.

Part (b) of the theorem is proved in a similar fashion.
\end{proof}
%The statement of Theorem D in the introduction follows immediately.

\section{Examples and higher level Bessel functions}\label{sect:examples}
In this section we give a detailed description of the Fourier algebras of certain higher Bessel functions. We then 
illustrate Theorems A-D by examples in which we show how one writes the reflected and commuting 
differential operators in simple forms in terms of these Fourier algebras.

The subvariety of $\Grad$ corresponding to subspaces $C\subseteq\mathscr C$ supported at $0$ (recall \S \ref{2.1}) is precisely 
the set of solutions of the KP hierarchy whose tau functions are polynomial. Such a $W \in \Grad$ can be viewed as a point 
in a family by letting the scalars in the definition of $C$ vary. The wave function $\psi_W(x,z)$ can then be expressed as a combination of 
Bessel wave functions with coefficients that are rational functions in $x$ by \cite[Prop. 5.1(a) and Eq. (5.4)]{BHY97}.
Then Theorem D can be used to obtain a reflected/commuting differential operator for all integral operators in this family. 

\subsection{First order reflected and commuting differential operators}
The most immediate application we have is related to the basic case: when the bispectral function is $\psi_{\exp}(x,z) = e^{xz}$. Thanks to Theorems A, B and C, we can obtain reflecting and commuting operators of order one.
All previous examples in the literature have involved differential operators of order two or higher.

Consider $W = \bbc[z] \in \Grad$ with wave function $\psi_{\exp}(x,z) = e^{xz}.$ Of course, we have  $\psi^\dag_{\exp}(x,z) = e^{xz}$ and therefore the integral operator 

\[
(T_W f)(z)=
({\mathcal{E}}_W {\mathcal{E}}^*_W f)(z) =  \int_t^\infty K(z,\xi)f(\xi)d\xi,\quad \text{where}\quad K(z,\xi) = \frac{e^{-r(z+\xi)}}{z+\xi},
\]
reflects an operator of order $\le 2$  (see Theorems A and B).
In fact, the operator
$$R_{r,t}(z,\partial_z) = (z-t)\partial_z - rz$$
of order one is reflected by $T_W$.
This follows from Proposition \ref{reflecting prop} and the fact that it satisfies $\mathcal C_{b_\psi^{-1}R(x,\partial_x)}(f,g;r) = 0$ and $\mathcal C_{R(z,\partial_z)}(f,g;t) = 0$ for all smooth functions $f(x),g(x)$ in a neighborhood of $r$ and $t$ (note that that $b_{\psi_{\exp}}^{-1}R(z,\partial_z)= (x-r)\partial_x - xt$).

As a consequence, we should expect that $T_W$ will map eigenfunctions of $R(w,\partial_w)$ to those of $R(-z,-\partial_z)$.
One easily verifies that the function $\varphi_{r,t}(w,\lambda) = e^{rz}(t-w)^{rt+\lambda}$ is an eigenfunction for $R_{r,t}(w,\partial_w)$ with eigenvalue $\lambda$.
Furthermore, for $\lambda$ real with $-1 < rt+\lambda < 0$, a residue integral-type argument shows
$$T_W(\varphi_{r,t}(w,\lambda)) = \varphi_{r,t}(-z,\lambda)\frac{\pi}{\cos(\pi(rt+\lambda))}.$$
The function $\varphi_{r,t}(-z,\lambda)$ is an eigenfunction of $R_{r,t}(-z,-\partial_z)$ with eigenvalue $\lambda$, as expected.

Now, thanks to Theorem C, we have that the integral operator given by 
\begin{align*}
(U_W f)(z)=&
({{E}}_W {{E}}^*_W f)(z) =  \int_r^\infty e^{ixz}\left(\int_t^\infty e^{-ix\xi}   f(\xi)d\xi\right) dx 
\\ 
=&\int_t^\infty \left(\int_r^\infty e^{-ix(\xi-z)} dx \right)    f(\xi)d\xi=:
\int_t^\infty k_r(z,\xi)    f(\xi)d\xi 
\end{align*}
 commutes with the operator $\wt R_{r,t}(z,\partial_z) = R_{r,it}(iz, -i\partial_z) = (z-t)\partial_z - irz$.
 
 \medskip 
 The kernel $k_r(z,\xi)$ has to be interpreted as a distribution acting on a $C^\infty$ rapidly decreasing function $f(\xi)$; in the simplest case when $r=-\infty$ we have 
 %to deal with 
 $k_{-\infty}(z,\xi)=\int_{-\infty}^\infty e^{-ix(\xi-z)} dx=2\pi \delta (\xi - z)$. For finite values of $r$ the integral can be written as $i(1-e^{-ir(\xi-z)})/(\xi-z)$ 
 plus the distribution given by $\int_{0}^\infty e^{-ix(\xi-z)} dx,$ i.e., the Fourier transform  $\hat H$ of the Heaviside function $H$ 
 in the variable $\xi-z$. One has $$\hat H(z)=\pi \delta (z) - i p.v.(z^{-1}),$$
see \cite[p. 360, Formula (23)]{GS64} and \cite [Example 7.1.17]{H03}. Hence, $\int_t^\infty k_r(z,\xi) f(\xi)d\xi$ can be written as a linear combination of a simple term and the integral form $0$ to $\infty$ of the Fourier transform of $f$.
\bigskip

Now since $W$ is fixed by the involution $ac$, we know that the usual adjoint $\wt R_{r,t}^*(z,\partial_z) = \overline{\wt R_{r,t}^\dag(z,\partial_z)}$ will commute with $U_W$ also.  Indeed $\wt R_{r,t}(z,\partial_z)^* = -\wt R_{r,t}(z,\partial_z)-1$.
Consequently the self-adjoint differential operator
$$-\wt R_{r,t}(z,\partial_z)^2-\wt R_{r,t}(z,\partial_z) = -\partial_z(z-t)^2\partial_z + ir\{z(z-t),\partial_z\} + r^2z^2$$
commutes with the self-adjoint integral operator $U_W$. Notably, it has both real and imaginary pieces.

\subsection{Higher Bessel Functions}\label{sec:higher bessel}
For any $\lambda=(\lambda_1,\lambda_2,\dots,\lambda_{n})\in\mathbb R^n$ consider the generalized Bessel operator 
\begin{equation}\label{Pa}
L_\lambda(z,\partial_z)= z^{-n}\,(z\partial_z -\lambda_1)(z\partial_z-\lambda_2)\dots(z\partial_z-\lambda_{n}).
\end{equation}
An eigenfunction of  this operator is given by the generalized bispectral Bessel function $\psi_{\lambda}(x,z)$, as defined in \cite[Definition 1.6]{BHY97}, 
which satisfies $L_\lambda(x,\partial_x)\cdot\psi_\lambda(x,z) = z^n\psi_\lambda(x,z)$. In this section, we explicitly identify all 
the Bessel operators $L_\lambda$ that are related via Darboux transforms to the exponential operator $\partial_z^n$ (see \eqref{orbit}) and we give 
an explicit expression for the corresponding bispectral functions $\psi_\lambda(x,z)$ in \eqref{bessel function equation} (see also \eqref{lambda} and \eqref{ulambda}). 
Furthermore, in Theorem \ref{Fourier-Bessel} 
we describe in detail the left and right Fourier algebras, with their bifiltrations, of every $\psi_\lambda(x,z)$. 
These descriptions are used in the examples of Sections \ref{KdV1} and \ref{22}.

First, let us observe that $L_\lambda=L_{w \lambda}$ for any permutation $w \in S_n$, so we can rewrite it as 
$$L_\lambda(z,\partial_z)= z^{-n+1}(z\partial_z -\lambda_ 1+1)\dots(z\partial_z-\lambda_{j-1}+1)(z\partial_z-\lambda_{j+1}+1)
(z\partial_z-\lambda_{n}+1) z^{-1}(z\partial_z-\lambda_{j}).$$
By performing a bispectral Darboux transformation we obtain a new operator
$$ z^{-1}(z\partial_z-\lambda_j)z^{-n+1}(z\partial_z -\lambda_ 1+1)\dots(z\partial_z-\lambda_{j-1}+1)(z\partial_z-\lambda_{j+1}+1)
(z\partial_z-\lambda_{n}+1)  =L_{\lambda+r_j}(z,\partial_z)$$
for $r_j=ne_j-(1,1,\dots, 1)$, where $\{e_1,\dots, e_n\}$ is the canonical basis of $\mathbb C^n$.
%Similarly, it follows that by performing a order-$j$ bispectral Darboux transformation to the operator $L_a$ we obtain a new operator $L_{a+w_j}$, where $w_j$ is the $j$-th fundamental weight of type $A_{n-1}$:  $$w_j=n\sum_{k=1}^j e_k-  j\sum_{k=1}^n e_k,\quad \text{for } 1\le j\le n-1, $$ observe that there is a scaling factor of $n$ with respect to the usual convention.
Hence, for two given operators $L_\alpha$ and $L_\beta$ of the form \eqref{Pa}, one can be obtained from the other by performing iterations of Darboux transformations if and only if $\beta=w \alpha+\lambda$, for $w \in S_n$ and $\lambda$ in the lattice  generated by $\{r_j\}_{j=1}^n$
$${P}=\{\lambda\in\mathbb Z^n: \lambda_1+\dots+\lambda_n=0, \quad \lambda_j-\lambda_{j+1}\in n\mathbb Z \quad \forall j=1,\dots,n-1\}.$$
Note that this is exactly the lattice of weights of type $A_{n-1}$, up to a scaling factor of $n$ with respect to the usual convention: we are considering the simple roots $\{\alpha_j:=n(e_j-e_{j+1})\}_{j=1}^{n-1}$. Hence, $L_\alpha$ and $L_\beta$ are related via Darboux transforms if and only if $\alpha$ and $\beta$ are in the same $W$-orbit, where $W= S_n\ltimes {P}$ is the extended affine Weyl group of type $A_{n-1}$. Here we are considering the action of the group $W$ extended to $\mathbb R^n$ rather than its restriction to the $(n-1)$-dimensional space $E$ generated by $\{\alpha_j\}$,

The fundamental alcove is defined as
$$A=
\left\{\lambda\in E:0\le\langle \lambda,\alpha_j\rangle ,\quad 
\langle \lambda,\textstyle \sum_{j=1}^{n-1}\alpha_j\rangle\le n
\right\}.
$$
The stabilizer of $A$ in $W$ is the non-trivial subgroup $\Omega=\{w \in W: w A = A\}$.
We are specially interested in the weight $\lambda_{\exp}=(0,1,\dots,n-1)$, for which $L_{\lambda_{\exp}}=\partial^n$. The corresponding point in $E$, given by $\lambda_{\exp}-(n-1) (1,\dots,1)$, is the unique point in $A$ stable by $\Omega$.

 The discussion above shows that, for any $w \in W$,  $\psi_{w \lambda_{\exp}}(x,z)$ is a bispectral Darboux transformation from the exponential function $\psi_{\lambda_{\exp}}(x,z) = e^{xz}$.
We can now describe the orbit $W\lambda_{\exp}$ as the set of elements $\lambda=(\lambda_1,\dots,\lambda_n)\in \mathbb Z^n$ such that 
\begin{equation}\label{orbit}
\lambda_1+\dots+\lambda_n=n(n-1)/2\quad \text{and}\quad 
\{(\lambda_j-\min_{1\le k\le n}{\lambda_k}) \mod n\}_{j=1}^{n}=\{0,1,\dots,n-1\}.
\end{equation}
Furthermore, for any $\lambda=(\lambda_1,\dots,\lambda_n)\in W\lambda_{\exp}$
we have 
\begin{equation}\label{lambda}
\lambda=w_\lambda\lambda_{\exp}+
\sum_{j=1}^{n} r_jc_j,\quad {\text{ with }} c_j:=
\left\lfloor{\left(\lambda_j-\min_{1\le k \le n}\{\lambda_{k}\}\right)\over n}\right\rfloor 
{\text{ and }} w_\lambda\in S_n. 
\end{equation}
More specifically, $w_\lambda$ is the permutation that sends $j-1$ to $\left((\lambda_j-\min_{1\le k\le n}{\lambda_k}) \mod n\right)$ for $j=1,\dots, n$.
%This implies that $L_a$ is obtained from $L_b$ by performing $c_1$ times the order-one bispectral Darboux transformation,  $c_2$ times the order-two bispectral Darboux transformation, etc.
This shows us that the degree $d$ of $\psi_\lambda(x,z)$, as defined in Sect. \ref{sec:Wilson}, is $d=c_1+\dots+c_n$.

\begin{figure}[h]
\begin{tikzpicture}[ scale=1]
 \usetikzlibrary{arrows,decorations.markings}

 \filldraw[
% draw=red,
% bottom color=red!50!black,
  top color=green!50,white]
(0,0) -- ({cos(30+0)},{sin(30+0)}) -- ({cos(30+60)},{sin(30+60)}) -- (0,0);

 \draw [ blue,thin,decoration={markings,mark=at position 1 with {\arrow[scale=2,>=stealth] {>}}},postaction={decorate}]  (0,0) -- (1.73,0);
 \draw [ blue,thin,decoration={markings,mark=at position 1 with {\arrow[scale=2,>=stealth] {>}}},postaction={decorate}]  (0,0) -- ({1.73*cos(30+90)},{1.73*sin(30+90)});
  
            \foreach \y in {0,1,2,3,4,5} {
                        \foreach \x in {0,1,2} {
                        \foreach \z in {0,1,2} {
                       \fill[black]  {({cos(30+\z*120)+cos(30+\x*120)},{sin(30+\z*120)+sin(30+\x*120)})+({0.577*cos(30+\y*60+90)},{0.577*sin(30+\y*60+90)})} circle [radius=2pt];  
        }}}
%    \foreach \y in {0,1,2,3,4,5} {
%                        \foreach \z in {0,1,2,3,4,5} {
%                       \fill[gray!50]  ({2*cos(30+\z*60)},{2*sin(30+\z*60)})+({0.577*cos(30+\y*60+90)},{0.577*sin(30+\y*60+90)}) circle [radius=2pt];
%                         \fill[gray!50]  ({4*0.577*cos(30+\y*60+90)},{4*0.577*sin(30+\y*60+90)}) circle [radius=2pt];
%        }}
           \foreach \y in {0,1,2} {
                        \foreach \x in {0,1,3,4} {
                       \fill[orange]  ({4*0.577*cos(\y*120)},{4*0.577*sin(\y*120)})+
({0.577*\x*cos((\y-1)*120)},{0.577*\x*sin((\y-1)*120)}) circle [radius=2pt];  
        }}
 
            \foreach \y in {0,1,2,3,4,5} {
                        \foreach \x in {0,1,2} {
                       \fill[red]  ({cos(30+\x*120)},{sin(30+\x*120)})+({0.577*cos(30+\y*60+90)},{0.577*sin(30+\y*60+90)}) circle [radius=2pt];  
        }}
            \foreach \y in {0,1,2,3,4,5} {
                       \fill[cyan]  ({0.577*cos(30+\y*60+90)},{0.577*sin(30+\y*60+90)}) circle [radius=2pt];  
        }

%                       \fill[red]  ({cos(30+2*60)-.05},{sin(30+2*60)-.05}) rectangle ({cos(30+2*60)+.05},{sin(30+2*60)+.05});
%            \foreach \y in {0,1,2} {
%                       \fill[red]  ({cos(30+2*\y*60)-.05},{sin(30+2*\y*60)-.05}) rectangle ({cos(30+2*\y*60)+.05},{sin(30+2*\y*60)+.05});
%                       \fill[red]  ({cos(30+2*\y*60)},{sin(30+2*\y*60)}) rectangle (0.2,0.2);  
%        }

 \foreach \y in {0,1,2,3,4,5} {
  \foreach \x in {0,1,2,3} {
             \draw [dotted,-, thin] ({3*cos(30+\y*60)},{3*sin(30+\y*60)})
             +({-\x*cos(30+\y*60+60)},{-\x*sin(30+\y*60+60)}) 
             -- ({-3*cos(30+\y*60)+\x*cos(30+\y*60-60)},{\x*sin(30+\y*60-60)-3*sin(30+\y*60)})
             ;
}}
 
%            \foreach \y in {0,1,2,3,4,5} {
%            \foreach \x in {0,1,2,3,4,5} {
%            \foreach \z in {0,1,2,3,4,5} {
%             \draw [dotted,-, thin] ({2*cos(30+\y*60)},{2*sin(30+\y*60)}) -- ({2*cos(30+\y*60)+cos(30+\z*60)},{2*sin(30+\y*60)+sin(30+\z*60)}) -- ({2*cos(30+\y*60)+cos(30+(\z+1)*60)},{2*sin(30+\y*60)+sin(30+(\z+1)*60)}) 
%																	({cos(30+\y*60)},{sin(30+\y*60)}) -- ({cos(30+(\y+1)*60)},{sin(30+(\y+1)*60)}) ;
%						\draw [white,-]  ({3*cos(30+\y*60)},{3*sin(30+\y*60)}) -- ({3*cos(30+(\y+1)*60)},{3*sin(30+(\y+1)*60)}) 
%;}}}

%            \foreach \y in {0,1,2} {
%\draw[black,-, thin]  ({0.577*cos(60)},{0.577*sin(60)})+({cos(30+\y*120)},{sin(30+\y*120)}) -- ({0.577*cos(60)},{0.577*sin(60)});
%}

\end{tikzpicture}
\begin{comment}
\quad \quad \quad\quad
\begin{tikzpicture}[ scale=2] 

%\fill[red]  (0,0,0) circle [radius=2pt];  
\draw[black,dashed,thin] (0,0,0) -- (2,0,0)
(0,0,0) -- (0,2,0)
(0,0,0) -- (0,0,2) 
;

\draw[black,dotted,thin] (0,1,1) -- (0,0,1) -- (1,0,1) -- (1,0,0) --  (1,1,0)  -- (0,1,0) -- (0,1,1)
;

\fill[cyan]  (1/2,1/2,1/2) circle [radius=2pt]; 
%\fill[green]  (1/2,1/2,1/2) circle [radius=1pt];
\draw[black,-,thin]  
(1,1,0) -- (1,1,2) -- (0,1,2) -- (0,1,1) -- (0,2,1) -- (1,2,1) -- (1,2,0) -- (0,2,0) -- (0,2,1)
(0,1,1) -- (2,1,1) -- (2,1,0)
(0,1,2) -- (0,0,2) -- (1,0,2) --  (1,1,2) 
(1,1,1) -- (1,0,1) -- (2,0,1) --  (2,1,1) 
(1,1,1) -- (1,2,1)
(1,0,2) -- (1,0,1) 
(2,0,1) -- (2,0,0) -- (2,1,0) -- (1,1,0) -- (1,2,0)
;
 
\fill[red]  (0,1,1) circle [radius=2pt];  
\fill[red]  (1,1,0) circle [radius=2pt];  
\fill[red]  (1,0,1) circle [radius=2pt];  
\fill[black]  (1/2+1,1/2,1/2) circle [radius=2pt];  
\fill[black]  (1/2,1/2+1,1/2) circle [radius=2pt];  
\fill[black]  (1/2,1/2,1/2+1) circle [radius=2pt];  
\fill[orange]  (1,1,1) circle [radius=2pt];  
\fill[orange]  (1,0,2) circle [radius=2pt];  
\fill[orange]  (0,1,2) circle [radius=2pt];  
\fill[orange]  (1,2,0) circle [radius=2pt];  
\fill[orange]  (0,2,1) circle [radius=2pt];  
\fill[orange]  (2,0,1) circle [radius=2pt]; 
\fill[orange]  (2,1,0) circle [radius=2pt];  

\end{tikzpicture}
\end{comment}
 \caption{}\label{An}
\end{figure} 
 
In Figure \ref{An}, we have the simple roots of $A_2$ in blue. The base point corresponding to $\lambda_{\exp}$ and its permutations are in cyan.
The fundamental alcove, containing the base point, is in green.
The possible one-step, two-step, and three-step Darboux transformations of the base point are in red, black, and orange, respectively.
%On the right, we have the first points of the lattice $P+\lambda_{\exp}$ for $A_3$; the black and cyan dots are the centers of the cubes.
 
Since for any $\lambda$ we have $L_\lambda=L_{w_\lambda\lambda}$ we can assume from the start that $w_\lambda$ is trivial to ease the technical notation for the computation of the Darboux transformation. Hence, for $\lambda\in P$ of the form in \eqref{lambda} with $w_\lambda=1$, we define the set of \emph{exceptional degrees} to be 
\begin{equation}\label{ulambda}
U_{\lambda} = \bigcup_{j=0}^{n-1}\{j,j+n,j+2n\dots,j+(c_j-1)n\}.
\end{equation}
Then,  $\psi_{\lambda}(x,z)$ is the bispectral Darboux transformation of degree $d$ from the exponential function $\psi_{\lambda_{\exp}}(x,z) = e^{xz}$ explicitly given by 
\begin{equation}\label{bessel function equation}
\psi_{\lambda}(x,z) = \frac{1}{x^dz^d}\prod_{k\in U_{\lambda}} (z\partial_z - k)\cdot e^{xz}\quad \text{with}\quad d=|U_{\lambda}|.
\end{equation}
%
%
% have $L_{\mathbf e}(x,\partial_x)=\partial_x^n$. For any $\lambda=\sum_{j=1}^{n} r_jc_j\in\mathcal P$ (see \eqref{lambda})
%
%For any $\lambda\in\mathcal P$ we denote 
%$$\psi_{\lambda}(x,z)=z^{-d}x^{-d}\,(x\partial_x -u_1)(x\partial_x-u_2)\dots(x\partial_x-u_{d})\,e^{xz}$$
%where $u_{c_1+\dots+c_{j}+k}=%a_{j+1}
%j+n(k-1)$, for any $0\le j\le n-1$ and $1\le k\le c_{j+1}$. 
The corresponding point in the adelic Grassmannian $\Grad$ is
%$$
%V_{(0,1,\dots,n)+\lambda}=\frac{1}{z^{-n}} \{f(z)\in\bbc[z]: f^{(u_i)} = 0, \quad 1 \leq i \leq d \}.
%$$
$$W_{\lambda} = \frac{1}{z^d}\vspan\{z^k: 0\leq k\notin U_\lambda\}.$$
It follows from \eqref{bessel function equation} that $\psi_{\Be(\nu)}(x,z) = \psi_{(-\nu,\nu+1)}(x,z)$ for the Bessel functions defined in \eqref{bispectral bessel}.
The next theorem describes the left and right Fourier algebras of generalized Bessel functions.

\begin{theorem}\label{Fourier-Bessel}
Consider the generalized Bessel function $\psi_\lambda(x,z)$.
Since $\psi_\lambda(x,z) = \psi_\lambda(z,x)$, the left and right Fourier algebras are isomorphic via
$$\mathcal F_x(\psi_\lambda)\rightarrow\mathcal F_z(\psi_\lambda):\ \ L(x,\partial_x)\mapsto L(z,\partial_z).$$
Furthermore
\begin{equation}\label{bessel fourier algebra}
\mathcal F_z(\psi_\lambda)
  = \vspan_\bbc\left\lbrace z^r f(z\partial_z): r\in\bbz, f(\lambda)\in\bbc[\lambda],\ \text{and}\ f(n-d) = 0\ \forall\ \substack{n\in\bbz_+\diff U_\lambda\\ n+r\notin \bbz_+\diff U_\lambda}\right\rbrace
\end{equation}
and the bifiltration satisfies
\begin{equation}\label{bessel fourier algebra bifiltration}
\mathcal F_x^{\ell,m}(\psi_{\lambda}) =  \vspan\{x^{r}f(x\partial_x)\in \mathcal F_x(\psi_{\lambda}): \deg(f)\leq \ell,\ \deg(f)+r\leq m\}.
\end{equation}
\end{theorem}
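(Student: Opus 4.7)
The plan is to first prove the symmetry $\psi_\lambda(x,z)=\psi_\lambda(z,x)$, then apply Theorem \ref{ident} to identify $\mathcal F_z(\psi_\lambda)$ with $\mathcal D(W_\lambda)$, and describe the latter algebra combinatorially. The symmetry is immediate from $(z\partial_z-k)e^{xz}=(xz-k)e^{xz}=(x\partial_x-k)e^{xz}$ together with the manifestly symmetric prefactor $1/(x^d z^d)$ in \eqref{bessel function equation}. Given this, if $L(x,\partial_x)\psi_\lambda(x,z)=R(z,\partial_z)\psi_\lambda(x,z)$, then swapping $x$ and $z$ in this identity yields $L(z,\partial_z)\psi_\lambda(x,z)=R(x,\partial_x)\psi_\lambda(x,z)$, showing that $L(z,\partial_z)\in \mathcal F_z(\psi_\lambda)$ and giving the claimed isomorphism of Fourier algebras.

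For \eqref{bessel fourier algebra}, Theorem \ref{ident} reduces the task to describing $\mathcal D(W_\lambda)$ with $W_\lambda = z^{-d}\vspan\{z^k:0\leq k\notin U_\lambda\}$. Any $D=\sum_i a_i(z)\partial_z^i\in\mathcal D(W_\lambda)\subseteq\bbc(z)[\partial_z]$ must have Laurent-polynomial coefficients: applying $D$ to $z^m$ for $m\gg 0$ produces a Laurent polynomial in $W_\lambda$, and a common-denominator argument then forces each $a_i\in\bbc[z,z^{-1}]$. Using the identity $z^j\partial_z^i=z^{j-i}(z\partial_z)(z\partial_z-1)\cdots(z\partial_z-i+1)$, each such $D$ rewrites uniquely as a finite sum $D=\sum_{r\in\bbz}z^r g_r(z\partial_z)$ with $g_r\in\bbc[t]$; uniqueness follows from the linear independence of the monomials $z^{m+r}$ in $D\cdot z^m=\sum_r g_r(m)z^{m+r}$. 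Since $W_\lambda$ is spanned by monomials, the preservation condition $D\cdot W_\lambda\subseteq W_\lambda$ decouples across $r$: the summand $z^r g_r(z\partial_z)$ preserves $W_\lambda$ if and only if $g_r(n-d)=0$ whenever $n\in\bbz_+\setminus U_\lambda$ and $n+r\notin\bbz_+\setminus U_\lambda$, which is exactly \eqref{bessel fourier algebra}.

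For the bifiltration \eqref{bessel fourier algebra bifiltration}, the order of $L=x^r f(x\partial_x)$ as an $x$-differential operator is $\deg(f)$, giving the $\ell$-constraint. The $m$-constraint rests on the order formula used in the proof of Theorem E, namely $\ord(b_{\psi_W}^{-1}(R))=\max_k\deg a_k(z)$ for $R=\sum_k a_k(z)\partial_z^k$, which is obtained via Laurent expansion at $z=\infty$. Applying the same argument to $\psi_{bW_\lambda}=\psi_\lambda$ yields the symmetric version $\ord(b_{\psi_\lambda}(L))=\max_k\deg a_k(x)$ for $L=\sum_k a_k(x)\partial_x^k\in\mathcal F_x(\psi_\lambda)$. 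Expanding $x^r f(x\partial_x)=\sum_k\big(\sum_{i\geq k}c_i S(i,k)\big)x^{r+k}\partial_x^k$, where $S(i,k)$ are Stirling numbers of the second kind and $f(t)=\sum c_i t^i$, the coefficient of $\partial_x^k$ is a scalar multiple of $x^{r+k}$, of degree $r+k$ when nonzero, with maximum $r+\deg(f)$ attained at $k=\deg(f)$ (where the scalar is the nonzero leading coefficient $c_{\deg f}$). This proves the inclusion $\supseteq$ in \eqref{bessel fourier algebra bifiltration}. Conversely, for $L=\sum_r x^r f_r(x\partial_x)\in\mathcal F_x^{\ell,m}(\psi_\lambda)$, the monomials $x^{r+k}$ for varying $r$ do not cancel, so the bound $\ord(L)\leq\ell$ forces $\deg f_r\leq\ell$ for each $r$, and the bound $\max_k\deg a_k\leq m$ applied at $k=\deg f_r$ forces $r+\deg f_r\leq m$. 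The principal technical point in this argument is the coefficient bookkeeping: isolating each summand $z^r g_r(z\partial_z)$ of $D$ and tracking its contributions to both filtration indices cleanly, which succeeds precisely because of the degree-graded nature of the Laurent monomials in $W_\lambda$.
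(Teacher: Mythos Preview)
Your proof is correct and reaches the same conclusions as the paper, but the route differs in presentation. The paper observes that $S=z\partial_z$ lies in $\mathcal F_z(\psi_\lambda)$ and that $\ad_S$ preserves each filtered piece $\mathcal F_z^{\ell,m}(\psi_\lambda)$; choosing an eigenbasis for $\ad_S$ immediately gives the span by operators $z^r f(z\partial_z)$, after which the identification with $\mathcal D(W_\lambda)$ yields the vanishing conditions. For the bifiltration, the paper uses directly that $b_{\psi_\lambda}$ sends $x\partial_x$ to $z\partial_z$ (hence intertwines the $\ad_S$-gradings) and that an operator with leading term $x^j\partial_x^k$ maps to one with leading term $z^k\partial_z^j$. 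Your argument instead establishes the Laurent-polynomial coefficients by acting on monomials in $W_\lambda$, rewrites via the falling-factorial identity, and for the bifiltration invokes the degree formula from the proof of Theorem~E together with an explicit Stirling-number expansion. Both arguments are short and equivalent in substance: your explicit computation is the coordinate realization of the paper's $\ad_S$-eigendecomposition, and the ``leading term $x^j\partial_x^k \mapsto z^k\partial_z^j$'' statement is exactly your $\max_k \deg a_k$ formula.
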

\begin{proof}
First note that the right Fourier algebra $\mathcal F_z({\psi_{\lambda}})$ contains $S(z,\partial_z):= z\partial_z$.  The action of $\ad_{S}$ on $\mathcal F_z({\psi_{\lambda}})$ preserves $\mathcal F_z^{\ell,m}({\psi_{\lambda}})$ for all integers $\ell,m$.
A differential operator with rational coefficients $R(z,\partial_z)$ is an eigenfunction of $\ad_S$ if and only if $R(z,\partial_z) = z^rf(S(z,\partial_z))$ for some polynomial $f(\lambda)\in\bbc[\lambda]$.
By choosing a basis for $\mathcal F_z^{\ell,m}({\psi_{\lambda}})$ consisting of eigenvectors of $\ad_S$, we see that $\mathcal F_z^{\ell,m}({\psi_{\lambda}})$ is spanned by differential operators of this form.
Thus so too is $\mathcal F_z({\psi_{\lambda}}).$

Now if we recall that $W_{\lambda} = \frac{1}{z^d}\vspan\{z^k: \bbz_+\diff U_\lambda\}$ and that
$$\mathcal F_z({\psi_{\lambda}}) = \vspan_\bbc\{z^rf(S(z,\partial_z)): r\in\bbz,\ f(\lambda)\in\bbc[\lambda],\ z^rf(S(z,\partial_z))\cdot W_\lambda\subseteq W_\lambda\},$$
the expression \eqref{bessel fourier algebra} follows immediately.

The generalized Fourier map sends $x\partial_x$ to $z\partial_z$.  Since $\ad_S(z^rf(S(z,\partial_z))) = rz^rf(S(z,\partial_z))$, the preimage of $z^rf(S(z,\partial_z))\in\mathcal F_z({\psi_{\lambda}})$ under $b_{\psi_{\lambda}}$ must be an eigenfunction of of $\ad_S$ with eigenvalue $-r$, i.e. of the form $x^{-r}g(S(x,\partial_x))$.

The generalized Fourier map sends a differential operator with leading coefficient $x^j\partial_x^k$ to a differential operator with leading coefficient $z^k\partial_z^j$, and consequently the bifiltration is given by \eqref{bessel fourier algebra bifiltration}.
\end{proof}

\subsection{The step one KdV case, \cite{DG86}}\label{KdV1}
Consider the $\tau$-function $\tau(x) = x^3 - 3t_3$.
The wave functions of the KP flow corresponding to this $\tau$-function classify the first step bispectral functions in the KdV family of \cite[Eq. 1.39]{DG86}, and are given by
\begin{align*}
\psi(x,z)
  &= \frac{(x-z^{-1})^3 +z^{-3} - 3t_3}{x^3 - 3t_3}e^{xz}.
\end{align*}
We can rewrite this as
$$\psi(x,z) = \frac{x^3}{x^3-3t_3}\psi_{(-2,3)}(x,z) + \frac{-3t_3}{x^3-3t_3}\psi_{(0,1)}(x,z).$$
The function $\psi_{(0,1)}(x,z) = \psi_{\lambda_{\exp}}(x,z)$ is the base point of the level two lattice of Bessel functions in Sect. \ref{sec:higher bessel}, and the function $\psi_{(-2,3)}(x,z)$ is a one-step Darboux transformation from it.

Let us consider the integral operator $T_\psi$ given by the kernel 
$$
%K_\psi(z,\xi) := \int_r^\infty \psi(x,-z)\psi^\dag(x,-\xi) dx = e^{-r(z+\xi)}\frac{(r-z^{-1}-\xi^{-1})^3-3t_3+z^{-3}+\xi^{-3}}{(r^3-3t_3)(z+\xi)}\cdot
K_\psi(z,\xi) := \int_r^\infty \psi(x,-z)\psi^\dag(x,-\xi) dx = e^{-r(z+\xi)}\frac{(r +z^{-1} +\xi^{-1})^3-3t_3-z^{-3}-\xi^{-3}}{(r^3-3t_3)(z+\xi)}\cdot
$$
By theorem A we know that $T_\psi$ reflects a differential operator. We explicitly build such operator, which is in $\mathcal F_z(\psi_{(0,2)})\cap \mathcal F_z(\psi_{(-2,3)})$, and is therefore independent of $t_3$. Note that Theorem D guarantees the existence  of an operator in this intersection that is reflected by $T_\psi$, and in particular by the kernels associated to $\psi_{(0,2)}$ and  $\psi_{(-2,3)}$, which are limiting cases of $K_\psi$.
By Theorem \ref{Fourier-Bessel} we can write it in the form
$$R(z,\partial_z) = \sum_{r=-4}^5 z^r f_r(z\partial_z),$$
for some polynomials $f_r(u)\in\bbc[u]$ which satisfy $f_r(n-2)=0$ whenever $n\in\{0,2,4,5,6,\dots\}$ and $n+r\notin\{0,2,4,5,6,\dots\}$.
If we further require our operator to be self-adjoint, we can require the polynomials to satisfy the additional symmetry $f_r(u) = f_r(-u-1-r)$.
A reflected operator whose polynomials satisfy this condition is specifically calculated to be
%\begin{align*}
%f_{-4}(u) &= -2t^3u(u - 5)(u - 3)(u - 2)(u - 1)(u + 2),\\
%f_{-3}(u) &= 6t^2u(u - 4)(u - 2)(u - 1)(u + 1)(u + 2),\\
%f_{-2}(u) &= -3tu(u - 3)(u - 1)(u + 2)(-r^2t^2 + 2u^2 - 2u),\\
%f_{-1}(u) &=  2u^2(u - 2)(u + 2)(-6r^2t^2 + u^2 - 1),\\
%f_0(u) &= 3r^2tu(u + 1)(5u^2 + 5u - 8),\\
%f_1(u) &= -6r^2u(u + 2)(-rt + u + 1)(rt + u + 1),\\
%f_2(u) &= -r^4t(r^2t^2 + 12u^2 + 36u + 18),\quad f_3(u) = 6r^4(u + 2)^2,\quad f_4(u) = 3r^6t.
%\end{align*}
\begin{align*}
f_{-4}(u) &= - it^{3} u (u - 5) (u - 3) (u - 2) (u - 1) (u + 2),\\
f_{-3}(u) &= 3 it^{2} u (u - 4) (u - 2) (u - 1)^2 (u + 2)\\
f_{-2}(u) &= - (3/2) it u (u - 3) (u - 1) (u + 2) (- r^{2} t^{2} + 2 u^{2} - 2 u),\\
f_{-1}(u) &=  iu^{2} (u - 2) (u + 2) (- 6 r^{2} t^{2} + u^{2} - 1),\\
f_0(u) &= (3/2) ir^{2} t u (u + 1) (5 u^{2} + 5 u - 8),\\
f_1(u) &= - 3 ir^{2} u (u + 2) (- r t + u + 1) (r t + u + 1),\\
f_2(u) &= - (1/2) ir^{4} t (r^{2} t^{2} + 12 u^{2} + 36 u + 18),\\
f_3(u) &= 3 ir^{4} (u + 2)^{2}, \quad f_4(u) = (3/2) ir^{6} t, \quad f_5(u) = -ir^6.
\end{align*}
After a change of variables $z\mapsto iz$, $t\mapsto it$, $\partial_z\mapsto-i\partial_z$, we obtain an operator which commutes with the integral operator given by
$$f(\xi)\mapsto \int_r^\infty \int_t^\infty \psi(x,i z)\ol{\psi(x,i\xi)}f(\xi)d \xi dx.$$
Specifically, we obtain the self-adjoint differential operator (with respect to \eqref{adjoint formula})
$$\wt R_{r,t}(z,\partial_z) = R_{r,it}(iz,-i\partial_z) = \sum_{j=0}^3 \partial_z^k a_k(z)\partial_z^k$$
with  
\begin{align*}
a_0(z) & = \frac{1}{6}z^2(3t^3r^6-54tr^4) + z^5r^6 - \frac{3tz^4r^6}{2} + 12z^3r^4,\\
a_1(z) & = (z-t)\left(3z^4r^4-3tz^3r^4+12z^2r^2+9tzr^2-9r^2t^2\right),\\
a_2(z) & = (z-t)^2\left(3z^3r^2-\frac{3}{2}tz^2r^2 + 12t\right),\quad a_3(z) = (z-t)^3z^2.
\end{align*}

However, if we do not mind a dependence on the parameter $t_3$ then we can get a differential operator of much smaller order.
Applying Theorem B, we obtain that $T_\psi$ reflects a differential operator of order three, namely the operator
%\begin{align*}
%&R_{r,t}(z,\partial_z)
%  = -(z+t)^2z\partial_z^3 + \big(rt^3 - 3rtz^2 - 2rz^3 + t^3t_3z^2 + 3t^2t_3z^3 - \frac{3}{2}t^2 - 6tz - \frac{9}{2}z^2 \big)\partial_z^2\\
%  & + \big(r^2t^2z - r^2z^3 + 6rt^3t_3z^2 + 6rt^2t_3z^3 - 6rtz - 6rz^2 + 6t^3t_3z + 9t^2t_3z^2 + 6t^2z^{-1} + 6t - z \big)\partial_z\\
%  & -6rt^3z^{-2} - 3t^2z^{-2} + r^3tz^2 + 3r^2t^3t_3z^2 + 3r^2t^2t_3z^3 - \frac{3}{2}r^2z^2 + 6rt^3t_3z + 9rt^2t_3z^2 + 3t^2t_3z + 3.
%\end{align*}
\begin{align*}
&R_{r,t}(z,\partial_z)
  = -(z-t)^2z\partial_z^3 + \big(rt^3 - 3rtz^2 + 2rz^3 - 3t^3t_3z^2 + 3t^2t_3z^3 - \frac{3}{2}t^2 + 6tz - \frac{9}{2}z^2 \big)\partial_z^2\\
  & \big(r^2t^2z - r^2z^3 + 6rt^3t_3z^2 - 6rt^2t_3z^3 - 6rtz + 6rz^2 - 6t^3t_3z + 9t^2t_3z^2 + 6t^2z^{-1} - 6t - z \big)\partial_z\\
  & -6rt^3z^{-2} - 3t^2z^{-2} + r^3tz^2 - 3r^2t^3t_3z^2 + 3r^2t^2t_3z^3 - \frac{3}{2}r^2z^2 + 6rt^3t_3z - 9rt^2t_3z^2 + 3t^2t_3z + 3.
\end{align*}

%[[D**3*(-1.0*b**2*t + 2.0*b*t**2 - 1.0*t**3) + D**2*(1.0*a*b**3 - 3.0*a*b*t**2 + 2.0*a*t**3 - 1.0*b**3*r*t**2 + 1.0*b**2*r*t**3 - 1.5*b**2 + 6.0*b*t - 4.5*t**2) + D*(1.0*a**2*b**2*t - 1.0*a**2*t**3 + 2.0*a*b**3*r*t**2 - 2.0*a*b**2*r*t**3 - 6.0*a*b*t + 6.0*a*t**2 - 2.0*b**3*r*t + 3.0*b**2*r*t**2 + 6.0*b**2/t - 6.0*b - 1.0*t) + 1.0*a**3*b*t**2 - 1.0*a**2*b**3*r*t**2 + 1.0*a**2*b**2*r*t**3 - 1.5*a**2*t**2 + 2.0*a*b**3*r*t - 6.0*a*b**3/t**2 - 3.0*a*b**2*r*t**2 + 1.0*b**2*r*t - 3.0*b**2/t**2 + 3.0]]

%
%\
%
%\
%
%$$-{{s^4\,z^2\,\left(2\,s^2\,z^3-3\,s^2\,t\,z^2-24\,z+s^2\,t^3
% +18\,t\right)}\over{2}} , $$$$3\,s^2\,\left(z-t\right)\,\left(s^2\,z^4-s
 %^2\,t\,z^3-4\,z^2-3\,t\,z+3\,t^2\right) ,$$$$ -{{3\,\left(z-t\right)^2\,
 %\left(2\,s^2\,z^3-s^2\,t\,z^2-8\,t\right)}\over{2}} ,$$$$ z^2\,\left(z-t
 %\right)^3 $$
%
%
%\
%
%\
 
\subsection{The Young diagram (2,2)}\label{22}
Consider the Schur tau function
$$\tau(x) = x^4-12 x t_3+12 t_2^2$$
corresponding to the Young diagram (2,2).

The wave functions of the associated KP flow are
$$\psi(x,z) = e^{xz}\frac{(x-z^{-1})^4-12 (x-z^{-1}) (t_3-z^{-3}/3)+12 (t_2-z^{-2}/2)^2}{x^4-12 x t_3+12 t_2^2}\cdot$$
This can be rewritten as
\begin{align*}
\psi(x,z)
  & = \frac{x^4-2t_2x^2}{\tau(x)}\psi_{(-1,-2,4,5)}(x,z)\\
  & + \frac{8t_2x^2-12xt_3}{\tau(x)}\psi_{(-1,1,2,4)}(x,z) + \frac{12t_2^2-6x^2t_2}{\tau(x)}\psi_{(0,1,2,3)}(x,z).
\end{align*}
The third wave function on the right hand side is the base point $\psi_{\lambda_{\exp}}(x,z)$ of the lattice of level four Bessel functions in Sect. \ref{sec:higher bessel}, while the second and first wave functions are one-step and two-step Darboux transformations from it, respectively.

As in previous examples, we consider the integral operator $T_\psi$ with kernel 
\begin{align*}
K_\psi(z,\xi)
  :&= \int_r^\infty \psi(x,-z)\psi^\dag(x,-\xi) dx\\
  &= e^{-r(z+\xi)}\frac{\left(r+\frac{1}{z}+\frac{1}{\xi}\right)^4 - 12\left(r+\frac{1}{z}+\frac{1}{\xi}\right)\left(t_3 + \frac{1}{3z^3}+\frac{1}{3\xi^3} \right) + 12\left(t_2 - \frac{1}{2z^2}+\frac{1}{2\xi^2}\right)^2}{(z+\xi)(r^4-12rt_3+12t_2^2)}.
\end{align*}
By Theorem A we know that $T_\psi$ reflects a differential operator. We explicitly build such operator, which is in the intersection
$$\mathcal F_z(\psi_{(-1,-2,4,5)})\cap \mathcal F_z(\psi_{(-1,1,2,4)})\cap \mathcal F_z(\psi_{(0,1,2,3)}),$$
and is therefore independent of $t_2$ and $t_3$. Note that its existence is guaranteed by Theorem D.
It will neccessarily be reflected by the integral operators with kernels associated to $\psi_{(-1,-2,4,5)}$, $\psi_{(-1,1,2,4)}$ and  $\psi_{(0,1,2,3)}$, appearing as limiting cases of $K_\psi$.
By Theorem \ref{Fourier-Bessel}, we can write it in the form
$$R(z,\partial_z) = \sum_{r=0}^6 z^r f_r(z\partial_z)$$
for some polynomials $f_r(u)\in\bbc[u]$.
The constructed reflected operator is order four and the polynomials are given by
\begin{align*}
f_0(u)    &= t^2u(u^3 + 2u^2 - u - 2), \\
f_1(u)    &= -2tu(u + 1)(4rt + u^2 + u(2rt + 3) + 2), \\
f_2(u)    &= (u + 1)(u + 2)(6r^2t^2 + 12rt + u^2 + u(8rt + 3) + 2), \\
f_3(u)    &= -4r(u + 2)(r^2t^2 + 6rt + u^2 + u(3rt + 4) + 4), \\
f_4(u)    &= r^2(r^2t^2 + 20rt + 6u^2 + 2u(4rt + 15) + 38), \\
f_5(u)    &= -2r^3(rt + 2u + 6), \quad f_6(u)  = r^4.
\end{align*}
The corresponding fourth order differential operator
\begin{align*}
\wt R_{r,t}(z,\partial_z) &= R_{r,it}(iz,-i\partial_z) = -\partial_z^2(z-t)^2z^4\partial_z^2\\
  & + 2ri\{\partial_z^3,z^4(z-t)^2\}  + 2\partial_zz^3(3r^2z(z-t)^2 - 4(z-t))\partial_z\\
  & - 2ri\{\partial_x,z^2((r^2z^2+12)(z-t)^2 - 12tz + 14z^2)\}\\
  & + z^2(-r^4z^2(z-t)^2 + 2r^2(6t^2-24tz + 19z^2) - 4).
\end{align*}
is clearly self-adjoint and by Theorem C, it commutes with the integral operator given by
$$U_\psi: f(\xi)\mapsto \int_r^\infty \int_t^\infty \psi(x,i z)\psi^\dag(x,-i\xi)f(\xi)d \xi dx.$$
\medskip
\\
\noindent
{\bf Acknowledgements.} The research of W.R.C. was supported by a 2018 AMS-Simons Travel Grant. M.Y. was supported by NSF grant DMS-1901830 and
Bulgarian Science Fund grant DN02/05. The research of I.Z. was supported by CONICET PIP grant 112-200801-01533, SECyT grant 30720150100255CB
and a Fulbright scholarship. I.Z. and F.A.G. have also benefitted from conversations during the Research in Pairs program at Oberwolfach in the fall of 2018.

\bibliographystyle{plain}

\begin{thebibliography}{99}
\bibitem{AvM92} 
M.~Adler and P.~van Moerbeke, 
{\em{A matrix integral solution to two-dimensional $W_p$-gravity}}, 
Comm. Math. Phys. {\bf{147}} (1992), 25--56. 

\bibitem{AvM99}
M.~Adler and P.~van Moerbeke,
{\em{The spectrum of coupled random matrices}}, 
Ann. of Math. (2) {\bf{149}} (1999), 921--976.

\bibitem{AMM77}
H.~Airault, H.~P. McKean, and J.~Moser, {\em{Rational and elliptic solutions of the {K}orteweg-de {V}ries equation and a related many-body problem}}, 
Comm. Pure Appl. Math. {\bf{30}} (1977), 95--148.

\bibitem{BHY97} B.~Bakalov, E.~Horozov, and M.~Yakimov, {\em{Bispectral algebras of commuting ordinary differential operators}}, 
Comm. Math. Phys. {\bf{190}} (1997), 331--373.

\bibitem{BM} B. Bakalov and T. Milanov, {\em{$\mathscr W$-constraints for the total descendant potential of a simple singularity}},
Compos. Math. {\bf{149}} (2013), 840--888.

\bibitem{bateman} H.~Bateman, {\em{On the inversion of a definite integral}}, Proc. London Math. Soc. {\bf{2}} (1907), 461--498.

\bibitem{BZN} D. Ben-Zvi and T. Nevins, {\em{$\mathscr W$-symmetry of the ad\`elic Grassmannian}},
Comm. Math. Phys. {\bf{293}} (2010), 185--204. 

\bibitem{BC}   Y. Berest and O.  Chalykh, {\em{$A_\infty$-modules and Calogero--Moser spaces}}, 
J. Reine Angew. Math. {\bf{607}} (2007), 69--112.

\bibitem{BW00} Y. Berest and G. Wilson, {\em{Automorphisms and ideals of the Weyl algebra}},
Math. Ann. {\bf{318}} (2000), 127--147.

%\bibitem{BW02} Y. Berest and G. Wilson, {\em{Ideal classes of the Weyl algebra and noncommutative projective geometry}} (with an appendix by Michel Van den Bergh), 
%Int. Math. Res. Not. {\bf{2002}}, 1347--1396. 

\bibitem{BW04} Y. Berest and G. Wilson, {\em{Differential isomorphism and equivalence of algebraic varieties}}, 
in: Topology, geometry and quantum field theory, London Math. Soc. Lect. Note Series vol. 308, pp 98--126. 
Cambridge Univ. Press, Cambridge, 2004.

\bibitem{Bertero} M. Bertero and F. A. Gr\"unbaum, {\em{Commuting differential operators for the finite Laplace transform}}, Inverse Problems 1 (1985), no. 3, 181-192.

\bibitem{CH} R. C. Cannings and M. P. Holland, {\em{Right ideals in rings of differential operators}}, J. Algebra {\bf{167}} (1994), 116--141. 

\bibitem{CGYZ} W. R. Casper, F. A. Gr\"unbaum, M. Yakimov and I. Zurri\'an, {\em{Reflective prolate-spheroidal operators and the KP/KdV equations}}, 
Proc. Natl. Acad. Sci. USA {\bf{116}} (2019), no. 37, 18310--18315.

\bibitem{CY18} W.~R. Casper and M.~T. {Yakimov}, {\em{Integral operators, bispectrality and growth of Fourier algebras}},
J. Reine Angew. Math. doi:10.1515/crelle-2019-0031. 

\bibitem{Date}  E. Date, M. Kashiwara, M. Jimbo, and T. Miwa, {\em{Transformation groups for solition equations}}, Ninlinear integrable systems---classical theory and quantum theory ({K}yoto, 1981).

\bibitem{DG86} J.~J. Duistermaat and F.~A. Gr{\"{u}}nbaum, {\em{Differential equations in the spectral parameter}}, 
Comm. Math. Phys. {\bf{103}} (1986), 177--240.

\bibitem{DK10} J.~J. Duistermaat and J.A.C. Kolk, {\em{Distributions. Theory and applications}}, 
Birkh\"auser, New York and London, 2010.

\bibitem{EG} P. Etingof and V. Ginzburg, {\em{Symplectic reflection algebras, Calogero--Moser space, and deformed Harish-Chandra homomorphism}},  
Invent. Math. {\bf{147}} (2002), 243--348. 

\bibitem{GS64} I. M. Gelfand and G. E. Shilov, {\em{Generalized functions. Properties and operations}}, 
Academic Press, New York and London, 1964.

\bibitem{Grunbaum1996} F.~A. Gr\"unbaum, {\em{Band-time-band limiting integral operators and commuting differential operators}},
Algebra i Analiz {\bf{8}} (1996), 122--126.

\bibitem{H03} L. H\"ormander, {\em{The analysis of linear partial differential operators I}}, 
Springer-Verlag, Berlin Heidelberg, 2003.

\bibitem{Iliev} P. Iliev, {\em{$q$-KP hierarchy, bispectrality and Calogero--Moser
systems}}, J. Geom. Phys. {\bf{35}} (2000), 157--182.

\bibitem{Ince} E.~L. Ince, {\em{Ordinary {D}ifferential {E}quations}}, Dover Publ., New York, 1944.

\bibitem{KP} V. G. Kac and D. H. Peterson, {\em{Spin and wedge representations of infinite-dimensional Lie algebras and groups}}, 
Proc. Nat. Acad. Sci. U.S.A. {\bf{78}} (1981), 3308--3312. 

\bibitem{KKK86}  M. Kashiwara,  T. Kawai and T. Kimura, {\em{Foundations of Algebraic Analysis}}, Princeton University Press, Princenton, New Jersey, 1986.


\bibitem{K} V. Katsnelson, {\em{Self-adjoint boundary conditions for the prolate spheroid differential operator}}, in: Indefinite inner product spaces, 
Schur analysis, and differential equations, pp. 357--386, Oper. Theory Adv. Appl. {\bf{263}}, Birkh\"auser/Springer, Cham, 2018.

\bibitem{KKS} D. Kazhdan, B. Kostant, and S. Sternberg, {\em{Hamiltonian group actions and dynamical systems of Calogero type}}, 
Comm. Pure Appl. Math. {\bf{31}} (1978), 481--507. 

\bibitem{Kon} M. Kontsevich, {\em{Intersection theory on the moduli space of curves and the matrix Airy function}}, 
Comm. Math. Phys. {\bf{147}} (1992), 1--23.

\bibitem{kouakou} K.~M. Kouakou, {\em{Isomorphismes entre algebres d’op{\'e}rateurs diff{\'e}rentiels sur
les courbes alg{\'e}briques affines}}, Th\'ese, Univ. Claude Bernard, Lyon-I, France, 1994.

\bibitem{Krichever} I.~M. Krichever, {\em{Rational solutions of the {K}adomcev-{P}etvia\v{s}vili equation and
the integrable systems of {$N$} particles on a line}}, Funkcional. Anal. i Prilo\v{z}en. {\bf{12}} (1978), 76--78.

\bibitem{LML} G. Letzter and L. Makar-Limanov, {\em{Rings of differential operators over rational affine curves}},
Bull. Soc. Math. France {\bf{118}} (1990), 193--209.

\bibitem{LP1} H.~J. Landau and H.~O. Pollak, {\em{Prolate spheroidal wave functions, {F}ourier analysis and
uncertainty. {II}}}, Bell System Tech. J. {\bf{40}} (1961), 65--84.

%\bibitem{MOS} W. Magnus, F. Oberhettinger, and R. Soni, {\em{Formulas and theorems for the special functions of mathematical physics}}, 
%3rd ed., %Die Grundlehren der mathematischen Wissenschaften, Band 52. 
%Springer-Verlag New York, Inc., New York, 1966.

\bibitem{Makar-Limanov} L.~Makar-Limanov. {\em{Rings of differential operators on algebraic curves}}, 
Bull. London Math. Soc. {\bf{21}} (1989), 538--540.

\bibitem{Mehta} M.~L. Mehta, {\em Random matrices}, 3rd ed, Elsevier Publ., 2004.

\bibitem{Nakajima} H. Nakajima, {\em{Lectures on Hilbert schemes of points on surfaces}}, 
Univ. Lect. Ser. vol. 18, AMS, Providence, RI, 1999.

\bibitem{OP} A. Okounkov and R. Pandharipande, {\em{Virasoro constraints for target curves}},
Invent. Math. {\bf{163}} (2006), 47--108. 

\bibitem{Sato} M. Sato, {\em{The KP hierarchy and infinite dimensional Grassmann manifolds}}, Proc. Sympos. Pure Math. 49 Part 1. (pp. 51–66) Amer. Math. Soc., Providence, RI, 1989.

\bibitem{ORX} A.~Osipov, V.~Rokhlin, and H.~Xiao, {\em{Prolate spheroidal wave functions of order zero}}, Springer Ser. Appl. Math. Sci {\bf{187}}, 2013.

\bibitem{Shannon1} C.~E. Shannon, {\em{A mathematical theory of communication I}}, Bell System Tech. J. {\bf{27}} (1948), 379--423.

\bibitem{SW} G. Segal and G. Wilson, {\em{Loop groups and equations of KdV type}}, 
Publ. Math. Inst. Hautes \'Etudes Sci. {\bf{61}} (1985), 5--65.

%\bibitem{Shannon2} C.~E. Shannon, {\em{A mathematical theory of communication II}}, Bell System Tech. J. {\bf{28}} (1949), 656--715.

\bibitem{S} D.~Slepian, {\em{Prolate spheroidal wave functions, {F}ourier analysis and uncertainity. {IV}. {E}xtensions to many dimensions; generalized prolate
spheroidal functions}}, Bell System Tech. J. {\bf{43}} (1964), 3009--3057.

%\bibitem{Slepian65} D.~{Slepian}, {\em{Some asymptotic expansions for prolate spheroidal wave functions}}, J. Math. Phys., Mass. Inst. Techn. {\bf{44}} (1965), 99--140.

\bibitem{S66} L.~Schwartz, {\em{Theorie des distributions}}, Hermann, Paris, 1966.
 

\bibitem{SP} D.~Slepian and H.~O. Pollak, {\em{Prolate spheroidal wave functions, {F}ourier analysis and
uncertainty. {I}}},  Bell System Tech. J. {\bf{40}} (1961), 43--63.

\bibitem{Smith} S.~P. Smith and J.~T. Stafford, {\em{Differential operators on an affine curve}}, 
Proc. London Math. Soc. (3) {\bf{56}} (1988), 229--259.

\bibitem{TW1} C.~A. Tracy and H.~Widom, {\em{Fredholm determinants, differential equations and matrix models}}, 
Comm. Math. Phys. {\bf{163}} (1994), 33--72.

\bibitem{TW2} C.~A. Tracy and H.~Widom, {\em{Level-spacing distributions and the {A}iry kernel}}, 
Comm. Math. Phys. {\bf{159}} (1994), 151--174.

\bibitem{Wilson-93} G. Wilson, {\em{Bispectral commutative ordinary differential operators}}, J. Reine Angew. Math. {\bf{442}} (1993), 177--204.

\bibitem{Wilson-98} G. Wilson, {\em{Collisions of {C}alogero-{M}oser particles and an adelic Grassmannian}} (with an appendix by I. G. Macdonald),
Invent. Math. {\bf{133}} (1998) 1--41.

\bibitem{vanMoerbeke} P.~van Moerbeke, {\em{Integrable foundations of string theory}}, in: {\em
Lectures on integrable systems ({S}ophia-{A}ntipolis, 1991)}, O.~Babelon, P.~Cartier, and Y.~Kosmann-Schwarzbach, editors,
pp. 163--267, World Sci. Publ., River Edge, NJ, 1994.

\bibitem{Wit} E. Witten, {\em{Two-dimensional gravity and intersection theory on moduli space}}, 
Surveys in diff. geom. (Cambridge, MA, 1990), pp. 243--310, Lehigh Univ. 1991.

%\bibitem{BG} M. Bertero and F. A. Gr\"unbaum, {\em{Commuting differential operators for the finite Laplace transform}}, 
%Inverse Problems {\bf{1}} (1985), 181--192. 
% 
%\bibitem{BGR} M. Bertero, F. A. Gr\"unbaum, and L. Rebolia, {\em{Spectral properties of a differential operator related to the inversion 
%of the finite Laplace transform}}, Inverse Problems {\bf{2}} (1986), 131--139. 
% 
%\bibitem{DG} J. J. Duistermaat and F. SA. Gr\"unbaum, {\em{Differential equations in the spectral parameter}}, 
%Comm. Math. Phys. {\bf{103}} (1986), 177--240. 
%
%\bibitem{MOS} W. Magnus, F. Oberhettinger and R. Soni, {\em{Formulas and Theorems for the Special Functions of Mathematical Physics}}, 
%Springer Verlag, New York, 1966.
%
%\bibitem{BW2} Y. Berest and G. Wilson, {\em{Automorphisms and ideals of the Weyl algebra}},
%Mathematische Annalen. {\bf{318}} (2000) 127--147.
%
%\bibitem{BW1} Y. Berest and G. Wilson, {\em{Differential isomorphism and equivalence of algebraic varieties}},
%London Mathematical Society Lecture Note Series. {\bf{308}} (2004).



%%%%%%%%%%%%%%%%%%%%%%%%%%%%%%%%%%%%%%%%%%%%%%%%%%%%%%%%%%%%%%%%%%%%%%%%%%%%%%
\end{thebibliography}

\end{document}